\newtheorem{thm}{Theorem}[section]
\newtheorem{lem}[thm]{Lemma}
\newtheorem{rem}[thm]{Remark}
\newtheorem{assum}{Assumption}
\numberwithin{equation}{section}
\title{Petrov-Galerkin Method for Fully Distributed-Order \\ Fractional Partial Differential Equations 
	\thanks{This work was supported by the AFOSR Young Investigator Program (YIP) award (FA9550-17-1-0150) and by the MURI/ARO on Fractional PDEs for Conservation Laws and Beyond: Theory, Numerics and Applications (W911NF- 15-1-0562) and by USA National Science Foundation grants DMS-1462156 and EAR-1344280.}}
\author{
	Mehdi Samiee
	\footnote{D\lowercase{epartment of} C\lowercase{omputational} M\lowercase{athematics}, S\lowercase{cience}, \lowercase{and}, E\lowercase{ngineering} \& D\lowercase{epartment of} M\lowercase{echanical} E\lowercase{ngineering},	
		M\lowercase{ichigan} S\lowercase{tate} U\lowercase{niversity}, 428 S S\lowercase{haw} L\lowercase{ane}, E\lowercase{ast} L\lowercase{ansing}, MI 48824, USA}
	,Ehsan Kharazmi
	\footnote{D\lowercase{epartment of} C\lowercase{omputational} M\lowercase{athematics}, S\lowercase{cience}, \lowercase{and}, E\lowercase{ngineering} \& D\lowercase{epartment of} M\lowercase{echanical} E\lowercase{ngineering},	
		M\lowercase{ichigan} S\lowercase{tate} U\lowercase{niversity}, 428 S S\lowercase{haw} L\lowercase{ane}, E\lowercase{ast} L\lowercase{ansing}, MI 48824, USA}
	, Mohsen Zayernouri
	\footnote{D\lowercase{epartment of} C\lowercase{omputational} M\lowercase{athematics}, S\lowercase{cience}, \lowercase{and}, E\lowercase{ngineering} \&
		D\lowercase{epartment of} M\lowercase{echanical} E\lowercase{ngineering},	
		M\lowercase{ichigan} S\lowercase{tate} U\lowercase{niversity}, 428 S S\lowercase{haw} L\lowercase{ane}, E\lowercase{ast} L\lowercase{ansing}, MI 48824, USA,  C\lowercase{orresponding author; zayern@msu.edu}}
	,and Mark M Meerschaert
	\footnote{D\lowercase{epartment of} S\lowercase{tstistics} and P\lowercase{robability},	
		M\lowercase{ichigan} S\lowercase{tate} U\lowercase{niversity}, 619 R\lowercase{ed} C\lowercase{edar} R\lowercase{oad}, E\lowercase{ast} L\lowercase{ansing}, MI 48824, USA}
}
\begin{document}

\maketitle

\begin{abstract}
	Distributed-order PDEs are tractable mathematical models for complex multiscaling anomalous transport, where derivative orders are distributed over a range of values. We develop a fast and stable Petrov-Galerkin spectral method for such models by employing Jacobi \textit{poly-fractonomial}s and Legendre polynomials as temporal and spatial basis/test functions, respectively. By defining the proper underlying \textit{ distributed Sobolev} spaces and their equivalent norms, we prove the well-posedness of the weak formulation, and thereby carry out the corresponding stability and error analysis. We finally provide several numerical simulations to study the performance and convergence of proposed scheme.
\end{abstract}

\begin{keywords}
	\textit{Distributed Sobolev} space, well-posedness analysis, discrete \textit{inf-sup} condition, spectral convergence, Jacobi \textit{poly-fractonomials}, Legendre polynomials
\end{keywords}
\pagestyle{myheadings}
\thispagestyle{plain}

\section{Introduction}
\label{Sec: Intro}
%%%%%%%%%%%%%%%%%%%%%%%%%%%%%%%%%%%%
%

Over the past decades, anomalous transport has been observed and investigated in a wide range of applications such as turbulence \cite{shraiman2000scalar,metzler2014anomalous,iwayama2015anomalous,cheng2015eulerian}, porous media \cite{tyukhova2016mechanisms,ardakani2016investigation,zhang2016backward,edery2015anomalous,zhang2015modeling,Benson2000}, geoscience \cite{armour2016southern}, bioscience \cite{naghibolhosseini2015estimation,naghibolhosseini2017fractional,perdikaris2014fractional,  regner2014randomness}, and viscoelastic material \cite{suzuki2016fractional,goychuk2015anomalous,mashelkar1980anomalous}. The underlying anomalous features, manifesting in memory-effects, non-local interactions, power-law distributions, sharp peaks, and self-similar structures, can be well-described by fractional partial differential equations (FPDEs) \cite{meerschaert2012fractional, meerschaert2012stochastic,Klages2008, metzler2000random}. % move some of the related references here.
However, in cases where a single power-law scaling is not observed over the whole domain, the processes cannot be characterized by a fixed fractional order \cite{sokolov2004distributed}. Examples include accelerating superdiffusion, decelerating subdiffusion \cite{gorenflo2015time,sokolov2004distributed}, and random processes subordinated to Wiener processes \cite{duan2017steady,konjik2017distributed,meerschaert2012stochastic,eab2011fractional,mainardi2008time,mainardi2007two,chechkin2002retarding}. A faithful description of such anomalous transport requires exploiting distributed-order derivatives, in which the derivative order has a distribution over a range of values. 

%We generalize the existing FPDEs in \cite{samiee2017Unified,kharazmi2016petrov} by letting the derivative orders take a distribution over a range of values.

Numerical methods for FPDEs, which can exhibit history dependence and non-local features have been recently addressed by developing finite-element methods \cite{jin2017nonnegativity,ainsworth2017aspects}, spectral/spectral-element methods \cite{yamamoto2018weak,chen2017laguerre,mao2016efficient,samiee2017Unified,mao2017spectral,kharazmi2017petrov}, and also finite-difference and finite-volume methods \cite{coronel2018numerical,macias2018explicit,ammi2018finite}. Distributed-order FPDEs impose further complications in numerical analysis by introducing distribution functions, which require compliant underlying function spaces, as well as efficient and accurate integration techniques over the order of the fractional derivatives. In \cite{zaky2018legendre,Li2018Two,fan2018numerical,tian2015class,luchko2009boundary,jin2016error}, numerical analysis of distributed-order FPDEs was extensively investigated. More recently, Liao et al. \cite{liao2017stability} studied simulation of a distributed subdiffusion equation, approximating the distributed order Caputo derivative using piecewise-linear and quadratic interpolating polynomials. Abbaszadeh and Dehghan \cite{abbaszadeh2017improved} employed an alternating direction implicit approach, combined with an interpolating element-free Galerkin method, on distributed-order time-fractional diffusion-wave equations. Kharazmi and Zayernouri  \cite{kharazmi2017pseudo-spectral} developed a pseudo-spectral method of Petrov-Galerkin sense, employing nodal expansions in the weak formulation of distributed-order fractional PDEs. In \cite{kharazmi2016petrov}, they also introduced \textit{distributed Sobolev} space and developed two spectrally accurate schemes, namely, a Petrov--Galerkin spectral method and a 
spectral collocation method for distributed order fractional differential equations. Besides, Tomovski and Sandev \cite{tomovski2017distributed} investigated the solution of generalized distributed-order diffusion equations with fractional time-derivative, using the Fourier-Laplace transform method. 

The main purpose of this study is to develop and analyze a Petrov-Galerkin (PG) spectral method to solve a $(1+d)$-dimensional fully distributed-order FPDE with two-sided derivatives of the form 
\begin{align}
\label{Strong-form}
\nonumber
&
\int_{\tau^{min}}^{\tau^{max}} \varphi(\tau) \,\prescript{C}{0}{\mathcal{D}}_{t}^{2\tau} u \,\,  d\tau
+  
\sum_{i=1}^{d}  \int_{\mu_i^{min}}^{\mu_i^{max}} \varrho_i(\mu_i) \,
[c_{l_i}\prescript{RL}{a_i}{\mathcal{D}}_{x_i}^{2\mu_i} u^{} +c_{r_i}\prescript{RL}{x_i}{\mathcal{D}}_{b_i}^{2\mu_i} u^{} ] \,\,  d\mu_i
\\
& =  
\sum_{j=1}^{d} \int_{\nu_j^{min}}^{\nu_j^{max}} \rho_j(\nu_j) \,
[\kappa_{l_j}\prescript{RL}{a_j}{\mathcal{D}}_{x_j}^{2\nu_j} u^{} +\kappa_{r_j}\prescript{RL}{x_j}{\mathcal{D}}_{b_j}^{2\nu_j} u^{} ] \,\, d\nu_j
% \nonumber
-\gamma\,\, u^{}  +
f ,
\end{align}
%
%subject to homogeneous Dirichlet boundary conditions and zero initial condition, where $t\in [0,\, T]$, $x_j\in [a_j,\, b_j]$, $2\tau^{min}<2\tau^{max} \, \in (0, \, 2]$, $2\tau^{min}\neq 1, \,\,2\tau^{max} \neq 1$, $2\mu_i^{min}<2\mu_i^{max} \, \in (0, \, 1]$, and $2\nu_j^{min}<2\nu_j^{max} \, \in (1, \, 2]$ for $i,j=1, \, 2, \, ..., \, d$. The coefficients $ c_{l_i} $, $ c_{r_i} $, $ \kappa_{l_i} $, $ \kappa_{r_i} $, and $\gamma$ are considered to be constant. Besides, the distribution function $0<\varphi(\tau)\in L^1([\tau^{min},\tau^{max}])$ , $0<\varrho_i(\mu_i)\in L^1([\mu_i^{min},\mu_i^{max}])$, and $0<\rho_j(\nu_j)\in L^1([\nu_j^{min},\nu_j^{max}])$.
subject to homogeneous Dirichlet boundary conditions and zero initial condition, where for $i,j=1, \, 2, \, ..., \, d$
\vspace{-0.1 in}
\begin{align*}
& t\in [0,\, T], \quad  x_j\in [a_j,\, b_j], 
\\
& 2\tau^{min}<2\tau^{max} \, \in (0, \, 2], \,\, 2\tau^{min}\neq 1, \,\,2\tau^{max} \neq 1, 
\\
& 2\mu_i^{min}<2\mu_i^{max} \, \in (0, \, 1], \,\, 2\nu_j^{min}<2\nu_j^{max} \, \in (1, \, 2],
\\
& 0<\varphi(\tau)\in L^1((\tau^{min},\tau^{max})), \,\, 
0<\varrho_i(\mu_i)\in L^1((\mu_i^{min},\mu_i^{max})), \,\, 
0<\rho_j(\nu_j)\in L^1((\nu_j^{min},\nu_j^{max})),
\end{align*}
and the coefficients $ c_{l_i} $, $ c_{r_i} $, $ \kappa_{l_i} $, $ \kappa_{r_i} $, and $\gamma$ are constant. We briefly highlight the main contributions of this study as follows.
\begin{itemize}
	
	\item We consider fully distributed fractional PDEs as an extension of the existing fractional PDEs in \cite{samiee2017Unified,kharazmi2016petrov} by replacing the fractional operators by their corresponding distributed order ones. We further derive the weak formulation of the problem. 
	
	\item We construct the underlying function spaces by extending the \textit{distributed Sobolev} space in \cite{kharazmi2016petrov} to higher dimensions in time and space, endowed with equivalent associated norms. 
	
	\item We develop a Petrov-Galerkin spectral method, employing Legendre polynomials and Jacobi \textit{poly-fractonomials} \cite{zayernouri2013fractional} as spatial and temporal basis/test functions, respectively. We also formulate a fast solver for the corresponding weak form of \eqref{Strong-form}, following \cite{samiee2017Unified}, which significantly reduces the computational expenses in high-dimensional problems.
	
	\item We establish well-posedness of the weak form of the problem in the underlying \textit{distributed Sobolev} spaces  respecting the analysis in \cite{samiee2017unified1} and prove the stability of the proposed numerical scheme. We additionally perform the corresponding error analysis, where the  \textit{distributed Sobolev} spaces enable us to obtain accurate error estimate of the scheme. 	
\end{itemize}

\noindent We note that the model \eqref{Strong-form} includes distributed-order fractional diffusion and fractional advection-dispersion equations (FADEs) with constant coefficients on bounded domains, when the corresponding distributions $\varphi$, $\varrho_i$, and $\varrho_j$, $i,j=1,2,\cdots,d$ are chosen to be Dirac delta functions. To examine the performance and convergence of the developed PG method in solving different cases, we also perform several numerical simulations.

The paper is organized as follows: in Section 2, we introduce some preliminaries from fractional calculus. In Section 3, we present the mathematical framework of the bilinear form and carry out the corresponding well-posedness analysis. We construct the PG method for the discrete weak form problem and formulate the fast solver in Section 4. In Section 5, we perform the stability and error analysis in detail. In Section 6, we illustrate the convergence rate and the efficiency of method via numerical examples. We conclude the paper with a summary.

%%%%%%%%%%%%%%%%%%%%%%%%%%%%%%%%%%
%
\section{Preliminaries on Fractional Calculus}
\label{Sec: Notation}
%
%%%%%%%%%%%%%%%%%%%%%%%%%%%%%%%%%%
%
Recalling the definitions of the fractional derivatives and integrals from \cite{zayernouri2013fractional,meerschaert2012stochastic}, we denote by $\prescript{RL}{a}{\mathcal{D}}_{x}^{\sigma} g(x)$ and $\prescript{RL}{x}{\mathcal{D}}_{b}^{\sigma} g(x)$ the left-sided and the right-sided Reimann-Liouville fractional derivatives of order $\sigma>0$,
\begin{eqnarray}
\label{eqRL}
\prescript{RL}{a}{\mathcal{D}}_{x}^{\sigma} g(x) = \frac{1}{\Gamma(n-\sigma)}  \frac{d^{n}}{d x^n} \int_{a}^{x} \frac{g(s) }{(x - s)^{\sigma +1-n} }\,\,ds,\quad x \in [a,b],
\\
\label{eqRL2}
\prescript{RL}{x}{\mathcal{D}}_{b}^{\sigma} g(x) = \frac{(-1)^{n}}{\Gamma(n-\sigma)}   \frac{d^{n}}{d x^n} \int_{x}^{b} \frac{g(s) }{(s - x)^{\sigma +1-n} }\,\,ds,\quad x \in [a,b],
\end{eqnarray}
in which $g(x) \in L^1[a,b]$ and $\int_{a}^{x} \frac{g(s) }{(x - s)^{\sigma +1-n} }\,\,ds, \, \, \int_{a}^{x} \frac{g(s) }{(x - s)^{\sigma +1-n} }\,\,ds  \in C^{n}[a,b] $, respectively, where $n= \lceil \sigma \rceil$. Besides, $\prescript{C}{a}{\mathcal{D}}_{x}^{\sigma} g(x)$ and $\prescript{C}{x}{\mathcal{D}}_{b}^{\sigma} g(x)$ represent the left-sided and the right-sided Caputo fractional derivatives, where
\begin{eqnarray}
\label{eq6}
\prescript{C}{a}{\mathcal{D}}_{x}^{\sigma} f(x) \,=\, 
\frac{1}{\Gamma(n-\nu)} \int_{a}^{x} \frac{g^{(n)}(s) }{(x - s)^{\nu +1-n} }\,\,ds,\quad x \in [a,b], 
\\
\label{eq7}
\prescript{C}{x}{\mathcal{D}}_{b}^{\sigma} f(x) \,=\, 
\frac{(-1)^n}{\Gamma(n-\nu)} \int_{x}^{b} \frac{g^{(n)}(s) }{(s - x)^{\nu +1-n} }\,\,ds,\quad x \in [a,b].
\end{eqnarray}
The relationship between the RL and the Caputo fractional derivatives is given by
\begin{eqnarray}
\label{eq4}
\prescript{Rl}{a}{\mathcal{D}}_{x}^{\nu} f(x) \,=\, 
\frac{f ( a ) }{\Gamma(1-\nu)(x-a)^{\nu}}\,+\,
\prescript{C}{a}{\mathcal{D}}_{x}^{\nu} f(x) 
\\
\label{eq5}
\prescript{Rl}{x}{\mathcal{D}}_{b}^{\nu} f(x) \,=\, 
\frac{f ( b ) }{\Gamma(1-\nu)(b-x)^{\nu}}\,+\,
\prescript{C}{x}{\mathcal{D}}_{b}^{\nu} f(x),
\end{eqnarray}
when $\lceil \nu \rceil=1$, see e.g. (2.33) in \cite{meerschaert2012stochastic}. In the case of homogeneous boundary conditions, we obtain $\prescript{Rl}{a}{\mathcal{D}}_{x}^{\nu} f(x)=\prescript{C}{a}{\mathcal{D}}_{x}^{\nu} f(x):=\prescript{}{a}{\mathcal{D}}_{x}^{\nu} f(x)$ and $\prescript{Rl}{x}{\mathcal{D}}_{b}^{\nu} f(x)= \prescript{C}{x}{\mathcal{D}}_{b}^{\nu} f(x):=\prescript{}{x}{\mathcal{D}}_{b}^{\nu} f(x)$. The Reimann-Liouville fractional integrals of Jacobi \textit{poly-fractonomials} are analytically obtained in the standard domain $\xi \in [-1,1]$ as  \cite{zayernouri2013fractional,zayernouri2015unified}
\begin{eqnarray}
\label{6}
\prescript{RL}{-1}{\mathcal{I}}_{\xi}^{\sigma} \{ (1+\xi)^{\beta} P_{n}^{\alpha, \beta} {(\xi)}\} = \frac{\Gamma(n+\beta+1)}{\Gamma(n+\beta+\sigma+1)}\, (1+\xi)^{\beta+\sigma} P_{n}^{\alpha-\sigma,\beta+\sigma}{(\xi)},
\end{eqnarray}
and
\begin{eqnarray}
\label{7}
\prescript{RL}{\xi}{\mathcal{I}}_{1}^{\sigma} \{ (1-\xi)^{\alpha} P_{n}^{\alpha, \beta} {(\xi)}\} = \frac{\Gamma(n+\alpha+1)}{\Gamma(n+\alpha+\sigma+1)}\, (1-\xi)^{\alpha+\sigma} P_{n}^{\alpha+\sigma,\beta-\sigma}{(\xi)},
\end{eqnarray}
where $0 < \sigma < 1$, $\alpha > -1$, $\beta > -1$, and $P^{\alpha, \, \beta}_{n} (\xi)$ denotes the standard Jacobi polynomials of order $n$ and parameters $\alpha$ and $\beta$ \cite{chen2016gen}. Accordingly, 
\begin{eqnarray}
\label{Eq: 10}
\prescript{RL}{-1}{\mathcal{D}}_{\xi}^{\sigma} P_{n} (\xi)= \frac{\Gamma(n+1)}{\Gamma(n-\sigma+1)}P_{n}^{\, \sigma,-\sigma}{(\xi)}\,(1+\xi)^{-\sigma}
\end{eqnarray}
and
\begin{eqnarray}
\label{Eq: 11}
\prescript{RL}{\xi}{\mathcal{D}}_{1}^{\sigma} P_{n} (\xi)= \frac{\Gamma(n+1)}{\Gamma(n-\sigma+1)}P_{n}^{\, -\sigma,\sigma}{(\xi)}\,(1-\xi)^{-\sigma}, \,
\end{eqnarray}
where $P_{n} (\xi) := P^{\, 0,0}_{n} (\xi)$ represents Legendre polynomial of degree $n$ (see \cite{chen2016gen}). 

Let define the distributed-order derivative as
\begin{equation}
\prescript{D}{}{\mathcal{D}}_{t}^{\phi} f (t,x) :=  \int_{\tau^{min}}^{\tau^{max}} \phi (\tau) \prescript{}{0}{\mathcal{D}}_{t}^{\tau} f (t,x) d\tau,
\end{equation}
where $\alpha \rightarrow \phi(\alpha)$ be a continuous mapping in $[\alpha_{min},\alpha_{max}]$ \cite{kharazmi2016petrov} and $t>0$. 
We note that by choosing the distribution function in the distributed-order derivatives to be the Dirac delta function $\delta(\tau-\tau_0)$, we recover a single (fixed) term fractional derivative, i.e., 
\begin{equation}
\int_{\tau^{min}}^{\tau^{max}} \delta (\tau-\tau_0) \prescript{}{0}{\mathcal{D}}_{t}^{\tau} f (t,x) d\tau = \prescript{}{0}{\mathcal{D}}_{t}^{\tau_0} f (t,x),
\end{equation}
where $\tau_0 \in (\tau_{min},\tau_{max})$.

%
%%%%%%%%%%%%%%%%%%%%%%%%%%%%%%%%%%
\section{Mathematical Formulation}
\label{Sec: General FPDE}
%%%%%%%%%%%%%%%%%%%%%%%%%%%%%%%%%%%%%%%%%%%%
%
We introduce the underlying solution and test spaces along with their proper norms, and also provide some useful lemmas to derive the corresponding bilinear form and thus, prove the well-posedness of the problem.

%%%%%%%%%%%%%%%%%%%%%%%%%%%%%%%%%%
\subsection{\textbf{Mathematical Framework}}
%%%%%%%%%%%%%%%%%%%%%%%%%%%%%%%%%%
%
Recalling the definition of Sobolev space for real $s\geq 0$ from \cite{kharazmi2016petrov,li2009space}, the usual Sobolev space, denoted by $H^s(I)$ on the finite interval $I=(0,T)$, is associated with the norm $\Vert \cdot \Vert_{H^s(I)}$. According to \cite{li2009space,ervin2007variational},  
\begin{equation}
\label{equivalent}
\vert \cdot \vert_{H^{s}(I)} \cong \vert \cdot \vert_{{^l}H^{s}(I)} 
\cong \vert \cdot \vert_{{^r}H^{s}(I)} 
%\cong \vert \cdot \vert_{{}H^{s}(I)}^*,
\end{equation}
where $"\cong"$ denotes equivalence relation, $ \vert \cdot \vert_{{^l}H^{s}(I)} = \Vert \prescript{}{0}{\mathcal{D}}_{t}^{s} (\cdot)\Vert_{L^2(I)}$, and $ \vert \cdot \vert_{{^r}H^{s}(I)} = \Vert \prescript{}{t}{\mathcal{D}}_{T}^{s} (\cdot) \Vert_{L^2(I)}$. Take $\Lambda = (a,b)$. For the real index $\sigma \geq 0$ and $\sigma \neq n-\frac{1}{2}$ on the bounded interval $\Lambda$ the following norms are equivalent \cite{li2010existence}
\begin{equation}
\label{eq14}
\Vert \cdot \Vert_{H^{\sigma}_{}(\Lambda)} \cong \Vert \cdot \Vert_{{^l}H^{\sigma}_{}(\Lambda)} \cong \Vert \cdot \Vert_{{^r}H^{\sigma}_{}(\Lambda)}\cong \vert \cdot \vert_{{}H^{\sigma}(\Lambda)}^*,
\end{equation}
where 
$\Vert \cdot \Vert_{{^l}H^{\sigma}_{}(\Lambda)} = \Big(\Vert \prescript{}{a}{\mathcal{D}}_{x}^{\sigma}\, (\cdot)\Vert_{L^2(\Lambda)}^2+\Vert \cdot \Vert_{L^2(\Lambda)}^2 \Big)^{\frac{1}{2}}$, 
$\Vert \cdot \Vert_{{^r}H^{\sigma}_{}(\Lambda)} = \Big(\Vert \prescript{}{x}{\mathcal{D}}_{b}^{\sigma}\, (\cdot)\Vert_{L^2(\Lambda)}^2+\Vert \cdot \Vert_{L^2(\Lambda)}^2 \Big)^{\frac{1}{2}},$
and $ \vert \cdot \vert_{{^l}H^{\sigma}(I)}^* = \vert (\prescript{}{0}{\mathcal{D}}_{t}^{\sigma}(\cdot),\prescript{}{t}{\mathcal{D}}_{T}^{\sigma}(\cdot))_{I} \vert^{\frac{1}{2}}$.
From Lemma 5.2 in \cite{ervin2007variational}, we have
\begin{equation}
\label{equiv_time_1}
\vert \cdot \vert_{{^l}H^{\sigma}(I)}^*\cong \vert \cdot \vert_{{^l}H^{\sigma}(I)}^{\frac{1}{2}} \, \vert \cdot \vert_{{^r}H^{\sigma}(I)}^{\frac{1}{2}}
= \Vert \prescript{}{0}{\mathcal{D}}_{t}^{\sigma} (\cdot)\Vert_{L^2(I)}^{\frac{1}{2}}\, \Vert \prescript{}{t}{\mathcal{D}}_{T}^{\sigma} (\cdot)\Vert_{L^2(I)}^{\frac{1}{2}}.
\end{equation}
Let $C_{0}^{\infty}(\Lambda)$ represent the space of smooth functions with compact support in $\Lambda$. According to Lemma 3.1 in \cite{samiee2017unified1}, the norms $\Vert \cdot \Vert_{{^l}H^{\sigma}_{}(\Lambda)}$ and $\Vert \cdot \Vert_{{^r}H^{\sigma}_{}(\Lambda)}$ are equivalent to $\Vert \cdot \Vert_{{^c}H^{\sigma}_{}(\Lambda)}$ in space $C^{\infty}_{0}(\Lambda)$, where
\begin{equation}
\Vert \cdot \Vert_{{^c}H^{\sigma}_{}(\Lambda)} = \Big(\Vert \prescript{}{x}{\mathcal{D}}_{b}^{\sigma}\, (\cdot)\Vert_{L^2(\Lambda)}^2+\Vert \prescript{}{a}{\mathcal{D}}_{x}^{\sigma}\, (\cdot)\Vert_{L^2(\Lambda)}^2+\Vert \cdot \Vert_{L^2(\Lambda)}^2 \Big)^{\frac{1}{2}}.
\end{equation}
In the usual Sobolev space, for $u \in {}H^{\sigma}_{}(\Lambda)$ we define
$$\vert u \vert_{{}H^{\sigma}_{}(\Lambda)}^{*}=\vert (\prescript{}{a}{\mathcal{D}}_{x}^{\sigma}\, u, \prescript{}{x}{\mathcal{D}}_{b}^{\sigma}\, v )_{\Lambda}\vert^{\frac{1}{2}}+\vert (\prescript{}{x}{\mathcal{D}}_{b}^{\sigma}\, u, \prescript{}{a}{\mathcal{D}}_{x}^{\sigma}\, v )_{\Lambda}\vert^{\frac{1}{2}}, \quad \forall v \in {}H^{\sigma}_{}(\Lambda),$$
assuming $\underset{u \in H^{\sigma}_{}(\Lambda)}{\sup} \vert (\prescript{}{a}{\mathcal{D}}_{x}^{\sigma}\, u, \prescript{}{x}{\mathcal{D}}_{b}^{\sigma}\, v )_{\Lambda}\vert^{\frac{1}{2}}+\vert (\prescript{}{x}{\mathcal{D}}_{b}^{\sigma}\, u, \prescript{}{a}{\mathcal{D}}_{x}^{\sigma}\, v )_{\Lambda}\vert^{\frac{1}{2}} >0 \quad \forall v \in  H^{\sigma}_{}(\Lambda)$.  Denoted by ${^l}H^{\sigma}_{0}(\Lambda)$ and ${^r}H^{\sigma}_{0}(\Lambda)$ are the closure of $C^{\infty}_0(\Lambda)$ with respect to the norms $\Vert \cdot \Vert_{{^l}H^{\sigma}(\Lambda)}$ and $\Vert \cdot \Vert_{{^r}H^{\sigma} (\Lambda)}$in $\Lambda$, respectively, where $C^{\infty}_0(\Lambda)$ is the space of smooth functions with compact support in $\Lambda$.

%\begin{lem}
%	\label{lemaa32}
%	For $\sigma \geq 0$ and $\sigma \neq n-\frac{1}{2}$, ${^l}H^{\sigma}_{0}(\Lambda)$, ${^r}H^{\sigma}_{0}(\Lambda)$, and ${^c}H^{\sigma}_{0}(\Lambda)$ are equal and their seminorms are equivalent to $\vert \cdot \vert_{{}H^{\sigma}_{}(\Lambda)}^{*}$, where ${^l}H^{\sigma}_{0}(\Lambda)$, ${^r}H^{\sigma}_{0}(\Lambda)$, and ${^c}H^{\sigma}_{0}(\Lambda)$ denotes the closure of $C^{\infty}_{0}(\Lambda)$ with compact support on $\Lambda$ with respect to the norms $\Vert \cdot \Vert_{{^l}H^{\sigma}_{}(\Lambda)}$ and $\Vert \cdot \Vert_{{^r}H^{\sigma}_{}(\Lambda)}$.
%\end{lem}
%\begin{proof}
%See \cite{samiee2017unified1}.
%\end{proof}
\vspace{0.1cm}

Recalling from \cite{kharazmi2016petrov}, ${^\mathfrak{D}}H^{\varphi}(\mathbb{R})$ represents the \textit{distributed Sobolev} space on $\mathbb{R}$ , which is associated with the following norm 
\begin{align}
\label{Eq: Norm dis Frac Sobolev on R}
\Vert \cdot \Vert_{{^\mathfrak{D}}H^{\varphi}(\mathbb{R})}  =  
\left(
\int_{\tau^{min}}^{\tau^{max}} \varphi(\tau) \,\, \Vert \, (1+\vert \omega \vert^2 )^{\frac{\tau}{2}}\mathcal{F}(\cdot)(\omega) \Vert^2_{L^2(\mathbb{R})} \,\, d\tau
\right)^{\frac{1}{2}},
\end{align}
where $0<\varphi(\tau) \in L^1(\,[\tau^{min},\tau^{max}]\,)$, $0 \leq \tau^{min} < \tau^{max} $.
Subsequently, we denote by ${^\mathfrak{D}}H^{\varphi}(I)$ the \textit{distributed Sobolev} space on the bounded open interval $I=(0,T)$, which is defined as 
$
{^\mathfrak{D}}H^{\varphi}(I) = \{ v \in L^2(I) \vert \,\,\, \exists \tilde{v} \in {^\mathfrak{D}}H^{\varphi}(\mathbb{R})\,\,\, s.t.\,\,\, \tilde{v}|_{I} = v \}$
%
%$\varphi \in L^1(\,[\alpha_{1},\alpha_{2}]\,), \, 0 \leq \alpha_{1} < \alpha_{2} \}$ is a non-negative finite functio
%
with the the equivalent norms $\Vert \cdot \Vert_{{^{l,\mathfrak{D}}}H^{\varphi}(I)}$ and $\Vert \cdot \Vert_{{^{r,\mathfrak{D}}}H^{\varphi}(I)}$ in \cite{kharazmi2016petrov}, where
$$
\label{Eq: LeftEquivallent dis. NormFrac Sobolev on Interval}
\Vert \cdot \Vert_{{^{l,\mathfrak{D}}}H^{\varphi}(I)}  =  
\left(
\Vert \cdot \Vert^2_{L^2(I)} + 
\int_{\tau^{min}}^{\tau^{max}} \varphi(\tau) \,\, \Vert \, \prescript{}{0}{\mathcal{D}}_{t}^{\tau}(\cdot)  \Vert^2_{L^2(I)} \,\, d\tau 
\right)^{\frac{1}{2}}, $$ and
$$ 
\Vert \cdot \Vert_{^{r,\mathfrak{D}}H^{\varphi}(I)}  = \left(
\Vert \cdot \Vert^2_{L^2(I)} + \int_{\tau^{min}}^{\tau^{max}} \varphi(\tau) \,\, \Vert \, \prescript{}{t}{\mathcal{D}}_{T}^{\tau}(\cdot)  \Vert^2_{L^2(I)} \,\, d\tau 
\right)^{\frac{1}{2}}.$$

In each realization of a physical process (e.g., sub- or super-diffusion) the distribution function $\varphi(\tau)$ can be obtained from experimental observations, while the theoretical setting of the problem remains invariant. More importantly, choice of  \textit{distributed Sobolev} space and the associated norms provide a sharper estimate for the accuracy of the proposed PG method.

Let $\Lambda_1 = (a_1,b_1)$, $\Lambda_i = (a_i,b_i) \times \Lambda_{i-1}$ for $i=2,\cdots,d$. We define $\mathcal{X}_1 = {^{\mathfrak{D}}}H^{\rho_1}(\Lambda_1)$ with the associated norm $ \Vert \cdot \Vert_{{^{\mathfrak{D}}}H^{\rho_1}_{}(\Lambda_1)}$, where 
\begin{equation}
\label{equivNormDis}
\Vert \cdot \Vert_{{^{\mathfrak{D}}}H^{\rho_1}_{}(\Lambda_1)}=\left(
\Vert \cdot \Vert^2_{L^2(I)} + 
\int_{\nu^{min}_1}^{\nu^{max}_1} \rho_1(\nu_1) \,\left( \Vert \, \prescript{}{a_1}{\mathcal{D}}_{x_1}^{\nu_1}(\cdot)  \Vert^2_{L^2(\Lambda_1)} +\Vert \prescript{}{x_1}{\mathcal{D}}_{b_1}^{\nu_1}(\cdot)  \Vert^2_{L^2(\Lambda_1)}\right) \, d\nu_1 
\right)^{\frac{1}{2}}.
\end{equation}
%Note, ${}H^{\nu_1}_{0}(\Lambda_1)$ denotes the closure of $C^{\infty}_{0}(\Lambda_1)$, which has the compact support in $\Lambda_1$, with respect to the norm $\Vert \cdot \Vert_{{^c}H^{\sigma}_{}(\Lambda_1)}$. 
Subsequently, we construct $\mathcal{X}_d$ such that
\begin{eqnarray}
\mathcal{X}_2 &=& {^{\mathfrak{D}}}H^{\rho_2} \Big((a_2,b_2); L^2(\Lambda_1) \Big) \cap L^2((a_2,b_2); \mathcal{X}_1),
\nonumber
\\
&\vdots&
\nonumber
\\
\mathcal{X}_d &=& {^{\mathfrak{D}}}H^{\rho_d} \Big((a_d,b_d); L^2(\Lambda_{d-1}) \Big) \cap L^2((a_d,b_d); \mathcal{X}_{d-1}),
\end{eqnarray}
associated with the norm
\begin{equation}
\label{norm_Xd}
\Vert \cdot \Vert_{\mathcal{X}_d} = \bigg{\{} \Vert \cdot \Vert_{{^{\mathfrak{D}}}H^{\rho_d} \Big((a_d,b_d); L^2(\Lambda_{d-1}) \Big)}^2 + \Vert \cdot \Vert_{ L^2\Big((a_d,b_d); \mathcal{X}_{d-1}\Big)}^2 \bigg{\}}^{\frac{1}{2}}.
\end{equation}
\begin{lem}
	\label{norm_221}
	Let $\nu_i > 0$ and $\nu_i \neq n-\frac{1}{2}$ for $i=1,\cdots,d$. Then
	\begin{equation}
	\label{norm_Xd_2}
	\Vert \cdot \Vert_{\mathcal{X}_d} \cong \bigg{\{}  \sum_{i=1}^{d}  \int_{\nu_i^{min}}^{\nu_i^{max}} \rho_i(\nu_i) \, \Big( \Vert \prescript{}{x_i}{\mathcal{D}}_{b_i}^{\nu_i}\, (\cdot)\Vert_{L^2(\Lambda_d)}^2+\Vert \prescript{}{a_i}{\mathcal{D}}_{x_i}^{\nu_i}\, (\cdot)\Vert_{L^2(\Lambda_d)}^2 \Big)    d\nu_i  + \Vert  \cdot \Vert_{L^2(\Lambda_d)}^2 \bigg{\}}^{\frac{1}{2}}.
	\end{equation}
\end{lem}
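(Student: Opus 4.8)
The plan is to argue by induction on the spatial dimension $d$, peeling off one coordinate direction at a time and reducing the claimed equivalence to the one-dimensional distributed-norm equivalences already recorded above. For the base case $d=1$ there is essentially nothing to prove: the right-hand side of \eqref{norm_Xd_2} with $d=1$ is, term by term, the definition of $\Vert\cdot\Vert_{{^{\mathfrak{D}}}H^{\rho_1}(\Lambda_1)}$ in \eqref{equivNormDis} (up to the ordering of the left- and right-sided contributions), so the two norms coincide.

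For the inductive step I assume \eqref{norm_Xd_2} holds on $\Lambda_{d-1}$ and expand the squared norm \eqref{norm_Xd} into its two summands. The first summand is the distributed Sobolev norm in the $x_d$-direction of an $L^2(\Lambda_{d-1})$-valued function; since $L^2\big((a_d,b_d);L^2(\Lambda_{d-1})\big)=L^2(\Lambda_d)$, I rewrite it by means of the left-sided distributed norm and then invoke the left/right equivalence of \cite{kharazmi2016petrov} together with the combined two-sided equivalence, i.e. the distributed-weight counterpart of \eqref{eq14} and of Lemma 3.1 in \cite{samiee2017unified1}. The latter is obtained by integrating the pointwise-in-$\nu_d$ equivalences against the weight $\rho_d$, which is legitimate because the hypothesis $\nu_d\neq n-\frac12$ keeps the equivalence constants bounded on the compact order-interval $[\nu_d^{min},\nu_d^{max}]$. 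This converts the first summand into the $i=d$ term of \eqref{norm_Xd_2} plus one copy of $\Vert\cdot\Vert_{L^2(\Lambda_d)}^2$.

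For the second summand I write $\Vert u\Vert_{L^2((a_d,b_d);\mathcal{X}_{d-1})}^2=\int_{a_d}^{b_d}\Vert u(x_d,\cdot)\Vert_{\mathcal{X}_{d-1}}^2\,dx_d$ and apply the inductive hypothesis to each slice $u(x_d,\cdot)$. The equivalence constants there are independent of $x_d$, since they stem only from the one-dimensional equivalences and the conditions $\nu_i\neq n-\frac12$, so the pointwise two-sided inequalities survive integration in $x_d$. Using Tonelli together with the fact that, for $i\le d-1$, the partial operators $\prescript{}{a_i}{\mathcal{D}}_{x_i}^{\nu_i}$ and $\prescript{}{x_i}{\mathcal{D}}_{b_i}^{\nu_i}$ act only on the $\Lambda_{d-1}$ variables and hence commute with integration over $x_d$, I identify $\int_{a_d}^{b_d}\Vert\prescript{}{a_i}{\mathcal{D}}_{x_i}^{\nu_i}u(x_d,\cdot)\Vert_{L^2(\Lambda_{d-1})}^2\,dx_d=\Vert\prescript{}{a_i}{\mathcal{D}}_{x_i}^{\nu_i}u\Vert_{L^2(\Lambda_d)}^2$, and similarly for the right-sided operator and for the $L^2$ term. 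Thus the second summand is equivalent to the $i=1,\dots,d-1$ terms of \eqref{norm_Xd_2} plus a second copy of $\Vert\cdot\Vert_{L^2(\Lambda_d)}^2$. Adding the two summands reproduces the full sum over $i=1,\dots,d$ together with $2\Vert\cdot\Vert_{L^2(\Lambda_d)}^2$, and the harmless factor $2$ is absorbed into the equivalence constants, which yields \eqref{norm_Xd_2}.

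I expect the only genuinely delicate point to be this Tonelli/commutation step: one must verify that the partial fractional derivative of $u$ on $\Lambda_d$ restricts, for almost every $x_d$, to the fractional derivative of the slice $u(x_d,\cdot)$ on $\Lambda_{d-1}$, so that the slice-wise inductive hypothesis and the integration in $x_d$ can be combined, and that the constants supplied by the inductive hypothesis are uniform in $x_d$. Everything else is bookkeeping, provided the one-dimensional distributed equivalences from \cite{kharazmi2016petrov} and \cite{samiee2017unified1} are applied with $x_d$-independent constants on the compact order-intervals.
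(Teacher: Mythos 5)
Your argument is correct and follows essentially the same route as the paper's proof: induction on $d$, splitting $\Vert\cdot\Vert_{\mathcal{X}_d}^2$ into the $\prescript{\mathfrak{D}}{}{H}^{\rho_d}\big((a_d,b_d);L^2(\Lambda_{d-1})\big)$ summand and the $L^2\big((a_d,b_d);\mathcal{X}_{d-1}\big)$ summand, and using Fubini--Tonelli to reassemble the slice-wise terms into norms over $\Lambda_d$ (the paper carries out the $d=2$ case explicitly and then the general inductive step in its displays \eqref{11141}--\eqref{11142}). The only cosmetic difference is that the paper takes the first summand to be two-sided by definition of the norm \eqref{equivNormDis} rather than passing through the left/right equivalence, and it silently absorbs the duplicated $L^2$ term and the $x_d$-uniformity of constants that you rightly flag as the points needing care.
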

\begin{proof}
	Considering \eqref{equivNormDis}, $\mathcal{X}_1$ is endowed with $\Vert \cdot \Vert_{\mathcal{X}_1}\cong \Vert \cdot \Vert_{{^{\mathfrak{D}}}H^{\rho_1} (\Lambda_1)}$. Next, $\mathcal{X}_2$ is associated with 
	$\Vert \cdot \Vert_{\mathcal{X}_2}=\{  \Vert \cdot \Vert_{{^c}H^{\rho_2} \Big((a_2,b_2); L^2(\Lambda_{1}) \Big)}^2+ \Vert \cdot \Vert_{ L^2\Big((a_2,b_2); \mathcal{X}_{1}\Big)}^2   \}, $
	where
	\begin{eqnarray}
	&&\Vert u \Vert_{{^{\mathfrak{D}}}H^{\rho_2} \Big((a_2,b_2); L^2(\Lambda_{1}) \Big)}^2 
	\nonumber
	\\
	&&= \int_{\nu_2^{min}}^{\nu_2^{max}} \rho_2(\nu_2)\, \int_{a_1}^{b_1}\, \Big( \int_{a_2}^{b_2}\,  \vert \prescript{}{a_2}{\mathcal{D}}_{x_2}^{\nu_2} u  \vert^2  \,  dx_2 + \int_{a_2}^{b_2}\,  \vert \prescript{}{x_2}{\mathcal{D}}_{b_2}^{\nu_2} u \vert^2  \,  dx_2 + \int_{a_2}^{b_2}\,  \vert  u \vert^2  \,  dx_2 \Big) \,dx_1 \, d\nu_2
	\nonumber
	\\
	&&=
	\int_{\nu_2^{min}}^{\nu_2^{max}} \rho_2(\nu_2)\, \Big(\Vert \prescript{}{x_2}{\mathcal{D}}_{b_2}^{\nu_2}\, (u)\Vert_{L^2(\Lambda_2)}^2+\Vert \prescript{}{a_2}{\mathcal{D}}_{x_2}^{\nu_2}\, (u)\Vert_{L^2(\Lambda_2)}^2 \Big) d\nu_2 +\Vert u \Vert_{L^2(\Lambda_2)}^2
	\nonumber
	\end{eqnarray}
	and
	\begin{eqnarray}
	&&\Vert u \Vert_{L^2\Big((a_2,b_2); \mathcal{X}_{1}\Big)}^2
	\nonumber
	\\
	&&=\int_{\nu_1^{min}}^{\nu_1^{max}} \rho_1(\nu_1)\, \int_{a_2}^{b_2}\, \Big( \int_{a_1}^{b_1}\,  \vert \prescript{}{a_1}{\mathcal{D}}_{x_1}^{\nu_1} u  \vert^2  \,  dx_1 + \int_{a_1}^{b_1}\,  \vert \prescript{}{x_1}{\mathcal{D}}_{b_1}^{\nu_1} u \vert^2  \,  dx_1 + \int_{a_1}^{b_1}\,  \vert u \vert^2  \,  dx_1 \Big) \,dx_2\, d\nu_1
	\nonumber
	\\
	&&=
	\int_{\nu_1^{min}}^{\nu_1^{max}} \rho_1(\nu_1)\, \Big(\Vert \prescript{}{x_1}{\mathcal{D}}_{b_1}^{\nu_1}\, u\Vert_{L^2(\Lambda_2)}^2+\Vert \prescript{}{a_1}{\mathcal{D}}_{x_1}^{\nu_1}\, u\Vert_{L^2(\Lambda_2)}^2 \Big) d\nu_1+\Vert u \Vert_{L^2(\Lambda_2)}^2.
	\nonumber
	\end{eqnarray}
	Let assume that
	\begin{equation}
	\label{norm_Xd_d-1}
	\nonumber
	\Vert \cdot \Vert_{\mathcal{X}_{d-1}} \cong \bigg{\{}  \sum_{i=1}^{d-1}  \int_{\nu_i^{min}}^{\nu_i^{max}} \rho_i(\nu_i) \, \Big( \Vert \prescript{}{x_i}{\mathcal{D}}_{b_i}^{\nu_i}\, (\cdot)\Vert_{L^2(\Lambda_{d-1})}^2+\Vert \prescript{}{a_i}{\mathcal{D}}_{x_i}^{\nu_i}\, (\cdot)\Vert_{L^2(\Lambda_{d-1})}^2 \Big)    d\nu_i  + \Vert  \cdot \Vert_{L^2(\Lambda_{d-1})}^2 \bigg{\}}^{\frac{1}{2}}.
	\end{equation}
	Then,
	\begin{eqnarray}
	\label{11141}
	&&\Vert u \Vert_{{^{\mathfrak{D}}}H^{\rho_d}  \Big((a_d,b_d); L^2(\Lambda_{d-1}) \Big)}^2 
	\nonumber
	\\
	&&= 
	\int^{}_{\Lambda_{d-1}}\Big(\int_{a_d}^{b_d}\,  \vert  u \vert^2  \,  dx_d \Big)d\Lambda_{d-1} 
		\nonumber
	\\
	&&
	+ \int^{}_{\Lambda_{d-1}}\Big( \int_{a_d}^{b_d}\int_{\nu_d^{min}}^{\nu_d^{max}} \rho_d(\nu_d) \big(     \vert \prescript{}{a_d}{\mathcal{D}}_{x_d}^{\nu_d} u  \vert^2  + \vert \prescript{}{x_d}{\mathcal{D}}_{b_d}^{\nu_d} u \vert^2 \big)  d\nu_d  \,dx_d\Big)d\Lambda_{d-1}
	\nonumber
	\\
	&&=
	\int_{\nu_d^{min}}^{\nu_d^{max}} \rho_d(\nu_d)\, \Big(\Vert \prescript{}{x_d}{\mathcal{D}}_{b_d}^{\nu_d}\, (u)\Vert_{L^2(\Lambda_d)}^2+\Vert \prescript{}{a_d}{\mathcal{D}}_{x_d}^{\nu_d}\, (u)\Vert_{L^2(\Lambda_d)}^2 \Big) d\nu_d +\Vert u \Vert_{L^2(\Lambda_d)}^2,
	\end{eqnarray}
	and
	\begin{align}
	\label{11142}
	& \Vert u \Vert_{L^2\Big((a_d,b_d); \mathcal{X}_{d-1}\Big)}^2 
	\nonumber
	\\
	&= \int_{a_d}^{b_d} \int_{\Lambda_{d-1}}^{} \Big( \sum_{i=1}^{d-1}\int_{\nu_i^{min}}^{\nu_i^{max}} \rho_i(\nu_i)\,   \big(\vert \prescript{}{a_i}{\mathcal{D}}_{x_i}^{\nu_i} u  \vert^2 +  \vert \prescript{}{x_i}{\mathcal{D}}_{b_i}^{\nu_i} u \vert^2  \big) d\nu_{i}   \Big)d\Lambda_{d-1}\, dx_d 
	\nonumber
	\\
	& \quad +\int_{a_d}^{b_d} \int_{\Lambda_{d-1}}^{} \vert u \vert^2d\Lambda_{d-1}\, dx_d
	\nonumber
	\\
	&	=
	\sum_{i=1}^{d-1}\int_{\nu_i^{min}}^{\nu_i^{max}} \rho_i(\nu_i)\,  \Big( \Vert \prescript{}{x_i}{\mathcal{D}}_{b_i}^{\nu_i}\, u\Vert_{L^2(\Lambda_d)}^2+\Vert \prescript{}{a_i}{\mathcal{D}}_{x_i}^{\nu_i}\, u\Vert_{L^2(\Lambda_d)}^2 \Big)d\nu_i+\Vert u \Vert_{L^2(\Lambda_d)}^2.
	\end{align}
	Therefore, \eqref{norm_Xd_2} arises from \eqref{11141} and \eqref{11142} and the proof is complete.
\end{proof}

The following assumptions allow us to prove the uniqueness of the bilinear form.
\begin{assum}
	\label{assum 1}
	For $u \in \mathcal{X}_d$
	\begin{align*}
	\underset{u \in \mathcal{X}_d}{\sup} \int_{\nu_i^{min}}^{\nu_i^{max}} \rho_i(\nu_i)\,  \Big(\vert \big(\prescript{}{a_i}{\mathcal{D}}_{x_i}^{\nu_i} u, \prescript{}{x_i}{\mathcal{D}}_{b_i}^{\nu_i} v\big)_{\Lambda_d}\vert+\vert \big(\prescript{}{x_i}{\mathcal{D}}_{b_i}^{\nu_i} u, \prescript{}{a_i}{\mathcal{D}}_{x_i}^{\nu_i} v\big)_{\Lambda_d} \vert \Big)d\nu_i >0, \quad \forall v \in \mathcal{X}_d
	\end{align*}
	when $i=1,\cdots,d$, and $\Lambda_{d}^i=\prod_{\underset{j\neq i}{j=1}}^{d}(a_j,b_j)$.
	%{^c}H^{\nu_i}_0 \Big((a_i,b_i); L^2(\Lambda_{d}^i) \Big)
\end{assum}
\begin{assum}
	\label{assum 2}
	For $u \in {^{l,\mathfrak{D}}}H^{\varphi}(I; L^2(\Lambda_d))$
	$\underset{0 \neq u \in\prescript{l,\mathfrak{D}}{}H^{\varphi}(I; L^2(\Lambda_d))}{\sup}\int_{\tau^{min}}^{\tau^{max}}\varphi(\tau)\,\vert( \prescript{}{0}{\mathcal{D}}_{t}^{\tau} u, \prescript{}{t}{\mathcal{D}}_{T}^{\tau} v)_{\Omega} \vert d\tau >0$ $\forall v \in {^{r,\mathfrak{D}}}H^{\varphi}(I; L^2(\Lambda_d))$.
\end{assum}

In Lemma 3.3 in \cite{samiee2017unified1}, it is shown that if $\,1<2\nu_i<2$ for $i=1,\cdots,d$ and $u,v \in  \mathcal{X}_d$, then $\big(\prescript{}{x_i}{\mathcal{D}}_{b_i}^{2\nu_i} u, v\big)_{\Lambda_d}=\big(\prescript{}{x_i}{\mathcal{D}}_{b_i}^{\nu_i} u, \prescript{}{a_i}{\mathcal{D}}_{x_i}^{\nu_i} v\big)_{\Lambda_d},$ and
$\big(\prescript{}{a_i}{\mathcal{D}}_{x_i}^{2\nu_i} u, v\big)_{\Lambda_d}=\big(\prescript{}{a_i}{\mathcal{D}}_{x_i}^{\nu_i} u, \prescript{}{x_i}{\mathcal{D}}_{b_i}^{\nu_i} v\big)_{\Lambda_d}.$ Consequently, we derive
\begin{equation}
\label{ddd1}
\int_{\nu_i^{min}}^{\nu_i^{max}} \rho_i(\nu_i)\, \big(\prescript{}{x_i}{\mathcal{D}}_{b_i}^{2\nu_i} u, v\big)_{\Lambda_d}\, d\nu_i=\int_{\nu_i^{min}}^{\nu_i^{max}} \rho_i(\nu_i)\,\big(\prescript{}{x_i}{\mathcal{D}}_{b_i}^{\nu_i} u, \prescript{}{a_i}{\mathcal{D}}_{x_i}^{\nu_i} v\big)_{\Lambda_d}\, d\nu_i
\end{equation}
and
\begin{equation}
\label{ddd2}
\int_{\nu_i^{min}}^{\nu_i^{max}} \rho_i(\nu_i)\, \big(\prescript{}{a_i}{\mathcal{D}}_{x_i}^{2\nu_i} u, v\big)_{\Lambda_d}\, d\nu_i=\int_{\nu_i^{min}}^{\nu_i^{max}} \rho_i(\nu_i)\,\big(\prescript{}{a_i}{\mathcal{D}}_{x_i}^{\nu_i} u, \prescript{}{x_i}{\mathcal{D}}_{b_i}^{\nu_i} v\big)_{\Lambda_d}\, d\nu_i.
\end{equation}
Additionally, in the light of Lemma 3.2 in \cite{samiee2017unified1}, we have
\begin{align}
\label{equiv_space}
& \int_{\nu_i^{min}}^{\nu_i^{max}} \rho_i(\nu_i)\,\Big( \vert \big(\prescript{}{a_i}{\mathcal{D}}_{x_i}^{\nu_i} u, \prescript{}{x_i}{\mathcal{D}}_{b_i}^{\nu_i} v\big)_{\Lambda_d} \vert + \vert \big(\prescript{}{x_i}{\mathcal{D}}_{b_i}^{\nu_i} u, \prescript{}{a_i}{\mathcal{D}}_{x_i}^{\nu_i} v\big)_{\Lambda_d} \vert\Big) d\nu_i 
\\ \nonumber
&\cong 
\,\, \vert u \vert_{{^{\mathfrak{D}}}H^{\rho_i} \Big((a_i,b_i); L^2(\Lambda_{d}^i) \Big)} \, \vert v \vert_{{^{\mathfrak{D}}}H^{\rho_i} \Big((a_i,b_i); L^2(\Lambda_{d}^i) \Big)},
\end{align}
%and similarly
%\begin{equation}
%\label{equiv_space2}
%\int_{\nu_i^{min}}^{\nu_i^{max}} \rho_i(\nu_i)\, \vert \big(\prescript{}{x_i}{\mathcal{D}}_{b_i}^{\nu_i} u, \prescript{}{a_i}{\mathcal{D}}_{x_i}^{\nu_i} v\big)_{\Lambda_d} \vert d\nu_i \cong  \vert u \vert_{{^c}H^{\rho_i} \Big((a_i,b_i); L^2(\Lambda_{d}^i) \Big)} \, \vert v \vert_{{^c}H^{\rho_i} \Big((a_i,b_i); L^2(\Lambda_{d}^i) \Big)}
%\end{equation}
%
for $i=1,\cdots,d$, where Assumption \ref{assum 1} holds.

Next, we study the property of the fractional time-derivative in the following lemmas. 
\begin{lem}
	\label{lemma_ehsan}
	If  $0<2\tau^{min}<2\tau^{max}<1$ $(1 <2\tau^{min}<2\tau^{max}< 2)$ and $u,v \, \in \, {^{l,\mathfrak{D}}}H^{\varphi}(I)$, when $u\vert_{t=0}(=\frac{du}{dt}\vert_{t=0})=0$, then 
	\begin{equation}
	\label{eq-ehsan}
	\int_{\tau^{min}}^{\tau^{max}}\varphi(\tau)\,\big(\prescript{}{0}{\mathcal{D}}_{t}^{2\tau} u, v\big)_I d\tau=\int_{\tau^{min}}^{\tau^{max}}\varphi(\tau)\,\big(\prescript{}{0}{\mathcal{D}}_{t}^{\tau} u, \prescript{}{t}{\mathcal{D}}_{T}^{\tau} v\big)_I\, d\tau,
	\end{equation}
	where $I=(0,\, T)$, $0<\varphi(\tau)\in L^1\Big([\tau^{min},\tau^{max}]\Big)$. 
	%and $\varphi(\tau)<s$, when $s>0$ is a finite real value.
\end{lem}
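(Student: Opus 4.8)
The plan is to establish the identity first for each fixed order $\tau$ and then to integrate the resulting pointwise relation against the distribution $\varphi(\tau)$. Concretely, I would reduce the statement to the single-order integration-by-parts formula
\[
(\prescript{}{0}{\mathcal{D}}_{t}^{2\tau} u, v)_I = (\prescript{}{0}{\mathcal{D}}_{t}^{\tau} u, \prescript{}{t}{\mathcal{D}}_{T}^{\tau} v)_I,
\]
valid for every $\tau$ in the admissible range, and then pass to the distributed-order statement \eqref{eq-ehsan} by multiplying through by $\varphi(\tau)$ and interchanging the $\tau$-integration with the $L^2(I)$ inner product by Fubini's theorem. The interchange is legitimate because $\varphi \in L^1([\tau^{min},\tau^{max}])$ and the membership $u,v \in {^{l,\mathfrak{D}}}H^{\varphi}(I)$ controls the $\tau$-integral of the fractional seminorms, so the double integrand is absolutely integrable.

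For the single-order identity at fixed $\tau$, I would use the composition (semigroup) property of the left-sided derivative to split the order, writing $\prescript{}{0}{\mathcal{D}}_{t}^{2\tau} u = \prescript{}{0}{\mathcal{D}}_{t}^{\tau}(\prescript{}{0}{\mathcal{D}}_{t}^{\tau} u)$, and then apply the adjoint (fractional integration-by-parts) relation between the left- and right-sided Riemann--Liouville operators,
\[
(\prescript{}{0}{\mathcal{D}}_{t}^{\tau} w, v)_I = (w, \prescript{}{t}{\mathcal{D}}_{T}^{\tau} v)_I,
\]
with $w = \prescript{}{0}{\mathcal{D}}_{t}^{\tau} u$. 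The homogeneous initial data are exactly what make both steps exact: they force the intermediate quantity $\prescript{}{0}{\mathcal{D}}_{t}^{\tau} u$ to vanish at $t=0$, so the boundary contributions that would otherwise appear in the semigroup law and in the integration by parts drop out, and the Riemann--Liouville and Caputo derivatives coincide as noted after \eqref{eq5}. This is the temporal analogue of the spatial relations \eqref{ddd1}--\eqref{ddd2}, but it requires a separate argument because the total order $2\tau$ may cross the value $1$.

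The two regimes in the hypothesis must be treated separately according to whether $2\tau$ lies below or above $1$. In the first regime, $0 < 2\tau^{min} < 2\tau^{max} < 1$, one has $\tau \in (0,\frac{1}{2})$ and $2\tau \in (0,1)$, so the single vanishing condition $u|_{t=0}=0$ suffices to clear all boundary terms. In the second regime, $1 < 2\tau^{min} < 2\tau^{max} < 2$, one has $\tau \in (\frac{1}{2},1)$ and $2\tau \in (1,2)$; here the extra endpoint term generated by the higher differentiation order is removed by the second condition $\frac{du}{dt}|_{t=0}=0$. I expect the main obstacle to be the rigorous justification of the composition property $\prescript{}{0}{\mathcal{D}}_{t}^{2\tau} = \prescript{}{0}{\mathcal{D}}_{t}^{\tau}\,\prescript{}{0}{\mathcal{D}}_{t}^{\tau}$ at the regularity level guaranteed only by membership in ${^{l,\mathfrak{D}}}H^{\varphi}(I)$, together with verifying that $\prescript{}{0}{\mathcal{D}}_{t}^{\tau} u$ has sufficient regularity and the correct vanishing at the endpoint for the adjoint identity to hold with no residual boundary terms. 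A density argument, approximating $u$ and $v$ by smooth functions with matching homogeneous data in the norm of the distributed Sobolev space and passing to the limit, would be the natural device to make these manipulations rigorous.
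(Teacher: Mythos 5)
Your proposal is correct and follows essentially the same route as the paper: the paper likewise reduces \eqref{eq-ehsan} to the fixed-order identity $\big(\prescript{}{0}{\mathcal{D}}_{t}^{2\tau} u, v\big)_I=\big(\prescript{}{0}{\mathcal{D}}_{t}^{\tau} u, \prescript{}{t}{\mathcal{D}}_{T}^{\tau} v\big)_I$ (which it simply cites from \cite{kharazmi2016petrov}, under the additional hypothesis $v\vert_{t=T}(=\tfrac{dv}{dt}\vert_{t=T})=0$) and then integrates against $\varphi(\tau)$. Your sketch of the fixed-order identity via the semigroup property and the adjoint relation, and your attention to the Fubini interchange, merely fill in the details the paper delegates to its reference.
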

\begin{proof}
	It follows from \cite{kharazmi2016petrov} that for $u,v \in H^{\tau}(I)$, when $u\vert_{t=0}(=\frac{du}{dt}\vert_{t=0})=0$ and $v\vert_{t=T}(=\frac{dv}{dt}\vert_{t=T})=0$, we have
	\begin{equation}
	\label{eq-ehsan2}
	\big(\prescript{}{0}{\mathcal{D}}_{t}^{2\tau} u, v\big)_I=\big(\prescript{}{0}{\mathcal{D}}_{t}^{\tau} u, \prescript{}{t}{\mathcal{D}}_{T}^{\tau} v\big)_I.
	\end{equation} 
	Then \eqref{eq-ehsan} arises from \eqref{eq-ehsan2}.
\end{proof}
Let $0<2\tau^{min}<2\tau^{max}<1$ $(1<2\tau^{min}<2\tau^{max}<2),$ and $\Omega=I\times \Lambda_d$, where $I=(0,T)$ and $\Lambda_d=\prod_{i=1}^{d}(a_i,b_i)$. We define
\begin{align}
& {^{l,\mathfrak{D}}}H^{\varphi} \Big(I; L^2(\Lambda_d) \Big) 
\\ \nonumber
&:= 
\Big{\{} u \,|\, \Vert u(t,\cdot) \Vert_{L^2(\Lambda_d)} \in {^{l,\mathfrak{D}}}H^{\varphi}(I), u\vert_{t=0}(=\frac{du}{dt}\vert_{t=0})=u\vert_{x_i=a_i}=u\vert_{x_i=b_i}=0,\, i=1,\cdots,d  \Big{\}},
\end{align}
which is endowed with the norm $\Vert \cdot \Vert_{{^{l,\mathfrak{D}}}H^{\varphi} \Big(I; L^2(\Lambda_d) \Big)}$, where
we have
\begin{align}
\label{norm_222}
\Vert u \Vert_{{^{l,\mathfrak{D}}}H^{\varphi}(I; L^2(\Lambda_d))} = &\Big{\Vert} \, \Vert u(t,\cdot) \Vert_{L^2(\Lambda_d)}\, \Big{\Vert}_{{^{l,\mathfrak{D}}}H^{\varphi}(I)}
\nonumber
%\\
%=&
%\bigg{\{}	\int_{\tau^{min}}^{\tau^{max}}\varphi(\tau)\, \int_{0}^{T}\bigg( \big{(}\int_{\Lambda_d}^{}  \vert \prescript{}{0}{\mathcal{D}}_{t}^{\tau} u \vert^2  \, d\Lambda_d \big{)}^{\frac{1}{2}} \bigg)^{2}   \,dt\, d\tau + \int_{0}^{T} \int_{\Lambda_d}^{}  \vert u \vert^2  \, d\Lambda_d   \,dt\bigg{\}}^{\frac{1}{2}} 
%\nonumber
\\
=& 
%\bigg{\{} \int_{0}^{T} \int_{\Lambda_d}^{}  \vert \prescript{}{0}{\mathcal{D}}_{t}^{\tau} u \vert^2  \, d\Lambda_d dt \bigg{\}}^{\frac{1}{2}}= 
\Big(	\int_{\tau^{min}}^{\tau^{max}}\varphi(\tau)\,\Vert \prescript{}{0}{\mathcal{D}}_{t}^{\tau}\, (u)\Vert_{L^2(\Omega)}^2\, d\tau + \Vert u\Vert_{L^2(\Omega)}^2 \Big)^{\frac{1}{2}}.
\end{align}
\noindent Similarly, we define
\begin{align}
& {^{r,\mathfrak{D}}}H^{\varphi} \Big(I; L^2(\Lambda_d) \Big) 
\\
\nonumber 
& := \Big{\{} v \,|\, \Vert v(t,\cdot) \Vert_{L^2(\Lambda_d)} \in {^{r,\mathfrak{D}}}H^{\varphi}(I), v\vert_{t=T}(=\frac{dv}{dt}\vert_{t=0})=v\vert_{x_i=a_i}=v\vert_{x_i=b_i}=0,\, i =1,\cdots,d  \Big{\}},
\end{align}
which is equipped with the norm $\Vert \cdot \Vert_{{^{r,\mathfrak{D}}}H^{\varphi}(I; L^2(\Lambda_d))}$. Following \eqref{norm_222},
\begin{eqnarray}
\Vert v \Vert_{{^{r,\mathfrak{D}}}H^{\varphi}(I; L^2(\Lambda_d))} &=& \Big{\Vert} \, \Vert v(t,\cdot) \Vert_{L^2(\Lambda_d)}\, \Big{\Vert}_{{^{r,\mathfrak{D}}}H^{\varphi}(I)}
\nonumber
\\
&=&
\Big(	\int_{\tau^{min}}^{\tau^{max}}\varphi(\tau)\,\Vert \prescript{}{t}{\mathcal{D}}_{T}^{\tau}\, (v)\Vert_{L^2(\Omega)}^2 d\tau+\Vert v\Vert_{L^2(\Omega)}^2\Big)^{\frac{1}{2}}.
\end{eqnarray}
\begin{lem}
	\label{norm_223}
	For $u\in {^{r,\mathfrak{D}}}H^{\varphi}(I; L^2(\Lambda_d))$ and $0<2\tau^{min}<2\tau^{max}<1$ $(1<2\tau^{min}<2\tau^{max}<2),$ $\int_{\tau^{min}}^{\tau^{max}}\varphi(\tau)\,\vert( \prescript{}{0}{\mathcal{D}}_{t}^{\tau} u, \prescript{}{t}{\mathcal{D}}_{T}^{\tau} v)_{\Omega} \vert\, d\tau \leq \Vert u \Vert_{{^{l,\mathfrak{D}}}H^{\varphi}(I; L^2(\Lambda_d))} \, \Vert v \Vert_{{^{r,\mathfrak{D}}}H^{\varphi}(I; L^2(\Lambda_d))} $ $\forall v \in {^{r,\mathfrak{D}}}H^{\varphi}(I; L^2(\Lambda_d)).$
\end{lem}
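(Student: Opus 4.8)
The statement is exactly the continuity (boundedness) of the distributed-order temporal bilinear form, so the plan is to establish it by a two-stage Cauchy--Schwarz argument. First I would fix $\tau \in (\tau^{min},\tau^{max})$ and apply the Cauchy--Schwarz inequality in $L^2(\Omega)$ to the inner product, obtaining $\vert(\prescript{}{0}{\mathcal{D}}_{t}^{\tau} u, \prescript{}{t}{\mathcal{D}}_{T}^{\tau} v)_{\Omega}\vert \leq \Vert \prescript{}{0}{\mathcal{D}}_{t}^{\tau} u\Vert_{L^2(\Omega)}\,\Vert \prescript{}{t}{\mathcal{D}}_{T}^{\tau} v\Vert_{L^2(\Omega)}$ for a.e.\ $\tau$. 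Multiplying by the weight $\varphi(\tau)$ and integrating over $(\tau^{min},\tau^{max})$ then reduces the left-hand side to a single weighted integral of the product of the two $L^2(\Omega)$-norms.

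Next, since $\varphi(\tau)>0$ on its support, I would split the weight as $\varphi=\varphi^{1/2}\varphi^{1/2}$ and interpret the resulting $\tau$-integral as an inner product in the weighted space $L^2\big((\tau^{min},\tau^{max});\,\varphi\,d\tau\big)$ of the two functions $\tau \mapsto \Vert \prescript{}{0}{\mathcal{D}}_{t}^{\tau} u\Vert_{L^2(\Omega)}$ and $\tau \mapsto \Vert \prescript{}{t}{\mathcal{D}}_{T}^{\tau} v\Vert_{L^2(\Omega)}$. A second application of Cauchy--Schwarz, now in this weighted $L^2$ variable, yields
\begin{equation}
\nonumber
\int_{\tau^{min}}^{\tau^{max}}\varphi(\tau)\,\vert( \prescript{}{0}{\mathcal{D}}_{t}^{\tau} u, \prescript{}{t}{\mathcal{D}}_{T}^{\tau} v)_{\Omega} \vert\, d\tau \leq \Big(\int_{\tau^{min}}^{\tau^{max}}\varphi(\tau)\,\Vert \prescript{}{0}{\mathcal{D}}_{t}^{\tau} u\Vert_{L^2(\Omega)}^2 d\tau\Big)^{\frac{1}{2}}\Big(\int_{\tau^{min}}^{\tau^{max}}\varphi(\tau)\,\Vert \prescript{}{t}{\mathcal{D}}_{T}^{\tau} v\Vert_{L^2(\Omega)}^2 d\tau\Big)^{\frac{1}{2}}.
\end{equation}

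To conclude, I would identify the two factors on the right with the relevant norms. Comparing with \eqref{norm_222}, the first square root is the purely fractional part of $\Vert u \Vert_{{^{l,\mathfrak{D}}}H^{\varphi}(I; L^2(\Lambda_d))}^2$ and is therefore bounded by $\Vert u \Vert_{{^{l,\mathfrak{D}}}H^{\varphi}(I; L^2(\Lambda_d))}$, since the latter additionally contains the nonnegative term $\Vert u\Vert_{L^2(\Omega)}^2$; likewise the second factor is bounded by $\Vert v \Vert_{{^{r,\mathfrak{D}}}H^{\varphi}(I; L^2(\Lambda_d))}$. Chaining the two inequalities gives the claimed estimate.

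The point that requires genuine care, and which I expect to be the main obstacle, is reconciling the hypothesis $u\in {^{r,\mathfrak{D}}}H^{\varphi}(I; L^2(\Lambda_d))$ with the appearance of the \emph{left} norm $\Vert u \Vert_{{^{l,\mathfrak{D}}}H^{\varphi}}$ on the right-hand side: the quantity $\prescript{}{0}{\mathcal{D}}_{t}^{\tau} u$ must first be shown to be well defined and the left norm finite for such $u$. This is handled by invoking the equivalence of the left and right distributed-order norms established in \cite{kharazmi2016petrov} (cf.\ \eqref{equiv_time_1}), together with the underlying equivalences \eqref{equivalent}, which guarantee that the two distributed Sobolev spaces coincide with equivalent norms under the standing restriction $2\tau^{min}<2\tau^{max}$ avoiding the half-integer exceptional orders. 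A secondary, purely technical point is the measurability in $\tau$ of the integrands and the use of Tonelli/Fubini to legitimize treating the $\tau$-integral as a weighted $L^2$ inner product; both follow from $\varphi\in L^1$ and the finiteness of the norms just discussed.
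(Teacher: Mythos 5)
Your proposal is correct and follows essentially the same route as the paper: a double Cauchy--Schwarz/H\"older argument (pointwise in $\Omega$, then in the $\varphi$-weighted $\tau$-variable, which the paper performs in a single application over the product measure $\varphi(\tau)\,dt\,d\Lambda_d\,d\tau$), followed by bounding each fractional seminorm factor by the corresponding full distributed norm. If anything, your version is slightly more careful than the paper's, which writes the final enlargement by the $\Vert\cdot\Vert_{L^2(\Omega)}^2$ terms as an equality and leaves the left-norm-versus-right-space issue for $u$ unaddressed, whereas you make both points explicit.
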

\begin{proof} 
	From Lemma 3.6 in \cite{samiee2017unified1} we have
	\begin{eqnarray}
	\vert( \prescript{}{0}{\mathcal{D}}_{t}^{\tau} u, \prescript{}{t}{\mathcal{D}}_{T}^{\tau} v)_{\Omega}\vert
	&\leq & \Big( \Vert \prescript{}{0}{\mathcal{D}}_{t}^{\tau} u \Vert_{L^2(\Omega)}^2+\Vert u \Vert_{L^2(\Omega)}^2\Big)^{\frac{1}{2}} \, \Big( \Vert \prescript{}{t}{\mathcal{D}}_{T}^{\tau} v \Vert_{L^2(\Omega)}^2+ \Vert  v \Vert_{L^2(\Omega)}^2\Big)^{\frac{1}{2}}.
	\end{eqnarray}
	Followingly, by H\"{o}lder inequality 
	\begin{eqnarray}
	&&\int_{\tau^{min}}^{\tau^{max}}\varphi(\tau)\,\vert( \prescript{}{0}{\mathcal{D}}_{t}^{\tau} u, \prescript{}{t}{\mathcal{D}}_{T}^{\tau} v)_{\Omega}\vert d\tau
	\\
	\nonumber
	&&
	=\int_{\tau^{min}}^{\tau^{max}}\varphi(\tau)\,  \int_{\Lambda_d}^{} \int_{0}^{T} \vert \prescript{}{0}{\mathcal{D}}_{t}^{\tau} u \vert\, \vert \prescript{}{t}{\mathcal{D}}_{T}^{\tau} v \vert dt \,d\Lambda_d  \, d\tau
	\\
	\nonumber
	&&\leq \Big( \int_{\tau^{min}}^{\tau^{max}}\int_{\Lambda_d}^{} \int_{0}^{T}  \varphi(\tau) \,\vert \prescript{}{0}{\mathcal{D}}_{t}^{\tau} u \vert^2\, dt d\Lambda_d \Big)^{\frac{1}{2}} \, \Big( \int_{\tau^{min}}^{\tau^{max}} \int_{\Lambda_d}^{} \int_{0}^{T}   \varphi(\tau) \, \vert \prescript{}{t}{\mathcal{D}}_{T}^{\tau} v \vert^2\, dt d\Lambda_d \Big)^{\frac{1}{2}} 
	\nonumber
	%\\
	%\nonumber
	%&&\leq
	%\Big( \int_{\Lambda_d}^{} \int_{0}^{T}  \vert \prescript{}{0}{\mathcal{D}}_{t}^{\tau} u \vert^2\, dt d\Lambda_d + \int_{\Lambda_d}^{} \int_{0}^{T}  \vert u \vert^2\, dt d\Lambda_d \Big)^{\frac{1}{2}} \, \Big( \int_{\Lambda_d}^{} \int_{0}^{T}  \vert \prescript{}{t}{\mathcal{D}}_{T}^{\tau} v \vert^2\, dt d\Lambda_d + \int_{\Lambda_d}^{} \int_{0}^{T}  \vert  v \vert^2\, dt d\Lambda_d \Big)^{\frac{1}{2}} 
	\\
	\nonumber
	&&= \Big(\int_{\tau^{min}}^{\tau^{max}}\varphi(\tau)\, \Vert \prescript{}{0}{\mathcal{D}}_{t}^{\tau} u \Vert_{L^2(\Omega)}^2 d\tau+\Vert u \Vert_{L^2(\Omega)}^2\Big)^{\frac{1}{2}} \, \Big(\int_{\tau^{min}}^{\tau^{max}}\varphi(\tau)\, \Vert \prescript{}{t}{\mathcal{D}}_{T}^{\tau} v \Vert_{L^2(\Omega)}^2 d\tau+ \Vert  v \Vert_{L^2(\Omega)}^2\Big)^{\frac{1}{2}}
	\\
	\nonumber
	&&= \Vert u \Vert_{{^{r,\mathfrak{D}}}H^{\varphi}(I; L^2(\Lambda_d))} \, \Vert v \Vert_{{^{r,\mathfrak{D}}}H^{\varphi}(I; L^2(\Lambda_d))}.
	\end{eqnarray}
\end{proof}
\begin{lem}
	\label{norm_2231}
	For any $u\in {^{l,\mathfrak{D}}}H^{\varphi}(I; L^2(\Lambda_d))$ and $0<2\tau^{min}<2\tau^{max}<1$ $(1<2\tau^{min}<2\tau^{max}<2)$ there exists a constant $c>0$ and independent of $u$ such that 
	\begin{equation}
	\label{Lemma3.7}
	\underset{0 \neq v \in\prescript{r,\mathfrak{D}}{}H^{\varphi}(I; L^2(\Lambda_d))}{\sup} 
	\frac{
		\int_{\tau^{min}}^{\tau^{max}}\varphi(\tau)\,\vert( \prescript{}{0}{\mathcal{D}}_{t}^{\tau} u, \prescript{}{t}{\mathcal{D}}_{T}^{\tau} v)_{\Omega} \vert d\tau}{\vert v \vert_{{^{r,\mathfrak{D}}}H^{\varphi}(I; L^2(\Lambda_d))}} 
	\geq c \vert u \vert_{{^{l,\mathfrak{D}}}H^{\varphi}(I; L^2(\Lambda_d))}, 
	\end{equation}
	under Assumption \ref{assum 2}.
\end{lem}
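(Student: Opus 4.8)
The estimate \eqref{Lemma3.7} is the trial--side inf--sup condition of the generalized Lax--Milgram (inf--sup) framework: together with the continuity bound of Lemma~\ref{norm_223} and the test--side non--degeneracy of Assumption~\ref{assum 2}, it is what delivers well--posedness of the temporal part of the bilinear form. The plan is to prove it by reducing, order by order in $\tau$, to the fixed--order temporal inf--sup already established for the pair $\big({^l}H^{\tau}(I),{^r}H^{\tau}(I)\big)$ in \cite{samiee2017unified1}, and then to lift the per--order estimate through the $\tau$--integral against the positive weight $\varphi$. First I would use Lemma~\ref{lemma_ehsan} to rewrite the integrand as $\big(\prescript{}{0}{\mathcal{D}}_{t}^{2\tau}u,v\big)$, so that the pairing is the distributed derivative tested against $v$; then, invoking the equivalences \eqref{equivalent} and \eqref{equiv_time_1}, I would record that for each fixed $\tau\in[\tau^{min},\tau^{max}]$ there is a constant $c_{\tau}>0$ with
\begin{equation}
\nonumber
\underset{0\neq v}{\sup}\,\frac{\vert\big(\prescript{}{0}{\mathcal{D}}_{t}^{\tau} u,\prescript{}{t}{\mathcal{D}}_{T}^{\tau} v\big)_{I}\vert}{\Vert \prescript{}{t}{\mathcal{D}}_{T}^{\tau} v\Vert_{L^2(I)}}\;\geq\; c_{\tau}\,\Vert \prescript{}{0}{\mathcal{D}}_{t}^{\tau} u\Vert_{L^2(I)},
\end{equation}
the optimal test direction being $\prescript{}{t}{\mathcal{I}}_{T}^{\tau}\prescript{}{0}{\mathcal{D}}_{t}^{\tau}u$.

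Next I would check that $c_{\tau}$ can be taken uniform in $\tau$. The constant produced by \eqref{equiv_time_1} is controlled by a factor of the type $\vert\cos(\pi\tau)\vert$, which stays bounded below by a strictly positive number on the compact interval $[\tau^{min},\tau^{max}]$ precisely because $\tau^{min},\tau^{max}\neq\tfrac12$ (equivalently $2\tau^{min},2\tau^{max}\neq1$); hence $c:=\inf_{\tau}c_{\tau}>0$. To lift to the distributed seminorm I would apply the pointwise bound inside $\Lambda_d$, weight by $\varphi(\tau)>0$, integrate in $\tau$, and organise the result through the same Cauchy--Schwarz/H\"older structure used in Lemma~\ref{norm_223}, so that the numerator of \eqref{Lemma3.7} is bounded below by $c\,\vert u\vert_{{^{l,\mathfrak{D}}}H^{\varphi}(I;L^2(\Lambda_d))}\,\vert v\vert_{{^{r,\mathfrak{D}}}H^{\varphi}(I;L^2(\Lambda_d))}$ for the chosen $v$; dividing by $\vert v\vert_{{^{r,\mathfrak{D}}}H^{\varphi}(I;L^2(\Lambda_d))}$ then gives \eqref{Lemma3.7}. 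Assumption~\ref{assum 2} enters exactly here, guaranteeing that the $\tau$--integrated pairing does not degenerate, i.e.\ that the resulting constant is strictly positive rather than merely nonnegative, in direct parallel with the way Assumption~\ref{assum 1} upgrades the spatial estimate \eqref{equiv_space} from an inequality to a two--sided equivalence.

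The main obstacle is the interchange of $\sup_{v}$ with the $\tau$--integral: for a single fixed order the optimal test function $\prescript{}{t}{\mathcal{I}}_{T}^{\tau}\prescript{}{0}{\mathcal{D}}_{t}^{\tau}u$ is explicit, but for the distributed order these optimisers depend on $\tau$ and cannot be used simultaneously over the whole range, so one is forced to commit to a single $\tau$--independent test function and absorb the resulting loss into the equivalence constants. I would handle this by viewing $u\mapsto\big(\prescript{}{0}{\mathcal{D}}_{t}^{\tau}u\big)_{\tau}$ and $v\mapsto\big(\prescript{}{t}{\mathcal{D}}_{T}^{\tau}v\big)_{\tau}$ as maps into the direct--integral (weighted $L^2$--in--$\tau$) Hilbert space in which the numerator is realised as an inner product, and then showing that the range of the test map is dense there; Assumption~\ref{assum 2} is precisely the hypothesis that excludes the degenerate directions, promoting this density into the strict lower bound. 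Verifying that the loss incurred by the single test function remains uniform over $[\tau^{min},\tau^{max}]$, which again hinges on $\tau^{min},\tau^{max}\neq\tfrac12$, is the delicate step.
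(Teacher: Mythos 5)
Your reduction to the fixed-order inf-sup and your observation that the per-order optimal test function $\prescript{}{t}{\mathcal{I}}_{T}^{\tau}\prescript{}{0}{\mathcal{D}}_{t}^{\tau}u$ cannot be used simultaneously for all $\tau$ correctly isolate the crux of the lemma, and your remark that the relevant coercivity constant behaves like $\vert\cos(\pi\tau)\vert$ and stays bounded away from zero on $[\tau^{min},\tau^{max}]$ (since $2\tau\neq 1$ there) is exactly the uniformity the argument needs. The gap is in how you resolve the $\tau$-dependence of the optimizer. A density argument in the direct-integral space, even combined with the non-degeneracy of Assumption \ref{assum 2}, only shows that the supremum in \eqref{Lemma3.7} is strictly positive for each fixed $u$; it does not produce a constant $c>0$ independent of $u$, which is what the inf-sup statement requires. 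Non-degeneracy rules out the kernel, but without an explicit competitor $v$ whose pairing with $u$ you can bound from below quantitatively, you cannot extract a uniform lower bound from density alone.

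The paper closes this gap by exhibiting the single $\tau$-independent test function explicitly: following Duan et al.\ and Lemma 3.7 of \cite{samiee2017unified1}, it takes $\mathcal{V}_u = H(t-T)\,\big(u-u\vert_{t=T}\big)$, i.e.\ essentially $u$ itself transplanted into the right-sided space ${^{r,\mathfrak{D}}}H^{\varphi}(I;L^2(\Lambda_d))$. One then checks order by order that $\Vert\prescript{}{t}{\mathcal{D}}_{T}^{\tau}\mathcal{V}_u\Vert_{L^2(\Omega)}=\Vert\prescript{}{t}{\mathcal{D}}_{T}^{\tau}u\Vert_{L^2(\Omega)}$, so that after integrating against $\varphi$ the denominator $\vert\mathcal{V}_u\vert_{{^{r,\mathfrak{D}}}H^{\varphi}(I;L^2(\Lambda_d))}$ is equivalent to $\vert u\vert_{{^{l,\mathfrak{D}}}H^{\varphi}(I;L^2(\Lambda_d))}$ by \eqref{equivalent}, while the numerator is bounded below by $\tilde{\beta}\int_{\tau^{min}}^{\tau^{max}}\varphi(\tau)\,\Vert\prescript{}{0}{\mathcal{D}}_{t}^{\tau}u\Vert^2_{L^2(\Omega)}\,d\tau$ using precisely the uniform coercivity of the pairing $(\prescript{}{0}{\mathcal{D}}_{t}^{\tau}u,\prescript{}{t}{\mathcal{D}}_{T}^{\tau}u)$ that you already identified. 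Taking $v=\mathcal{V}_u$ in the supremum then yields \eqref{Lemma3.7} with $c=\tilde{\beta}$ up to equivalence constants. If you replace your density step with this (or any other) explicit choice of $v$ built from $u$, your argument goes through; as written, the quantitative lower bound is missing.
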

\begin{proof}
	Following Lemma 2.4 in \cite{duan2017space} and Lemma 3.7 in \cite{samiee2017unified1}, for any $u\in {^{l,\mathfrak{D}}}H^{\varphi}(I; L^2(\Lambda_d))$ let $\mathcal{V}_u= H(t-T) \, \big(u-u\vert_{t=T}\big) $ assuming that $ \int_{\tau^{min}}^{\tau^{max}} \varphi(\tau)\, \vert( \prescript{}{0}{\mathcal{D}}_{t}^{\tau} u, \prescript{}{t}{\mathcal{D}}_{T}^{\tau} u\vert_{t=T})_{\Omega} \vert >0$, where $H(t)$ is the Heaviside function. Evidently, $\mathcal{V}_u \in \prescript{r,\mathfrak{D}}{}H^{\varphi}(I; L^2(\Lambda_d))$.
	From H\"{o}lder inequality, we obtain
	\begin{eqnarray}
	\label{lemma3.7.1}
	&&\Vert \mathcal{V}_u\Vert^2_{{^{r,\mathfrak{D}}}H^{\varphi}(I; L^2(\Lambda_d))}
	\\
	\nonumber
	&&=
	\int_{\tau^{min}}^{\tau^{max}}\varphi(\tau)\,\Vert \prescript{}{t}{\mathcal{D}}_{T}^{\tau}\bigg(H(t-T)\, \big(u-u\vert_{t=T}\big) \bigg) \Vert_{L^2(\Omega)}^2 d\tau
	\\
	\nonumber
	&&=\int_{\tau^{min}}^{\tau^{max}}\varphi(\tau)\,\Vert \prescript{RL}{t}{\mathcal{I}}_{T}^{1-\tau}\, \frac{d}{dt}\bigg(H(t-T)\,\big(u-u\vert_{t=T}\big) \bigg) \Vert_{L^2(\Omega)}^2 d\tau
	\nonumber
	\\
	&&=\int_{\tau^{min}}^{\tau^{max}}\varphi(\tau)\,\Vert \prescript{RL}{t}{\mathcal{I}}_{T}^{1-\tau}\, \bigg(\frac{d\, H(t-T)}{dt}\, \big(u-u\vert_{t=T}\big) + H(t-T) \frac{d\,\big(u-u\vert_{t=T}\big) }{dt}\bigg) \Vert_{L^2(\Omega)}^2 d\tau \, \,
	\nonumber
	\\
	&&=\int_{\tau^{min}}^{\tau^{max}}\varphi(\tau)\,\Vert \prescript{RL}{t}{\mathcal{I}}_{T}^{1-\tau}\, \bigg( H(t-T) \frac{d\,\big(u-u\vert_{t=T}\big) }{dt}\bigg) \Vert_{L^2(\Omega)}^2 d\tau 
	\nonumber
	\\
	&&= \int_{\tau^{min}}^{\tau^{max}}\varphi(\tau)\,\Vert \prescript{}{t}{\mathcal{D}}_{T}^{\tau}\, u \Vert_{L^2(\Omega)}^2 d\tau,
	\nonumber
	\end{eqnarray}
	By \eqref{equivalent}, $\Vert \mathcal{V}_u\Vert^2_{\prescript{r,\mathfrak{D}}{}H^{\varphi}(I; L^2(\Lambda_d))} \cong \int_{\tau^{min}}^{\tau^{max}}\varphi(\tau)\,\Vert \prescript{}{0}{\mathcal{D}}_{t}^{\tau}\, u \Vert_{L^2(\Omega)}^2 d\tau = \Vert u\Vert^2_{\prescript{l,\mathfrak{D}}{}H^{\varphi}(I; L^2(\Lambda_d))}$. Hence, $\Vert \prescript{}{t}{\mathcal{D}}_{T}^{\tau} \mathcal{V}_u \Vert_{L^2(\Omega)}^2 $ $\cong \Vert \prescript{}{0}{\mathcal{D}}_{t}^{\tau} u\Vert^2_{L^2(\Omega)}$. Therefore,
	\begin{eqnarray}
	\label{Lemma3.7.3}
	\int_{\tau^{min}}^{\tau^{max}}\varphi(\tau)\,\vert( \prescript{}{0}{\mathcal{D}}_{t}^{\tau} u, \prescript{}{t}{\mathcal{D}}_{T}^{\tau} \mathcal{V}_u)_{\Omega} \vert   \,d\tau&=&\int_{\tau^{min}}^{\tau^{max}}\varphi(\tau)\,\int_{\Lambda_d}^{} \int_{0}^{T} \vert \prescript{}{0}{\mathcal{D}}_{t}^{\tau} u \vert\,\vert \prescript{}{t}{\mathcal{D}}_{T}^{\tau} \mathcal{V}_u \vert  dt\,d\Lambda_d \,d\tau
	\\ \nonumber
	&
	\geq&
	\tilde{\beta}\int_{\tau^{min}}^{\tau^{max}}\varphi(\tau)\,\int_{\Lambda_d}^{} \int_{0}^{T} \vert \prescript{}{0}{\mathcal{D}}_{t}^{\tau} u \vert^2 dt\,d\Lambda_d \,d\tau
	\\ \nonumber
	&=& \vert u\vert^2_{\prescript{l,\mathfrak{D}}{}H^{\varphi}(I; L^2(\Lambda_d))},
	\end{eqnarray}
	where $\tilde{\beta}>0$ and independent of $u$. Considering \eqref{lemma3.7.1} and \eqref{Lemma3.7.3}, we obtain
	\begin{eqnarray}
	\label{lemma-3.5}
	%\underset{0 \neq v \in\prescript{r}{0}H^{\varphi}(I; L^2(\Lambda_d))}{\sup} 
	\underset{0 \neq v \in\prescript{r,\mathfrak{D}}{}H^{\varphi}(I; L^2(\Lambda_d))}{\sup} 
	\frac{
		\int_{\tau^{min}}^{\tau^{max}}\varphi(\tau)\,\vert( \prescript{}{0}{\mathcal{D}}_{t}^{\tau} u, \prescript{}{t}{\mathcal{D}}_{T}^{\tau} v)_{\Omega} \vert d\tau}{\vert v \vert_{{^{r,\mathfrak{D}}}H^{\varphi}(I; L^2(\Lambda_d))}} 
	&\geq&
	\frac{\int_{\tau^{min}}^{\tau^{max}}\varphi(\tau)\,\vert( \prescript{}{0}{\mathcal{D}}_{t}^{\tau} u, \prescript{}{t}{\mathcal{D}}_{T}^{\tau} \mathcal{V}_u)_{\Omega} \vert d\tau}{\vert \mathcal{V}_u\vert_{\prescript{r,\mathfrak{D}}{}H^{\varphi}(I; L^2(\Lambda_d))}} 
	\nonumber
	\\
	\nonumber
	&\geq & \tilde{\beta} \, \vert u\vert_{\prescript{l,\mathfrak{D}}{}H^{\varphi}(I; L^2(\Lambda_d))}.
	\end{eqnarray}
	%Therefore, \eqref{Lemma3.7} arises from \eqref{lemma-3.5} and the proof is complete.
\end{proof}
\begin{lem}
	\label{lem_generalize_2}
	If  $0<2\tau^{min}<2\tau^{max}<1$ $(1 <2\tau^{min}<2\tau^{max}< 2)$ and $u,v \, \in \, \prescript{l,\mathfrak{D}}{}H^{\varphi}(I; L^2(\Lambda_d))$, then 
	\begin{equation}
	\label{eq-ehsan-2}
	\int_{\tau^{min}}^{\tau^{max}}\varphi(\tau)\,\big(\prescript{}{0}{\mathcal{D}}_{t}^{2\tau} u, v\big)_{\Omega} d\tau=\int_{\tau^{min}}^{\tau^{max}}\varphi(\tau)\,\big(\prescript{}{0}{\mathcal{D}}_{t}^{\tau} u, \prescript{}{t}{\mathcal{D}}_{T}^{\tau} v\big)_{\Omega}\, d\tau,
	\end{equation}
	where $0<\varphi(\tau)\in L^1\Big([\tau^{min},\tau^{max}]\Big)$.
\end{lem}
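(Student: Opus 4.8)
The plan is to reduce the claimed identity on the space--time cylinder $\Omega = I \times \Lambda_d$ to the purely temporal identity already established in Lemma~\ref{lemma_ehsan}. The essential observation is that all three operators appearing in \eqref{eq-ehsan-2}, namely $\prescript{}{0}{\mathcal{D}}_{t}^{2\tau}$, $\prescript{}{0}{\mathcal{D}}_{t}^{\tau}$, and $\prescript{}{t}{\mathcal{D}}_{T}^{\tau}$, differentiate only in the temporal variable $t$ and act trivially on the spatial variables, while the domain $\Omega$ is a Cartesian product and the weight $\varphi(\tau)$ depends on $\tau$ alone. Hence the full identity should follow by applying the one-dimensional-in-time result slice by slice in $x \in \Lambda_d$ and then integrating over the spatial domain and over the distribution variable $\tau$. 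Since Lemma~\ref{lemma_ehsan} is stated for both regimes $0<2\tau^{min}<2\tau^{max}<1$ and $1<2\tau^{min}<2\tau^{max}<2$, the same reduction will cover both cases here.

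First I would fix $\tau \in (\tau^{min},\tau^{max})$ and use Fubini's theorem to rewrite the inner product over $\Omega$ as an iterated integral in which the temporal integration is innermost, so that for a.e.\ $x \in \Lambda_d$ the inner integrand is $\big(\prescript{}{0}{\mathcal{D}}_{t}^{2\tau} u(\cdot,x),\, v(\cdot,x)\big)_I$. Because $u,v \in \prescript{l,\mathfrak{D}}{}H^{\varphi}(I; L^2(\Lambda_d))$, the slices $u(\cdot,x)$ and $v(\cdot,x)$ lie in $\prescript{l,\mathfrak{D}}{}H^{\varphi}(I)$ for a.e.\ $x$ and inherit the homogeneous initial data $u(\cdot,x)\vert_{t=0}=\tfrac{d}{dt}u(\cdot,x)\vert_{t=0}=0$ built into the space. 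For each such slice, Lemma~\ref{lemma_ehsan} yields $\big(\prescript{}{0}{\mathcal{D}}_{t}^{2\tau} u(\cdot,x),\, v(\cdot,x)\big)_I = \big(\prescript{}{0}{\mathcal{D}}_{t}^{\tau} u(\cdot,x),\, \prescript{}{t}{\mathcal{D}}_{T}^{\tau} v(\cdot,x)\big)_I$. Integrating this pointwise identity over $\Lambda_d$ recovers the equality of the $\Omega$-inner products for fixed $\tau$, and a final integration in $\tau$ against $\varphi(\tau)$ gives \eqref{eq-ehsan-2}.

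The step requiring the most care is the justification of the interchange of the temporal, spatial, and distributional integrations. To invoke Fubini I would verify absolute integrability of the integrands; the estimate of Lemma~\ref{norm_223}, together with the Cauchy--Schwarz/H\"older bounds used in its proof, guarantees that both sides of \eqref{eq-ehsan-2} are finite and dominated by $\Vert u \Vert_{\prescript{l,\mathfrak{D}}{}H^{\varphi}(I; L^2(\Lambda_d))}\,\Vert v \Vert_{\prescript{r,\mathfrak{D}}{}H^{\varphi}(I; L^2(\Lambda_d))}$, which supplies the required domination and in particular ensures $\prescript{}{t}{\mathcal{D}}_{T}^{\tau} v$ is square-integrable. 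If one prefers to sidestep delicate measurability questions about individual slices, the cleanest route is to first establish \eqref{eq-ehsan-2} on a dense subclass --- for instance finite sums of tensor products of smooth temporal functions vanishing to the appropriate order at $t=0$ with smooth spatial functions compactly supported in $\Lambda_d$ --- where the slice-wise application of Lemma~\ref{lemma_ehsan} and the use of Fubini are immediate, and then extend to all of $\prescript{l,\mathfrak{D}}{}H^{\varphi}(I; L^2(\Lambda_d))$ by continuity, using precisely the boundedness of both bilinear forms furnished by Lemma~\ref{norm_223}.
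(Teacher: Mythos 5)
Your proposal is correct and follows essentially the same route as the paper's own proof, which likewise expands the $\Omega$-inner product as an iterated integral, interchanges the spatial, temporal, and distributional integrations, and applies Lemma~\ref{lemma_ehsan} slice-wise in $x\in\Lambda_d$. Your added justification of the Fubini step via the bound of Lemma~\ref{norm_223} (or a density argument) is in fact more careful than the paper's rather terse version.
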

\begin{proof}
	By Lemma \ref{lemma_ehsan},
	\begin{align}
	&
	\int_{\tau^{min}}^{\tau^{max}}\varphi(\tau)\,\big(\prescript{}{0}{\mathcal{D}}_{t}^{2\tau} u, v\big)_{\Omega} d\tau=
	\int_{\tau^{min}}^{\tau^{max}}\varphi(\tau)\, \int_{\Lambda_d}^{}\int_{0}^{T} \vert \prescript{}{0}{\mathcal{D}}_{t}^{2\tau} u \vert \, \vert  v\vert \,dt\, d\Lambda_d \, d\tau
	\nonumber
	\\
	= & \int_{\Lambda_d}^{} \int_{\tau^{min}}^{\tau^{max}}\varphi(\tau)\,\int_{0}^{T} \vert \prescript{}{0}{\mathcal{D}}_{t}^{\tau} u \vert \, \vert \prescript{}{t}{\mathcal{D}}_{T}^{\tau}  v\vert \,dt\, d\tau\, d\Lambda_d
	= \int_{\tau^{min}}^{\tau^{max}}\varphi(\tau)\,\big(\prescript{}{0}{\mathcal{D}}_{t}^{\tau} u, \prescript{}{t}{\mathcal{D}}_{T}^{\tau} v\big)_{\Omega}\, d\tau.
	\end{align}
\end{proof}
%\begin{rembold}
%\label{lemma3.9}
%Following Lemma \ref{norm_2231}, we can show that
%for any $u\in L^2(\Omega)$ and $2\tau \in (0,1),$ there exists a constant $c>0$ such that 
%\begin{equation}
%\label{Lemma3.91}
%\underset{0 \neq v \in L^2(\Omega)}{\sup}\vert(u,v)_{\Omega} \vert \geq c \Vert u \Vert_{L^2(\Omega)} \, \Vert v \Vert_{L^2(\Omega)}, 
%\end{equation}
%assuming $\underset{u \in L^2(\Omega)}{\sup}\vert(u,v)_{\Omega} \vert >0 \quad \forall v \in L^2(\Omega)$.
%\end{rembold}

\subsection{\textbf{Solution and Test Function Spaces}}
\label{Solution and Test Function Spaces}
Take $0<2\tau^{min}<2\tau^{max}<1$ ($1<2\tau^{min}<2\tau^{max}<2$) and $1<2\nu^{min}_i<2\nu^{max}_i<2$ for $i=1,\cdots,d$. We define the solution space
\begin{equation}
\mathcal{B}^{\varphi,\rho_1,\cdots,\rho_d} (\Omega):= \prescript{l,\mathfrak{D}}{}H^{\varphi}\Big(I; L^2(\Lambda_d) \Big) \cap L^2(I; \mathcal{X}_d),
\end{equation}
associated with the norm
\begin{equation}
\label{def_2222}
\Vert u \Vert_{\mathcal{B}^{\varphi,\rho_1,\cdots,\rho_d}(\Omega)} = \Big{\{}\Vert u \Vert_{\prescript{l,\mathfrak{D}}{}H^{\varphi}(I; L^2(\Lambda_d))}^2 + \Vert u \Vert_{L^2(I; \mathcal{X}_d)}^2 \Big{\}}^{\frac{1}{2}}.
\end{equation}
Considering Lemma \ref{norm_221},
\begin{eqnarray}
\label{norm_2221}
\Vert u \Vert_{L^2(I; \mathcal{X}_d)}&=&  \Big{\Vert} \, \Vert u(t,.) \Vert_{\mathcal{X}_d}\,\Big{\Vert}_{L^2(I)}
\nonumber
\\ &=&\bigg{\{}  \sum_{i=1}^{d}  \int_{\nu_i^{min}}^{\nu_i^{max}} \rho_i(\nu_i) \, \Big( \Vert \prescript{}{x_i}{\mathcal{D}}_{b_i}^{\nu_i}\, (u)\Vert_{L^2(\Omega)}^2+\Vert \prescript{}{a_i}{\mathcal{D}}_{x_i}^{\nu_i}\, (u)\Vert_{L^2(\Omega)}^2 \Big)    d\nu_i  + \Vert  u \Vert_{L^2(\Lambda_d)}^2 \bigg{\}}^{\frac{1}{2}}. \quad \quad
\end{eqnarray}
Therefore, from \eqref{norm_222} and \eqref{norm_2221},
\begin{eqnarray}
\label{norm_22213}
\Vert u \Vert_{\mathcal{B}^{\varphi,\rho_1,\cdots,\rho_d}(\Omega)} &=& \Big{\{}  \Vert u \Vert_{L^2(\Omega)}^2 + \int_{\tau^{min}}^{\tau^{max}}\varphi(\tau)\,\Vert \prescript{}{0}{\mathcal{D}}_{t}^{\tau}\, (u)\Vert_{L^2(\Omega)}^2 d\tau 
\nonumber
\\
&&+ \sum_{i=1}^{d}  \int_{\nu_i^{min}}^{\nu_i^{max}} \rho_i(\nu_i) \, \Big( \Vert \prescript{}{x_i}{\mathcal{D}}_{b_i}^{\nu_i}\, (u)\Vert_{L^2(\Omega)}^2+\Vert \prescript{}{a_i}{\mathcal{D}}_{x_i}^{\nu_i}\, (u)\Vert_{L^2(\Omega)}^2 \Big)    d\nu_i  \Big{\}}^{\frac{1}{2}}. 
\end{eqnarray}
Similarly, we define the test space
\begin{equation}
\mathfrak{B}^{\varphi,\rho_1,\cdots,\rho_d} (\Omega) := \prescript{r,\mathfrak{D}}{}H^{\varphi}\Big(I; L^2(\Lambda_d)\Big) \cap L^2(I; \mathcal{X}_d),
\end{equation}
equipped with the norm
\begin{align}
\Vert v \Vert_{\mathfrak{B}^{\varphi,\rho_1,\cdots,\rho_d}(\Omega)} =& \Big{\{}\Vert v \Vert_{\prescript{r}{}H^{\varphi}(I; L^2(\Lambda_d))}^2  + \Vert v \Vert_{ L^2(I; \mathcal{X}_d)}^2 \Big{\}}^{\frac{1}{2}}
\nonumber
\\
=& \Big{\{}  \Vert v \Vert_{L^2(\Omega)}^2 + \int_{\tau^{min}}^{\tau^{max}}\varphi(\tau)\,\Vert \prescript{}{t}{\mathcal{D}}_{T}^{\tau}\, (v)\Vert_{L^2(\Omega)}^2 d\tau 
\nonumber
\\
&+ \sum_{i=1}^{d}  \int_{\nu_i^{min}}^{\nu_i^{max}} \rho_i(\nu_i) \, \Big( \Vert \prescript{}{x_i}{\mathcal{D}}_{b_i}^{\nu_i}\, (v)\Vert_{L^2(\Omega)}^2+\Vert \prescript{}{a_i}{\mathcal{D}}_{x_i}^{\nu_i}\, (v)\Vert_{L^2(\Omega)}^2 \Big)    d\nu_i  \Big{\}}^{\frac{1}{2}}
\end{align}
by Lemma \eqref{norm_221} and \eqref{norm_222}. Take $\Omega =  I\times \Lambda_d$ for a positive integer  $d$. The Petrov-Galerkin spectral method reads as: find $u\in \mathcal{B}^{\varphi,\rho_1,\cdots,\rho_d} (\Omega)$ such that
\begin{eqnarray}
\label{Eq: general weak form}
a(u,v) = l(v), \quad \forall v \in \mathfrak{B}^{\varphi,\rho_1,\cdots,\rho_d} (\Omega),
\end{eqnarray}
where the functional $l(v)=(f,v)_{\Omega} $ and
\begin{align}
\label{Eq: general weak form_2}
a(u,v)&=\int_{\tau^{min}}^{\tau^{max}} \varphi(\tau) \, (\prescript{}{0}{\mathcal{D}}_{t}^{\tau}\, u, \prescript{}{t}{\mathcal{D}}_{T}^{\tau}\, v )_{\Omega} d\tau  
\\ \nonumber
&+
\sum_{i=1}^{d} \int_{\mu_i^{min}}^{\mu_i^{max}} \varrho_i(\mu_i) \, \Big(  c_{l_i} ( \prescript{}{a_i}{\mathcal{D}}_{x_i}^{\mu_i}\, u,\, \prescript{}{x_i}{\mathcal{D}}_{b_i}^{\mu_i}\, v )_{\Omega}  
+ c_{r_i}  ( \prescript{}{a_i}{\mathcal{D}}_{x_i}^{\mu_i}\, v,\, \prescript{}{x_i}{\mathcal{D}}_{b_i}^{\mu_i}\, u )_{\Omega} \,\Big) d\mu_i
\\ \nonumber
&-\sum_{j=1}^{d}\int_{\nu_j^{min}}^{\nu_j^{max}} \rho_j(\nu_j) \, \Big( k_{l_j}  ( \prescript{}{a_j}{\mathcal{D}}_{x_j}^{\nu_j}\, u,\, \prescript{}{x_j}{\mathcal{D}}_{b_j}^{\nu_j}\, v )_{\Omega}+k_{r_j}  ( \prescript{}{a_j}{\mathcal{D}}_{x_j}^{\nu_j}\, v,\, \prescript{}{x_j}{\mathcal{D}}_{b_j}^{\nu_j}\, u )_{\Omega}\Big) d\nu_j
\\ \nonumber
&+\gamma 
(u,v)_{\Omega}
\end{align}
%is equal to the variational form \eqref{Eq: general weak form} when solution $u$ is smooth enough. 
following \eqref{ddd1}, \eqref{ddd2} and Lemma \ref{lem_generalize_2} and $\gamma, c_{l_i}, \, c_{r_i}, \, \kappa_{l_i},$ and  $\kappa_{r_i}$ are all constant. Besides,  $0<2\tau^{min}<2\tau^{max}<1$ ($1<2\tau^{min}<2\tau^{max}<2$), $1<2\mu^{min}_i<2\mu^{max}_i<2$ and $1<2\nu^{min}_j<2\nu^{max}_j<2$ for $i,j=1,2,\cdots,d$.

\begin{rem}
	%In case $\tau < \frac{1}{2}$,  we assume that $u$ posses enough regularity (see \cite{gorenflo2015time}) to ensure the equivalence between the problem under the strong formulation and the bilinear form.
	In the case $\tau<\tfrac 12$, additional regularity assumptions are required to ensure equivalence between the weak and strong formulations, see [23] for more details.
\end{rem}

$U_N$ and $V_N$ are chosen as the finite-dimensional subspaces of $\mathcal{B}^{\varphi,\rho_1,\cdots,\rho_d}(\Omega)$ and $\mathfrak{B}^{\varphi,\rho_1,\cdots,\rho_d}(\Omega)$, respectively.
Then, the PG scheme reads as: find $u_N \in U_N$ such that
\begin{eqnarray}
\label{Eq: infinit-dim PG method_1111}
a(u_N,v_N) = l(v_N), \quad \forall v \in V_N,
\end{eqnarray} 
where
\begin{align}
\label{Eq: infinit-dim PG method_1122}
a(u_N,v_N)&=
\int_{\tau^{min}}^{\tau^{max}} \varphi(\tau) \, (\prescript{}{0}{\mathcal{D}}_{t}^{\tau}\, u_N, \prescript{}{t}{\mathcal{D}}_{T}^{\tau}\, v_N )_{\Omega} d\tau  
\nonumber\\
\nonumber
&+
\sum_{i=1}^{d} \int_{\mu_i^{min}}^{\mu_i^{max}} \varrho_i(\mu_i) \, \Big{[}  c_{l_i} ( \prescript{}{a_i}{\mathcal{D}}_{x_i}^{\mu_i}\, u_N,\, \prescript{}{x_i}{\mathcal{D}}_{b_i}^{\mu_i}\, v_N )_{\Omega}  
+ c_{r_i}  ( \prescript{}{a_i}{\mathcal{D}}_{x_i}^{\mu_i}\, u_N,\, \prescript{}{x_i}{\mathcal{D}}_{b_i}^{\mu_i}\, v_N )_{\Omega} \,\Big{]} d\mu_i
\\
&-\sum_{j=1}^{d}\int_{\nu_j^{min}}^{\nu_j^{max}} \rho_j(\nu_j) \, \Big{[} k_{l_j}  ( \prescript{}{a_j}{\mathcal{D}}_{x_j}^{\nu_j}\, u_N,\, \prescript{}{x_j}{\mathcal{D}}_{b_j}^{\nu_j}\, v_N )_{\Omega}+k_{r_j}  ( \prescript{}{a_j}{\mathcal{D}}_{x_j}^{\nu_j}\, u_N,\, \prescript{}{x_j}{\mathcal{D}}_{b_j}^{\nu_j}\, v_N )_{\Omega}\Big{]} d\nu_j
\nonumber
\\
&
+\gamma 
(u_N,v_N)_{\Omega}. \quad \quad
\end{align}
Representing $u_N$ as a linear combination of elements in $U_N$,  the finite-dimensional problem \eqref{Eq: infinit-dim PG method_1122} leads to a linear system, known as Lyapunov system, introduced in Section \ref{Sec: method}.

\subsection{\textbf{Well-posedness Analysis}}
\label{Sec: Stability and Convergence of PG}
The following assumption permit us to prove the uniqueness of the weak form of the problem in \eqref{Eq: general weak form} in Theorem \ref{Thm: Well-Posedness_1D}. 
\begin{assum}
	\label{Assum: sup cond}
	For all $v \in \mathfrak{B}^{\varphi,\rho_1,\cdots,\rho_d} (\Omega)$ 
	\begin{align*}
	& \underset{u \in \mathcal{B}^{\varphi,\rho_1,\cdots,\rho_d} (\Omega)}{sup} \int_{\tau^{min}}^{\tau^{max}} \varphi(\tau) \,\vert(\prescript{}{0}{\mathcal{D}}_{t}^{\tau}\, u, \prescript{}{t}{\mathcal{D}}_{T}^{\tau}\, v )_{\Omega} \vert d\tau\,>\,0,
	\\
	& \underset{u \in \mathcal{B}^{\varphi,\rho_1,\cdots,\rho_d} (\Omega)}{sup} \int_{\nu_j^{min}}^{\nu_j^{max}} \rho_j(\nu_j) \,\Big(\vert   ( \prescript{}{a_j}{\mathcal{D}}_{x_j}^{\nu_j}\, u,\, \prescript{}{x_j}{\mathcal{D}}_{b_j}^{\nu_j}\, v )_{\Omega}  \vert
	+ \vert  ( \prescript{}{x_j}{\mathcal{D}}_{b_j}^{\nu_j}\, u,\, \prescript{}{a_j}{\mathcal{D}}_{x_j}^{\nu_j}\, v )_{\Omega}  \vert \Big) d\nu_j\, > \, 0,
	\\
	&
	\underset{u \in \mathcal{B}^{\varphi,\rho_1,\cdots,\rho_d} (\Omega)}{sup} \vert (u,v)_{\Omega}\vert \, > \, 0,
	\end{align*}
	when $j=1,\cdots,d$.
	
\end{assum}

\begin{lem}
	\label{continuity_lem}
	\textbf{(Continuity)} Let Assumption \ref{Assum: sup cond} holds. The bilinear form in \eqref{Eq: general weak form_2} is continuous, i.e., for $u \in \mathcal{B}^{\varphi,\rho_1,\cdots,\rho_d} (\Omega)$,
	\begin{equation}
	\label{continuity_eq}
	\exists \beta > 0, \, \,\, \vert a(u,v)\vert \leq \beta \, \Vert u \Vert_{\mathcal{B}^{\varphi,\rho_1,\cdots,\rho_d}(\Omega)}\Vert v \Vert_{\mathfrak{B}^{\varphi,\rho_1,\cdots,\rho_d}(\Omega)} \,\, \, \forall v \in \mathfrak{B}^{\varphi,\rho_1,\cdots,\rho_d}(\Omega).
	\end{equation}
\end{lem}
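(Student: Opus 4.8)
The plan is to bound $\vert a(u,v)\vert$ group by group, matching each of the four sums in \eqref{Eq: general weak form_2} against the corresponding piece of the product $\Vert u \Vert_{\mathcal{B}^{\varphi,\rho_1,\cdots,\rho_d}(\Omega)}\,\Vert v \Vert_{\mathfrak{B}^{\varphi,\rho_1,\cdots,\rho_d}(\Omega)}$, and then collecting all the resulting constants into a single $\beta$. First I would apply the triangle inequality to \eqref{Eq: general weak form_2}, which splits the estimate into the temporal integral, the $\mu_i$-group, the $\nu_j$-group, and the reaction term $\gamma(u,v)_{\Omega}$. The positivity guaranteed by Assumption \ref{Assum: sup cond} ensures each pairing is nondegenerate, but the continuity bound itself is just the upper (Cauchy--Schwarz/H\"{o}lder) half of the argument and needs no lower bound.

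The temporal term is already done for us: Lemma \ref{norm_223} yields
\[
\int_{\tau^{min}}^{\tau^{max}}\varphi(\tau)\,\big\vert(\prescript{}{0}{\mathcal{D}}_{t}^{\tau} u,\, \prescript{}{t}{\mathcal{D}}_{T}^{\tau} v)_{\Omega}\big\vert\,d\tau
\;\leq\; \Vert u \Vert_{\prescript{l,\mathfrak{D}}{}H^{\varphi}(I;L^2(\Lambda_d))}\,\Vert v \Vert_{\prescript{r,\mathfrak{D}}{}H^{\varphi}(I;L^2(\Lambda_d))},
\]
and by the explicit form \eqref{norm_22213} of the norms the two factors are dominated by $\Vert u \Vert_{\mathcal{B}^{\varphi,\rho_1,\cdots,\rho_d}(\Omega)}$ and $\Vert v \Vert_{\mathfrak{B}^{\varphi,\rho_1,\cdots,\rho_d}(\Omega)}$ respectively. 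The reaction term is the easiest: a single Cauchy--Schwarz gives $\vert\gamma\vert\,\Vert u\Vert_{L^2(\Omega)}\,\Vert v\Vert_{L^2(\Omega)}$, again dominated by the two norms since each contains the $L^2(\Omega)$ part.

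For the spatial groups I would avoid invoking the full equivalence \eqref{equiv_space} (whose lower half belongs to the coercivity analysis) and instead reuse only the upper estimate behind it and behind Lemma \ref{norm_223}: a pointwise Cauchy--Schwarz in $(t,x)$ followed by a weighted H\"{o}lder inequality in the order variable. For a representative straight pairing this reads
\[
\int_{\mu_i^{min}}^{\mu_i^{max}}\!\varrho_i(\mu_i)\,\big\vert(\prescript{}{a_i}{\mathcal{D}}_{x_i}^{\mu_i} u,\, \prescript{}{x_i}{\mathcal{D}}_{b_i}^{\mu_i} v)_{\Omega}\big\vert\, d\mu_i
\leq \Big(\int_{\mu_i^{min}}^{\mu_i^{max}}\!\varrho_i\,\Vert \prescript{}{a_i}{\mathcal{D}}_{x_i}^{\mu_i} u\Vert_{L^2(\Omega)}^2\, d\mu_i\Big)^{\frac{1}{2}}\Big(\int_{\mu_i^{min}}^{\mu_i^{max}}\!\varrho_i\,\Vert \prescript{}{x_i}{\mathcal{D}}_{b_i}^{\mu_i} v\Vert_{L^2(\Omega)}^2\, d\mu_i\Big)^{\frac{1}{2}},
\]
and the crossed pairings $(\prescript{}{a_i}{\mathcal{D}}_{x_i}^{\mu_i} v,\prescript{}{x_i}{\mathcal{D}}_{b_i}^{\mu_i} u)_{\Omega}$ are estimated identically with the roles of $u$ and $v$ swapped; the $\nu_j$-group is handled the same way with $\rho_j$ replacing $\varrho_i$. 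The $\rho_j$-weighted factors are exactly the seminorms appearing in \eqref{norm_22213}, so they are bounded by $\Vert u\Vert_{\mathcal{B}}$ and $\Vert v\Vert_{\mathfrak{B}}$ directly. Summing all contributions and taking $\beta$ proportional to $\max\{1,\vert\gamma\vert,\vert c_{l_i}\vert,\vert c_{r_i}\vert,\vert k_{l_j}\vert,\vert k_{r_j}\vert\}$ then gives \eqref{continuity_eq}.

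The step I expect to be the main obstacle is controlling the advection group (the $\mu_i$-integrals, weighted by $\varrho_i$ over $[\mu_i^{min},\mu_i^{max}]$), because the solution and test norms are built \emph{only} from the dispersion distributions $\rho_i$ over $[\nu_i^{min},\nu_i^{max}]$, so the $\varrho_i$-weighted seminorm produced above is not literally a summand of $\Vert u\Vert_{\mathcal{B}}$. Here I would exploit that in the weak formulation every spatial order satisfies $1<2\mu_i,\,2\nu_i<2$, i.e.\ all orders lie in $(\tfrac12,1)$: using the norm equivalences \eqref{eq14} together with the positivity and $L^1$-integrability of both $\varrho_i$ and $\rho_i$, each $\varrho_i$-weighted distributed seminorm of orders in $(\tfrac12,1)$ is bounded by the top-order $H^{\sigma}$-norm, hence by the $\rho_i$-weighted seminorm plus the $L^2(\Omega)$ norm that already sits inside $\Vert\cdot\Vert_{\mathcal{B}}$ and $\Vert\cdot\Vert_{\mathfrak{B}}$. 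Once this domination is recorded, everything else is routine bookkeeping.
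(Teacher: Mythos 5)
Your proposal is correct and follows the same basic route as the paper, whose entire proof is the single sentence ``It follows from \eqref{equiv_space} and Lemma \ref{norm_223}'': the temporal pairing is controlled by Lemma \ref{norm_223}, the spatial pairings by the upper (Cauchy--Schwarz/H\"{o}lder) half of the estimate behind \eqref{equiv_space}, and the reaction term by Cauchy--Schwarz. What you add --- and what the paper silently skips --- is the observation that the $\varrho_i$-weighted advection seminorms are not literal summands of $\Vert\cdot\Vert_{\mathcal{B}^{\varphi,\rho_1,\cdots,\rho_d}(\Omega)}$ in \eqref{norm_22213}, so an extra domination step is required. Your fix via the equivalences \eqref{eq14} is the right idea, but two caveats deserve a line in the write-up: (i) dominating an order-$\mu_i$ seminorm by a $\rho_i$-weighted quantity built from orders $\nu_i$ requires $\mu_i\le\nu_i$, since interpolation only goes downward in the order; this is guaranteed by the strong-form ranges $2\mu_i\in(0,1]$, $2\nu_j\in(1,2]$, but not by the restated weak-form ranges in which both families of orders live in $(\tfrac12,1)$, so you should say which convention you are using; and (ii) the equivalence constants in \eqref{eq14} depend on the order and degenerate as the order approaches $n-\tfrac12$, so one needs them uniform over the compact order interval before pulling them outside the $\varrho_i$-integral. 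Neither caveat is fatal, and your argument is more complete than the paper's own.
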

\begin{proof}
	It follows from \eqref{equiv_space} and Lemma \ref{norm_223}.
	%With the aid of \eqref{equiv_space} and lemma \ref{norm_223}, we directly conclude \eqref{continuity_eq}.
\end{proof}	
\begin{thm}
	\label{inf_sup_d_lem}
	Let Assumption \ref{Assum: sup cond} holds. The \text{inf-sup} condition of the bilinear form \eqref{Eq: general weak form_2} for any $d \geq 1$ holds with $\beta > 0$, i.e.,
	\begin{eqnarray}
	\label{Eq: inf sup-time_d_well}
	&&%\sup_{v \in \mathfrak{B}^{\tau,\nu_1,\cdots,\nu_d}(\Omega)}
	\underset{0 \neq u \in \mathcal{B}^{\varphi,\rho_1,\cdots,\rho_d} (\Omega)} {\inf} \,\,\underset{0 \neq v \in\mathfrak{B}^{\varphi,\rho_1,\cdots,\rho_d} (\Omega)}{\sup}
	\frac{\vert a(u , v)\vert}{\Vert v\Vert_{\mathfrak{B}^{\varphi,\rho_1,\cdots,\rho_d}(\Omega)}\Vert u\Vert_{\mathcal{B}^{\varphi,\rho_1,\cdots,\rho_d}}(\Omega)} \geq \beta > 0, \quad 
	%\\
	%\nonumber
	%&&\forall u \in \mathcal{B}^{\tau,\nu_1,\cdots,\nu_d} (\Omega) \,\, and \,\, \forall v \in \mathfrak{B}^{\tau,\nu_1,\cdots,\nu_d} (\Omega)
	%%
	\end{eqnarray}
	where $\Omega = I \times \Lambda_d$.
\end{thm}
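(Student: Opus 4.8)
The plan is to verify the inf-sup condition through the Banach--Ne\v{c}as--Babu\v{s}ka framework, constructing for each fixed $0\neq u\in\mathcal{B}^{\varphi,\rho_1,\cdots,\rho_d}(\Omega)$ an explicit companion $v_u\in\mathfrak{B}^{\varphi,\rho_1,\cdots,\rho_d}(\Omega)$ that realizes the supremum up to a constant. Concretely, I would seek $v_u$ satisfying a coercivity bound $a(u,v_u)\geq\beta_1\,\Vert u\Vert_{\mathcal{B}^{\varphi,\rho_1,\cdots,\rho_d}(\Omega)}^2$ and a boundedness bound $\Vert v_u\Vert_{\mathfrak{B}^{\varphi,\rho_1,\cdots,\rho_d}(\Omega)}\leq\beta_2\,\Vert u\Vert_{\mathcal{B}^{\varphi,\rho_1,\cdots,\rho_d}(\Omega)}$ with $\beta_1,\beta_2>0$ independent of $u$, so that
\[
\underset{0\neq v}{\sup}\,\frac{\vert a(u,v)\vert}{\Vert v\Vert_{\mathfrak{B}^{\varphi,\rho_1,\cdots,\rho_d}(\Omega)}}\,\geq\,\frac{\vert a(u,v_u)\vert}{\Vert v_u\Vert_{\mathfrak{B}^{\varphi,\rho_1,\cdots,\rho_d}(\Omega)}}\,\geq\,\frac{\beta_1}{\beta_2}\,\Vert u\Vert_{\mathcal{B}^{\varphi,\rho_1,\cdots,\rho_d}(\Omega)},
\]
and taking the infimum over $u$ delivers \eqref{Eq: inf sup-time_d_well} with $\beta=\beta_1/\beta_2$.

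The two structural ingredients I would rely on match the two groups of seminorms appearing in the $\mathcal{B}$-norm \eqref{norm_22213}. For the temporal part, Lemma \ref{norm_2231} already supplies an explicit companion (the transform $\mathcal{V}_u$ built from the Heaviside reflection about $t=T$) which lies in the test space, enforces the terminal condition, and bounds the temporal bilinear term below by $\tilde\beta\,\vert u\vert_{{^{l,\mathfrak{D}}}H^{\varphi}(I;L^2(\Lambda_d))}^2$ while satisfying $\Vert\mathcal{V}_u\Vert_{{^{r,\mathfrak{D}}}H^{\varphi}(I;L^2(\Lambda_d))}\cong\vert u\vert_{{^{l,\mathfrak{D}}}H^{\varphi}(I;L^2(\Lambda_d))}$ by the estimate \eqref{lemma3.7.1} established in its proof. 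For the spatial part, the crucial observation is that the dispersion terms carrying the distributions $\rho_j$ (orders $\nu_j$) are exactly the terms entering the $\mathcal{B}$-norm, and they appear with a \emph{negative} sign in \eqref{Eq: general weak form_2}; combined with the equivalence \eqref{equiv_space} under Assumption \ref{assum 1}, this sign renders them coercive, so their signed contribution is bounded below by $\sum_{j}\int_{\nu_j^{min}}^{\nu_j^{max}}\rho_j(\nu_j)\big(\Vert\prescript{}{x_j}{\mathcal{D}}_{b_j}^{\nu_j}u\Vert_{L^2(\Omega)}^2+\Vert\prescript{}{a_j}{\mathcal{D}}_{x_j}^{\nu_j}u\Vert_{L^2(\Omega)}^2\big)\,d\nu_j$.

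Assembling these two estimates into a single companion is where I expect to organize the argument by induction on the spatial dimension $d$, exploiting the nested decomposition $\mathcal{X}_d={^{\mathfrak{D}}}H^{\rho_d}\big((a_d,b_d);L^2(\Lambda_{d-1})\big)\cap L^2\big((a_d,b_d);\mathcal{X}_{d-1}\big)$ that underlies Lemma \ref{norm_221}: the $d$-dimensional estimate would follow by pairing the temporal companion with the $(d-1)$-dimensional spatial inf-sup and the newly added $x_d$-direction, so that summing over all contributions yields $a(u,v_u)\geq\beta_1\big(\vert u\vert_{{^{l,\mathfrak{D}}}H^{\varphi}(I;L^2(\Lambda_d))}^2+\Vert u\Vert_{L^2(I;\mathcal{X}_d)}^2\big)=\beta_1\,\Vert u\Vert_{\mathcal{B}^{\varphi,\rho_1,\cdots,\rho_d}(\Omega)}^2$. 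The boundedness bound $\Vert v_u\Vert_{\mathfrak{B}^{\varphi,\rho_1,\cdots,\rho_d}(\Omega)}\lesssim\Vert u\Vert_{\mathcal{B}^{\varphi,\rho_1,\cdots,\rho_d}(\Omega)}$ would then come from the norm-equivalences in the proof of Lemma \ref{norm_2231} together with the continuity established in Lemma \ref{continuity_lem}.

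The hard part will be making one companion deliver the lower bound for the temporal seminorm and for all $d$ spatial seminorms \emph{simultaneously}, since the temporal transform $\mathcal{V}_u$ does not manifestly reproduce the spatial seminorms when tested against the two-sided space operators; this is what forces either the inductive reduction above or a verification that the temporal--spatial cross contributions are of lower order and absorbable. The second delicate point is the advection group carrying the distributions $\varrho_i$ (orders $\mu_i$) together with the reaction term $\gamma(u,v_u)_\Omega$: these do \emph{not} enter the $\mathcal{B}$-norm, so they cannot contribute to the lower bound and must be prevented from spoiling it. I would control them using the continuity estimate \eqref{continuity_eq}, the equivalence \eqref{equiv_space}, and the positivity hypotheses of Assumption \ref{Assum: sup cond}, absorbing them into the dominant temporal and $\rho_j$-dispersion terms.
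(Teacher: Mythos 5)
Your overall framework (Banach--Ne\v{c}as--Babu\v{s}ka via an explicit companion $v_u$ with coercivity and boundedness bounds) is a legitimate way to prove an inf-sup condition, but it is not the route the paper takes, and the two points you yourself flag as ``the hard part'' are precisely where your plan does not close. First, your proposed companion is built from the temporal transform $\mathcal{V}_u$ of Lemma \ref{norm_2231}; but $\mathcal{V}_u$ subtracts $u\vert_{t=T}$ and is therefore not equal to $u$ in the spatial arguments, so the claim that the $\rho_j$-dispersion group tested against $v_u$ is bounded below by $\sum_j\int\rho_j(\nu_j)\big(\Vert\prescript{}{x_j}{\mathcal{D}}_{b_j}^{\nu_j}u\Vert_{L^2(\Omega)}^2+\Vert\prescript{}{a_j}{\mathcal{D}}_{x_j}^{\nu_j}u\Vert_{L^2(\Omega)}^2\big)\,d\nu_j$ does not follow from the sign of those terms alone; this is exactly the simultaneity problem you name, and neither the proposed induction on $d$ (which in the paper is used only for the norm equivalence of Lemma \ref{norm_221}, not for the inf-sup) nor the nested structure of $\mathcal{X}_d$ resolves it. Second, your treatment of the $\varrho_i$-advection group and the reaction term is not viable as stated: for $\mu_i\in(\tfrac12,1)$ the pairing $(\prescript{}{a_i}{\mathcal{D}}_{x_i}^{\mu_i}u,\prescript{}{x_i}{\mathcal{D}}_{b_i}^{\mu_i}u)_{\Omega}$ behaves like $\cos(\pi\mu_i)$ times a squared seminorm, i.e.\ it is negative, it enters $a(\cdot,\cdot)$ with a \emph{positive} coefficient $c_{l_i}$, and the paper imposes no smallness of $c_{l_i}$ relative to $\kappa_{l_j}$; the continuity estimate \eqref{continuity_eq} only gives an upper bound and cannot ``absorb'' a term of the same order with the wrong sign into the dispersion group. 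So the absorption step would fail without an additional hypothesis.

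For contrast, the paper's proof avoids constructing any single companion. Under Assumption \ref{Assum: sup cond} it asserts the equivalence \eqref{equiv-bilinear2}, namely that $\vert a(u,v)\vert$ is equivalent to the \emph{sum of the absolute values} of the reaction, temporal, advection, and dispersion contributions (so cancellation between groups is ruled out by hypothesis rather than by estimate); it then lower-bounds the temporal quotient by Lemma \ref{norm_2231}, the dispersion quotient by \eqref{equiv_space} together with Theorem 4.3 of the cited reference, simply drops the nonnegative advection contribution, and sums in \eqref{thm123} to recover the full norm \eqref{norm_22213}. Whether one finds that assumption-driven step fully satisfying or not, it is the mechanism that replaces both of the gaps in your plan, and your proposal as written does not supply a substitute for it.
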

\begin{proof}
	For $u \in \mathcal{B}^{\varphi,\rho_1,\cdots,\rho_d} (\Omega)$ and $v \in  \mathfrak{B}^{\varphi,\rho_1,\cdots,\rho_d} (\Omega)$ under Assumption \ref{Assum: sup cond},
	\begin{eqnarray}
	\label{equiv-bilinear2}
	\vert a(u,v)\vert &\cong& \vert (u,v)_{\Omega}\vert+\int_{\tau^{min}}^{\tau^{max}} \varphi(\tau) \,\vert(\prescript{}{0}{\mathcal{D}}_{t}^{\tau}\, u, \prescript{}{t}{\mathcal{D}}_{T}^{\tau}\, v )_{\Omega} \vert d\tau
	\nonumber
	\\ 
	&&+
	\sum_{i=1}^{d} \int_{\mu_i^{min}}^{\mu_i^{max}} \rho_i(\mu_i) \, \Big(\vert ( \prescript{}{a_i}{\mathcal{D}}_{x_i}^{\mu_i}\, u,\, \prescript{}{x_i}{\mathcal{D}}_{b_i}^{\mu_i}\, v )_{\Omega} \vert + \vert  ( \prescript{}{x_i}{\mathcal{D}}_{a_i}^{\mu_i}\, u,\, \prescript{}{a_i}{\mathcal{D}}_{x_i}^{\mu_i}\, v )_{\Omega} \vert \Big) d\mu_i
	\nonumber
	\\ 
	&&
	+
	\sum_{j=1}^{d} \int_{\nu_j^{min}}^{\nu_j^{max}} \rho_j(\nu_j) \, \Big(\vert   ( \prescript{}{a_j}{\mathcal{D}}_{x_j}^{\nu_j}\, u,\, \prescript{}{x_j}{\mathcal{D}}_{b_j}^{\nu_j}\, v )_{\Omega}  \vert + \vert  ( \prescript{}{x_j}{\mathcal{D}}_{b_j}^{\nu_j}\, u,\, \prescript{}{a_j}{\mathcal{D}}_{x_j}^{\nu_j}\, v )_{\Omega}  \vert \Big)d\nu_j.  \quad \quad
	\end{eqnarray}
	Following \eqref{equiv_space} and Theorem 4.3 in \cite{samiee2017unified1}, 
	\begin{eqnarray}
	&&\sum_{i=1}^{d} \int_{\nu_i^{min}}^{\nu_i^{max}} \rho_i(\nu_i) \, \Big(\vert (\prescript{}{a_i}{\mathcal{D}}_{x_i}^{\nu_i}\, (u),\prescript{}{x_i}{\mathcal{D}}_{b_i}^{\nu_i}\, (v))_{\Omega} \vert 
	+\vert (\prescript{}{x_i}{\mathcal{D}}_{b_i}^{\nu_i}\, (u),\prescript{}{a_i}{\mathcal{D}}_{x_i}^{\nu_i}\, (v))_{\Omega} \vert \Big)
	\nonumber
	\\
	&&\geq \tilde{C}_1
	\sum_{i=1}^{d} \Big{[} \int_{\nu_i^{min}}^{\nu_i^{max}} \rho_i(\nu_i) \, \Big(\Vert \prescript{}{a_i}{\mathcal{D}}_{x_i}^{\nu_i}\, (u) \Vert_{L^2(\Omega)}\Big) d\nu_i \, \int_{\nu_i^{min}}^{\nu_i^{max}} \rho_i(\nu_i) \,\Big(\Vert \prescript{}{x_i}{\mathcal{D}}_{b_i}^{\nu_i}\, (v)\Vert_{L^2(\Omega)}\Big) d\nu_i 
	\nonumber
	\\
	&&+\int_{\nu_i^{min}}^{\nu_i^{max}} \rho_i(\nu_i)  \, \Big(\Vert \prescript{}{x_i}{\mathcal{D}}_{b_i}^{\nu_i}\, (u) \Vert_{L^2(\Omega)}\Big) d\nu_i \,  \int_{\nu_i^{min}}^{\nu_i^{max}} \rho_i(\nu_i) \, \Big(\Vert \prescript{}{a_i}{\mathcal{D}}_{x_i}^{\nu_i}\, (v)\Vert_{L^2(\Omega)}\Big) d\nu_i  \Big{]}.
	\nonumber
	\end{eqnarray}
	Thus,
	\begin{eqnarray}
	\label{qqqq}
	&&\sum_{i=1}^{d} \int_{\nu_i^{min}}^{\nu_i^{max}} \rho_i(\nu_i) \, \Big(\vert (\prescript{}{a_i}{\mathcal{D}}_{x_i}^{\nu_i}\, (u),\prescript{}{x_i}{\mathcal{D}}_{b_i}^{\nu_i}\, (v))_{\Omega} \vert 
	+\vert (\prescript{}{x_i}{\mathcal{D}}_{b_i}^{\nu_i}\, (u),\prescript{}{a_i}{\mathcal{D}}_{x_i}^{\nu_i}\, (v))_{\Omega} \vert \Big) d\nu_i
	\nonumber
	\\
	&&\geq \tilde{C}_1  \sum_{i=1}^{d} \int_{\nu_i^{min}}^{\nu_i^{max}} \rho_i(\nu_i) \,\Big(\Vert \prescript{}{a_i}{\mathcal{D}}_{x_i}^{\nu_i}\, (u) \Vert_{L^2(\Omega)}  + \Vert \prescript{}{x_i}{\mathcal{D}}_{b_i}^{\nu_i}\, (u) \Vert_{L^2(\Omega)} \Big) d\nu_i
	\nonumber
	\\
	&&\times \sum_{j=1}^{d} \int_{\nu_j^{min}}^{\nu_j^{max}} \rho_j(\nu_j) \,\Big( \Vert \prescript{}{x_j}{\mathcal{D}}_{b_j}^{\nu_j}\, (v)\Vert_{L^2(\Omega)}, + \Vert \prescript{}{a_j}{\mathcal{D}}_{x_j}^{\nu_j}\, (v)\Vert_{L^2(\Omega)}\Big)d\nu_j
	\nonumber
	\\
	&&= \tilde{C}_1  \vert u \vert_{L^2(I; \mathcal{X}_d)} \, \vert v \vert_{L^2(I; \mathcal{X}_d)}.
	\end{eqnarray}
	where $\tilde{C}_1$ is a positive constant and independent of $u$. 
	Considering Lemma \ref{norm_2231}, there exists a positive constant $\tilde{C}_2>0$ and independent of $u$ such that
	\begin{equation}
	\label{qqqq2}
	\underset{0 \neq v \in\mathfrak{B}^{\varphi,\rho_1,\cdots,\rho_d} (\Omega)}{\sup}
	\frac{
		\int_{\tau^{min}}^{\tau^{max}} \varphi(\tau) \,\vert (\prescript{}{0}{\mathcal{D}}_{t}^{\tau}(u),\prescript{}{t}{\mathcal{D}}_{T}^{\tau}(v))_{\Omega} \vert d\tau }{\vert v \vert_{\prescript{r,\mathfrak{D}}{}H^{\varphi}(I;L^2(\Lambda_d))}}
	\geq 
	\tilde{C}_2 \vert u \vert_{\prescript{l,\mathfrak{D}}{}H^{\varphi}(I; L^2(\Lambda_d))}.
	\end{equation}
	Furthermore, for $u \in \mathcal{B}^{\varphi,\rho_1,\cdots,\rho_d} (\Omega)$
	\begin{eqnarray}
	\label{equiv22}
	\underset{0 \neq v \in\mathfrak{B}^{\varphi,\cdots,\rho_d} (\Omega)}{\sup}
	\frac{
		\int_{\tau^{min}}^{\tau^{max}} \varphi(\tau) \,\vert (\prescript{}{0}{\mathcal{D}}_{t}^{\tau}(u),\prescript{}{t}{\mathcal{D}}_{T}^{\tau}(v))_{\Omega} \vert d\tau }{\vert v \vert_{\prescript{r,\mathfrak{D}}{}H^{\varphi}(I;L^2(\Lambda_d))}} \cong \underset{0 \neq v \in\mathfrak{B}^{\varphi,\cdots,\rho_d} (\Omega)}{\sup}
	\frac{
		\int_{\tau^{min}}^{\tau^{max}} \varphi(\tau) \,\vert (\prescript{}{0}{\mathcal{D}}_{t}^{\tau}(u),\prescript{}{t}{\mathcal{D}}_{T}^{\tau}(v))_{\Omega} \vert d\tau }{\vert v \vert_{\mathcal{B}^{\varphi,\rho_1,\cdots,\rho_d} (\Omega)}}
	\end{eqnarray}
	and
	\begin{eqnarray}
	\label{equiv23}
	&&\qquad \underset{0 \neq v \in\mathfrak{B}^{\varphi,\rho_1,\cdots,\rho_d} (\Omega)}{\sup} \frac{
		\sum_{j=1}^{d} \int_{\nu_j^{min}}^{\nu_j^{max}} \rho_j(\nu_j) \, \Big(\vert   ( \prescript{}{a_j}{\mathcal{D}}_{x_j}^{\nu_j}\, u,\, \prescript{}{x_j}{\mathcal{D}}_{b_j}^{\nu_j}\, v )_{\Omega}  \vert + \vert  ( \prescript{}{x_j}{\mathcal{D}}_{b_j}^{\nu_j}\, u,\, \prescript{}{a_j}{\mathcal{D}}_{x_j}^{\nu_j}\, v )_{\Omega}  \vert \Big)d\nu_j}{\Vert v\Vert_{L^2(I; \mathcal{X}_d)}}
	\nonumber
	\\
	&& \qquad \qquad \cong \underset{0 \neq v \in\mathfrak{B}^{\varphi,\rho_1,\cdots,\rho_d} (\Omega)}{\sup}  \frac{
		\sum_{j=1}^{d} \int_{\nu_j^{min}}^{\nu_j^{max}} \rho_j(\nu_j) \, \Big(\vert   ( \prescript{}{a_j}{\mathcal{D}}_{x_j}^{\nu_j}\, u,\, \prescript{}{x_j}{\mathcal{D}}_{b_j}^{\nu_j}\, v )_{\Omega}  \vert + \vert  ( \prescript{}{x_j}{\mathcal{D}}_{b_j}^{\nu_j}\, u,\, \prescript{}{a_j}{\mathcal{D}}_{x_j}^{\nu_j}\, v )_{\Omega}  \vert \Big)d\nu_j}{\Vert v\Vert_{\mathfrak{B}^{\varphi,\rho_1,\cdots,\rho_d}(\Omega)}}. \quad \qquad 
	\end{eqnarray}
	Therefore, from \eqref{qqqq}, \eqref{qqqq2}, \eqref{equiv22}, and \eqref{equiv23} we have
	\begin{eqnarray}
	\label{thm123}
	\underset{0 \neq v \in\mathfrak{B}^{\varphi,\rho_1,\cdots,\rho_d} (\Omega)}{\sup} \frac{\vert a(u,v)\vert}{\Vert v\Vert_{\mathfrak{B}^{\varphi,\rho_1,\cdots,\rho_d}(\Omega)}} 
	&\geq& \bar{\beta} \underset{0 \neq v \in\mathfrak{B}^{\varphi,\rho_1,\cdots,\rho_d} (\Omega)}{\sup}  \frac{\vert (u,v)_{\Omega}\vert+\int_{\tau^{min}}^{\tau^{max}} \varphi(\tau) \,\vert(\prescript{}{0}{\mathcal{D}}_{t}^{\tau}\, u, \prescript{}{t}{\mathcal{D}}_{T}^{\tau}\, v )_{\Omega} \vert d\tau }{\Vert v\Vert_{\mathfrak{B}^{\varphi,\rho_1,\cdots,\rho_d}(\Omega)}}
	\nonumber
	\\
	&&+ \frac{
		\sum_{j=1}^{d} \int_{\nu_j^{min}}^{\nu_j^{max}} \rho_j(\nu_j) \, \Big(\vert   ( \prescript{}{a_j}{\mathcal{D}}_{x_j}^{\nu_j}\, u,\, \prescript{}{x_j}{\mathcal{D}}_{b_j}^{\nu_j}\, v )_{\Omega}  \vert + \vert  ( \prescript{}{x_j}{\mathcal{D}}_{b_j}^{\nu_j}\, u,\, \prescript{}{a_j}{\mathcal{D}}_{x_j}^{\nu_j}\, v )_{\Omega}  \vert \Big)d\nu_j}{\Vert v\Vert_{\mathfrak{B}^{\varphi,\rho_1,\cdots,\rho_d}(\Omega)}}
	\nonumber
	\\
	&
	\geq &
	\bar{\beta} \, \bar{C} \,\Big(\Vert u \Vert_{L^2(\Omega)}+
	\vert u \vert_{\prescript{l,\mathfrak{D}}{}H^{\varphi}(I; L^2(\Lambda_d))} + \vert u \vert_{L^2(I; \mathcal{X}_d)} \Big),
	% \\
	%& \geq \bar{C}\Big(
	% \vert u \vert_{\prescript{l,\mathfrak{D}}{}H^{\varphi}(I; L^2(\Lambda_d))} \, \, \vert v \vert_{\prescript{r,\mathfrak{D}}{}H^{\varphi}(I;L^2(\Lambda_d))} +  \vert u \vert_{L^2(I; \mathcal{X}_d)} \, \vert v \vert_{L^2(I; \mathcal{X}_d)}  +  \Vert u \Vert_{L^2(\Omega)} \,  \Vert v \Vert_{L^2(\Omega)}\Big), \,
	\end{eqnarray}
	where $\bar{C}= min\{\tilde{C}_2, \, \tilde{C}_1 \}$. 
	%Moreover, for $u \in \mathcal{B}^{\varphi,\rho_1,\cdots,\rho_d} (\Omega)$ and $v \in  \mathfrak{B}^{\varphi,\rho_1,\cdots,\rho_d} (\Omega)$
	%%
	%\begin{align}
	%\label{thm124}
	%\nonumber
	%& \Vert u \Vert_{\mathcal{B}^{\tau,\nu_1,\cdots,\nu_d}(\Omega)}\Vert v \Vert_{\mathfrak{B}^{\tau,\nu_1,\cdots,\nu_d}(\Omega)}
	%\\
	%\nonumber
	%&\cong 
	%\Vert u \Vert_{\prescript{r}{}H^{\tau}(I; L^2(\Lambda_d))} \, \, \Vert v \Vert_{\prescript{l}{}H^{\tau}(I;L^2(\Lambda_d))} + \Vert u \Vert_{L^2(I; \mathcal{X}_d)} \, \Vert v \Vert_{L^2(I; \mathcal{X}_d)} 
	%+  \Vert u \Vert_{L^2(\Omega)} \,  \Vert v \Vert_{L^2(\Omega)}.
	%\end{align}
	%%
	Accordingly,
	\begin{equation}
	\label{inequality_eq3}
	\underset{0 \neq u \in \mathcal{B}^{\varphi,\rho_1,\cdots,\rho_d} (\Omega)} {inf} \,\,\underset{0 \neq v \in\mathfrak{B}^{\varphi,\rho_1,\cdots,\rho_d} (\Omega)}{sup} \frac{\vert a(u,v)\vert}{\Vert v \Vert_{\mathfrak{B}^{\varphi,\rho_1,\cdots,\rho_d}(\Omega)}} \geq \beta \, \Vert u \Vert_{\mathcal{B}^{\varphi,\rho_1,\cdots,\rho_d}(\Omega)},
	\end{equation}
	where $\beta=\bar{\beta} \, \bar{C} $ is a positive constant and independent.
\end{proof}
\begin{thm}
	\label{Thm: Well-Posedness_1D}
	\textbf{(Well-Posedness)} For $0<2\tau^{min}<2\tau^{max}<1$ ($1<2\tau^{min}<2\tau^{max}<2$), $1<2\nu^{min}_i<2\nu^{max}_i<2$, and $i=1,\cdots,d$, there exists a unique solution to \eqref{Eq: infinit-dim PG method_1111}, which is continuously dependent on  $f \in \big(\mathcal{B}^{\tau,\nu_1,\cdots,\nu_d}\big)^{\star}(\Omega)$, where $\big(\mathcal{B}^{\tau,\nu_1,\cdots,\nu_d}\big)^{\star}(\Omega)$ is the dual space of $\mathcal{B}^{\tau,\nu_1,\cdots,\nu_d}(\Omega)$.
\end{thm}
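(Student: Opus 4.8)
The plan is to obtain the result as a direct application of the Banach--Ne\v{c}as--Babu\v{s}ka (generalized Lax--Milgram) theorem. For the linear problem \eqref{Eq: general weak form} posed on the pair of spaces $\mathcal{B}^{\varphi,\rho_1,\cdots,\rho_d}(\Omega)$ and $\mathfrak{B}^{\varphi,\rho_1,\cdots,\rho_d}(\Omega)$, that theorem guarantees existence, uniqueness, and continuous dependence on the data exactly when three hypotheses are met: (i) boundedness of $a(\cdot,\cdot)$; (ii) a uniform inf--sup condition; and (iii) non-degeneracy of $a$ in its test argument, i.e. $\sup_{0\neq u}\vert a(u,v)\vert > 0$ for every nonzero $v$ in the test space. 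My strategy is therefore to verify each of these three ingredients by quoting the structural results already established, and then to read off the stability estimate.

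First I would observe that hypothesis (i) is precisely the Continuity Lemma \ref{continuity_lem}, which supplies a finite continuity constant so that $\vert a(u,v)\vert \leq \beta\,\Vert u\Vert_{\mathcal{B}^{\varphi,\rho_1,\cdots,\rho_d}(\Omega)}\,\Vert v\Vert_{\mathfrak{B}^{\varphi,\rho_1,\cdots,\rho_d}(\Omega)}$. Hypothesis (ii) is furnished by Theorem \ref{inf_sup_d_lem}, which, under Assumption \ref{Assum: sup cond}, yields
\[
\underset{0 \neq u \in \mathcal{B}^{\varphi,\rho_1,\cdots,\rho_d} (\Omega)}{\inf}\ \underset{0 \neq v \in\mathfrak{B}^{\varphi,\rho_1,\cdots,\rho_d} (\Omega)}{\sup}\ \frac{\vert a(u,v)\vert}{\Vert v\Vert_{\mathfrak{B}^{\varphi,\rho_1,\cdots,\rho_d}(\Omega)}\,\Vert u\Vert_{\mathcal{B}^{\varphi,\rho_1,\cdots,\rho_d}(\Omega)}} \geq \beta > 0.
\]
For hypothesis (iii) I would invoke the equivalence \eqref{equiv-bilinear2}, which shows that $\vert a(u,v)\vert$ is comparable to the sum of the absolute values of its reaction, temporal, and spatial components; combined with the three displayed inequalities of Assumption \ref{Assum: sup cond}, this rules out the existence of a nonzero test function $v$ annihilated by $a(u,\cdot)$ for all $u$, so that $\sup_{0\neq u}\vert a(u,v)\vert>0$.

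With (i)--(iii) in hand, the Banach--Ne\v{c}as--Babu\v{s}ka theorem delivers a unique $u\in\mathcal{B}^{\varphi,\rho_1,\cdots,\rho_d}(\Omega)$ solving the weak problem, together with the a priori bound $\Vert u\Vert_{\mathcal{B}^{\varphi,\rho_1,\cdots,\rho_d}(\Omega)} \leq \tfrac{1}{\beta}\,\Vert f\Vert_{(\mathcal{B}^{\varphi,\rho_1,\cdots,\rho_d})^{\star}(\Omega)}$, since $l(v)=(f,v)_\Omega$ is bounded by the dual norm of $f$; this is exactly the claimed continuous dependence. The main obstacle here is conceptual rather than computational: the genuine analytic difficulty has already been discharged in proving the inf--sup estimate of Theorem \ref{inf_sup_d_lem}, so the remaining care lies in confirming that the two-sided, distributed-order structure of $a$ is compatible with the non-degeneracy requirement (iii), namely that Assumption \ref{Assum: sup cond} together with \eqref{equiv-bilinear2} genuinely excludes a nontrivial element of the test space orthogonal to the range of $a(u,\cdot)$, rather than merely guaranteeing non-vanishing of each individual component form.
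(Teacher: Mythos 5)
Your proposal is correct and follows essentially the same route as the paper: the paper's proof likewise consists of invoking the generalized Babu\v{s}ka--Lax--Milgram theorem with the continuity bound of Lemma \ref{continuity_lem} and the \textit{inf-sup} condition of Theorem \ref{inf_sup_d_lem}. You are in fact slightly more complete, since you explicitly verify the third (non-degeneracy in the test argument) hypothesis and spell out the a priori bound giving continuous dependence on $f$, both of which the paper leaves implicit.
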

\begin{proof}
	In virtue of the generalized Babu\v{s}ka-Lax-Milgram theorem \cite{shen2011spectral}, the well-posedness of the weak form in \eqref{Eq: general weak form} in ($1+d$) dimensions is guaranteed by the continuity and the \textit{inf-sup} condition, which are proven in Lemma \ref{continuity_lem} and Theorem \ref{inf_sup_d_lem}, respectively.
\end{proof}

\section{Petrov Galerkin Method}
\label{Sec: method}
%
%%%%%%%%%%%%%%%%%%%%%%%%%%%%%%%%%%
%
To construct a Petrov-Galerkin spectral method for the finite-dimensional weak form problem in \eqref{Eq: infinit-dim PG method_1111}, we first define the proper finite-dimensional basis/test spaces and then implement the numerical scheme.

%%%%%%%%%%%%%%%%%%%%%%%%%%%%%
\subsection{\textbf{Space of Basis ($U_N$) and Test ($V_N$) Functions}}
\label{Sec: Basis Func TSFA}
%%%%%%%%%%%%%%%%%%%%%%%%%%%%%
% Eq: infinit-dim PG method_1111
As discussed in \cite{samiee2017unified1}, we take the spatial basis, given in the standard domain $ \xi \in [-1,1]$ as $\phi^{}_{m} ( \xi ) = \sigma_m \big{(} P_{m+1} (\xi) - P_{m-1} (\xi)\big{)},\, m=1,2,\cdots$, where $P_{m} (\xi)$ is the Legendre polynomials of order $m$ and $\sigma_m = 2 + (-1)^{m}$. Besides, employing Jacobi \textit{poly-fractonomials} of the first kind \cite{zayernouri2013fractional,zayernouri2015tempered},
the temporal basis functions are given in the standard domain $\eta\in [-1,1]$ as $\psi^{\tau}_n(\eta) = {\sigma}_{n} (1+\eta)^{\tau} P_{n-1}^{-\tau, \tau} (\eta),\, n=1,2,\ldots$.

We also let $\eta(t) = 2t/T -1$ and $\xi_j(s) = 2\frac{s-a_j}{b_j-a_j} -1$ to be temporal and spatial affine mappings from $t\in [0,T]$ and $x_j\in[a_j,b_j]$ to the standard domain $[-1,1]$, respectively. Therefore,
\begin{equation*}
\label{Eq: Trial Space: PG1}
U_N = 
%\begin{cases} 
{\rm span} \Big\{    \Big( \psi^{\,\tau}_n \circ \eta \Big) ( t )
\prod_{j=1}^{d} \Big( \phi^{}_{m_j} \circ \xi_j\Big)  (x_j)
: n = 1,2, \cdots, \mathcal{N}, \, m_j= 1,2, \cdots, \mathcal{M}_j\Big\}.
\end{equation*}
Similarly, we employ Legendre polynomials and Jacobi \textit{polyfractonomials} of second kind in the standard domain to construct the finite dimensional test space as
\begin{align}
\nonumber
%\label{Eq: Test Space: PG}
V_N = {\rm span} \Big\{  \Big(\Psi^{\tau}_r \circ \eta\Big)(t)
\prod_{j=1}^{d} \Big( \Phi^{}_{k_j} \circ \xi_j\Big)(x_j)
: r = 1,2, \cdots, \mathcal{N}, \, k_j= 1,2, \cdots, \mathcal{M}_j\Big\},
\end{align}
where $\Psi^{\tau}_{r}(\eta) = \widetilde{\sigma}_{r} (1-\eta)^{\tau}\, P_{r-1}^{\tau,-\tau} (\eta),\, r=1,2,\cdots$ and $\Phi^{}_{k} (\xi) = \widetilde{\sigma}_{k} \big{(} P_{k+1}^{} (\xi) - P_{k-1}^{} (\xi)\big{)},\, k =1,2,\cdots$. The coefficient $ \widetilde{\sigma}_{k}$ is defined as $ \widetilde{\sigma}_{k} = 2\,(-1)^{k} + 1$. 

Since the univariate basis/test functions belong to the fractional Sobolev spaces (see \cite{zayernouri2013fractional}) and $ 0<\varphi(\tau)\in L^1((\tau^{min},\tau^{max}))$, 
$0<\rho_j(\nu_j)\in L^1((\nu_j^{min},\nu_j^{max}))$ for $j=1,\cdots,d$, then $U_N\subset  \mathcal{B}^{\varphi,\rho_1,\cdots,\rho_d} (\Omega)$ and $V_N\subset  \mathfrak{B}^{\varphi,\rho_1,\cdots,\rho_d} (\Omega)$. Accordingly, we approximate the solution in terms of a linear combination of elements in $U_N$, which satisfies initial and boundary conditions.
%
%%%%%%%%%%%%%%%%%%%%%%%%%%%%%%%%%%%%%%
%
\subsection{\textbf{Implementation of the PG Spectral Method}}
\label{Sec: Implementaiton of PG}
%
%%%%%%%%%%%%%%%%%%%%%%%%%%%%%%%%%%%%%%
%
The solution $u_N$ of \eqref{Eq: infinit-dim PG method_1111} can be represented as
\begin{eqnarray}
\label{Eq: PG expansion}
u_{N}(x,t) = 
\sum_{n=1}^\mathcal{N}
\sum_{m_1=1}^{\mathcal{M}_1}
\cdots 
\sum_{m_d= 1}^{\mathcal{M}_d}
\hat u_{ n,m_1,\cdots,m_d} 
\Big[\psi^{\tau}_n(t)
\prod_{j=1}^{d} \phi^{}_{m_j}(x_j)
\Big]
\end{eqnarray}
in $\Omega$ and also we take $v_N = \Psi^{\tau}_r(t) \prod_{j=1}^{d} \Phi^{}_{k_j}(x_j)$, $r = 1,2, \dots, \mathcal{N}$, $k_j= 1,2, \dots, \mathcal{M}_j$. Accordingly, by replacing $u_N$ and $v_N$ in \eqref{Eq: infinit-dim PG method_1111}, we obtain the following Lyapunov system 
%
%Eq: infinit-dim PG method_1111Eq: infinit-dim PG method_1122
%We enforce the corresponding residual to be $L^2$-orthogonal to $v_N \in V_N$, which leads to the \textcolor{red}{ finite-dimensional} weak form \eqref{Eq: Finite-dim PG method}. Specifically, by choosing $v_N = \Psi^{\,\tau}_r(t) \prod_{j=1}^{d} \Phi^{}_{k_j}(x_j)$, when $r = 1, \dots, \mathcal{N}$ and $k_j= 1, \dots, \mathcal{M}_j$, $j=1,2,\cdots,d$, we get 
%
\begin{align}
\label{Eq: general Lyapunov}
\Big(
S_{\tau}^{\varphi} \otimes M_1 \otimes M_2 \cdots \otimes M_d 
&+
\sum_{j=1}^{d} 
[M_{\tau} \otimes M_1\otimes \cdots   \otimes M_{j-1} \otimes S_{j}^{{Tot}} \otimes M_{j+1}  \cdots \otimes M_d]
\nonumber
\\
&+ 
\gamma M_{\tau}\otimes M_1 \otimes M_2 \cdots \otimes M_d \Big) \mathcal{U}= F,
\end{align} 
in which $\otimes$ represents the Kronecker product, $F$ denotes the multi-dimensional load matrix whose entries are given as
\begin{eqnarray}
\label{Eq: general load matrix}
F_{r,k_1,\cdots, k_d} = \int_{\Omega}^{} f(t,x_1,\cdots,x_d) 
\Big(
\Psi^{\,\tau}_r \circ \eta \Big)(t)
\prod_{j=1}^{d} \Big(\Phi^{}_{k_j} \circ \xi_j\Big)(x_j)\, 
d\Omega, 
\end{eqnarray}
and $S^{\, {Tot}}_{j} = c_{l_j} \times S_{l}^{\varrho_j} + c_{r_j} \times S_{r}^{\varrho_j}-\kappa_{l_j} \times S_{l}^{\rho_j}-\kappa_{r_j} \times S_{r}^{\rho_j}$. The matrices $S_{\tau}^{\varphi}$ and $M_{\tau}$ denote the temporal stiffness and mass matrices, respectively; $ S_{l}^{\varrho_j}$, $ S_{r}^{\varrho_j}$, $ S_{l}^{\rho_j}$, $ S_{r}^{\rho_j}$, and $M_j$ denote the spatial stiffness and mass matrices. The entries of spatial mass matrix $M_j$ are computed analytically, while we employ proper quadrature rules to accurately compute the entries of temporal mass matrix $M_{\tau}$ as discussed in \cite{samiee2017Unified}.
% as discussed in \cite{samiee2017fast}. 
The entries of $S_{\tau}^{\varphi}$ are also computed based on Theorem 3.1 (spectrally/exponentially accurate quadrature rule in $\alpha$-dimension) in \cite{kharazmi2016petrov}. Likewise, we present the computation of $S^{Tot}_{j}$ in Lemma \ref{Thm: Spatial Stiffness Matrix} in Appendix.
\begin{rem}
	The choices of coefficients in the construction of finite dimensional basis/test functions lead to symmetric mass/stiffness matrices, which help formulating the following fast solver.
\end{rem}
%%%%%%%%%%%%%%%%%%%%%%%%%%%%%%%%%%%%%%
%
\subsection{\textbf{Unified Fast FPDE Solver}}
\label{Sec: FastSolver FPDE}
%
%%%%%%%%%%%%%%%%%%%%%%%%%%%%%%%%%%%%%%
%
In order to formulate a closed-form solution to the Lyapunov system \eqref{Eq: general Lyapunov}, we follow \cite{zayernouri2015unified} and develop a fast solver in terms of the generalized eigen-solutions. 

\begin{thm}{\cite{samiee2017Unified}}
	\label{Thm: fast solver}
	Take $\{ \vec{e}_{m_j}^{j}    ,    \lambda^{j}_{m_j}\,  \}_{m_j=1}^{\mathcal{M}_j}$ as the set of general eigen-solutions of the spatial stiffness matrix $S^{Tot}_j$ with respect to the mass matrix $M_{j}$. Besides, let $\{ {\vec{e}_{n}}^{\,\,\tau}    ,    \lambda^{\tau}_{n}\,  \}_{n=1}^{\mathcal{N}}$ be the set of general eigen-solutions of the temporal mass matrix $M_{\tau}$ with respect to the stiffness matrix $S_{\tau}^{\varphi}$. Then the unknown coefficients matrix $\mathcal{U}$ is obtained as
	\begin{equation}
	\label{Eq: thm u expression in terms of k}
	\mathcal{U} = 
	\sum_{n=1}^{\mathcal{N}}
	\,\,
	\sum_{m_1= 1}^{\mathcal{M}_1}
	\cdots 
	\sum_{m_d= 1}^{\mathcal{M}_d}
	\kappa_{ n,m_1,\cdots,\,m_d  } \,
	\,\vec{e}_n^{\,\,\tau}\,
	\otimes
	\,{\vec{e}_{m_1}}^{\,\, 1}\,\,
	\otimes
	\cdots
	\otimes
	\,{\vec{e}_{m_d}}^{\,\, d},
	\end{equation}
	where
	\begin{eqnarray}
	\label{Eq: thm k fraction_1}
	\kappa_{ n,m_1,\cdots,\,m_d  } =  \frac{(\,\vec{e}_n^{\,\,\tau}
		\,{\vec{e}_{m_1}}^{1}
		\cdots
		\,{\vec{e}_{m_d}}^{d}) F}
	{
		\Big[
		(\vec{e}_n^{\,\,\tau^T} S_{\tau}^{\varphi} \vec{e}_n^{\,\,\tau})
		\prod_{j=1}^{d} ({\vec{e}^{j^T}_{m_j}}  M_{j} {\vec{e}_{m_j}^{j}})
		\Big]
		\Lambda_{n,m_1,\cdots,m_d}
	},
	\end{eqnarray}
	and
	\begin{eqnarray}
	\nonumber
	&\Lambda_{n,m_1,\cdots, m_d} = \Big[
	(1+\gamma\,\, 
	\lambda^{\tau}_n)
	+
	\lambda^{\tau}_n
	\sum_{j=1}^{d}
	(
	\lambda^{j}_{m_j}
	)
	\Big].  &
	\end{eqnarray}
	
\end{thm}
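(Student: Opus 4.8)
The plan is to verify \emph{directly} that the tensor expansion \eqref{Eq: thm u expression in terms of k} solves the Lyapunov system \eqref{Eq: general Lyapunov}, exploiting the generalized eigenstructure together with the symmetry of the mass and stiffness matrices noted in the preceding remark. Since each spatial mass matrix $M_j$ is symmetric positive definite, the generalized eigenvectors $\{\vec{e}_{m_j}^{j}\}_{m_j=1}^{\mathcal{M}_j}$ of $S^{Tot}_j$ relative to $M_j$ form a complete, $M_j$-orthogonal basis of $\mathbb{R}^{\mathcal{M}_j}$; likewise, since $S_{\tau}^{\varphi}$ is symmetric positive definite, the eigenvectors $\{\vec{e}_{n}^{\,\tau}\}_{n=1}^{\mathcal{N}}$ of $M_{\tau}$ relative to $S_{\tau}^{\varphi}$ are complete and $S_{\tau}^{\varphi}$-orthogonal. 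Hence the products $\vec{e}_n^{\,\tau}\otimes\vec{e}_{m_1}^{1}\otimes\cdots\otimes\vec{e}_{m_d}^{d}$ form a basis of the full tensor-product space, and $\mathcal{U}$ admits a unique representation of the form \eqref{Eq: thm u expression in terms of k}; it remains only to pin down the coefficients $\kappa_{n,m_1,\cdots,m_d}$.

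First I would evaluate the action of the system matrix on a single basis tensor. Invoking the eigen-relations $S^{Tot}_j \vec{e}_{m_j}^{j} = \lambda^{j}_{m_j} M_j \vec{e}_{m_j}^{j}$ and $M_{\tau}\vec{e}_n^{\,\tau} = \lambda^{\tau}_n S_{\tau}^{\varphi}\vec{e}_n^{\,\tau}$ together with the mixed-product identity $(A\otimes B)(\vec{x}\otimes\vec{y})=(A\vec{x})\otimes(B\vec{y})$, every one of the $d+2$ Kronecker terms in \eqref{Eq: general Lyapunov} sends $\vec{e}_n^{\,\tau}\otimes\vec{e}_{m_1}^{1}\otimes\cdots\otimes\vec{e}_{m_d}^{d}$ to a scalar multiple of the \emph{same} vector $(S_{\tau}^{\varphi}\vec{e}_n^{\,\tau})\otimes(M_1\vec{e}_{m_1}^{1})\otimes\cdots\otimes(M_d\vec{e}_{m_d}^{d})$: the leading term contributes the scalar $1$, the $j$-th stiffness term contributes $\lambda^{\tau}_n\lambda^{j}_{m_j}$, and the reaction term contributes $\gamma\lambda^{\tau}_n$. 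Summing these reproduces precisely $\Lambda_{n,m_1,\cdots,m_d}=(1+\gamma\lambda^{\tau}_n)+\lambda^{\tau}_n\sum_{j=1}^{d}\lambda^{j}_{m_j}$, so the operator is effectively diagonalized on the eigen-basis.

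Next I would substitute this into $A\mathcal{U}=F$ and contract on the left with $\vec{e}_n^{\,\tau}\otimes\vec{e}_{m_1}^{1}\otimes\cdots\otimes\vec{e}_{m_d}^{d}$. The orthogonality relations $\vec{e}_n^{\,\tau^{T}}S_{\tau}^{\varphi}\vec{e}_{n'}^{\,\tau}=0$ for $n\neq n'$ and $\vec{e}^{j^{T}}_{m_j}M_j\vec{e}_{m_j'}^{j}=0$ for $m_j\neq m_j'$ annihilate all off-diagonal contributions, leaving
\[
\kappa_{n,m_1,\cdots,m_d}\,\Lambda_{n,m_1,\cdots,m_d}\,\big(\vec{e}_n^{\,\tau^{T}}S_{\tau}^{\varphi}\vec{e}_n^{\,\tau}\big)\prod_{j=1}^{d}\big(\vec{e}^{j^{T}}_{m_j}M_j\vec{e}_{m_j}^{j}\big)=\big(\vec{e}_n^{\,\tau}\vec{e}_{m_1}^{1}\cdots\vec{e}_{m_d}^{d}\big)F,
\]
which on rearrangement is exactly \eqref{Eq: thm k fraction_1}.

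The only genuine obstacle is ensuring that the denominators never vanish. The quadratic forms $\vec{e}_n^{\,\tau^{T}}S_{\tau}^{\varphi}\vec{e}_n^{\,\tau}$ and $\vec{e}^{j^{T}}_{m_j}M_j\vec{e}_{m_j}^{j}$ are strictly positive by positive definiteness, so the real issue is the spectral factor $\Lambda_{n,m_1,\cdots,m_d}$. Its non-vanishing cannot follow from symmetry alone and must instead be read off from the signs of the eigenvalues $\lambda^{\tau}_n$ and $\lambda^{j}_{m_j}$ dictated by the definiteness of $S_{\tau}^{\varphi}$ and $S^{Tot}_j$; this is the discrete counterpart of the \textit{inf-sup}/well-posedness guarantee of Theorem \ref{Thm: Well-Posedness_1D}, and checking it is the one step where the detailed structure of the stiffness matrices, rather than mere symmetry, is required.
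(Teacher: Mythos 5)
Your argument is correct and is essentially the standard diagonalization proof that the cited source \cite{samiee2017Unified} uses for this theorem (the present paper states the result without reproducing a proof): expand $\mathcal{U}$ in the generalized eigen-basis, use the mixed-product property of the Kronecker product together with the eigen-relations to reduce the operator in \eqref{Eq: general Lyapunov} to the scalar $\Lambda_{n,m_1,\cdots,m_d}$ on each basis tensor, and then project against the eigenvectors using the $S_{\tau}^{\varphi}$- and $M_j$-orthogonality to isolate each coefficient. Your closing caveat is also the right one to flag: completeness of the eigen-bases and the non-vanishing of $\Lambda_{n,m_1,\cdots,m_d}$ rest on definiteness properties of $S_{\tau}^{\varphi}$ and $S_j^{Tot}$ that do not follow from symmetry alone and are inherited from the discrete well-posedness/\textit{inf-sup} analysis.
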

%%PPPPPPPPPPPPPPPPPPPPPPPPPP
\begin{rem}
	The naive computation of all entries in \eqref{Eq: thm k fraction_1} leads to a computational complexity of  $O(\mathcal{N}^{2+2d})$, including construction of stiffness and mass matrices. By performing \textit{sum-factorization} \cite{zayernouri2015unified}, the operator counts can be reduced to $O(\mathcal{N}^{2+d})$. 
	
	%In other words, \textit{sum-factorization} technique helps us reduce the computational complexity to $O(\mathcal{N}^{3+d})$.
\end{rem}

\section{Stability and Error Analysis}
\label{Sec: error analysis of PG}
The following theorems provide the finite dimensional stability  and error analysis of the proposed scheme, based on the well-posedness analysis from Section \ref{Sec: Stability and Convergence of PG}.

\subsection{\textbf{Stability Analysis}}
\label{Sec: Stability PG}

\begin{thm}
	\label{Thm: inf-sup_3}
	Let Assumption \ref{Assum: sup cond} holds. The Petrov-Gelerkin spectral method for \eqref{Eq: infinit-dim PG method_1122} is stable, i.e., 
	%the discrete $\inf$-$\sup$ condition 
	\begin{eqnarray}
	\label{Eq: inf sup-time}
	&&\underset{0 \neq u_N \in U_N}{inf}\, \, \underset{0 \neq v_N \in V_N}{sup}
	\frac{\vert a(u_N , v_N)\vert}{\Vert v_N\Vert_{\mathfrak{B}^{\varphi,\rho_1,\cdots,\rho_d}(\Omega)}\Vert u_N\Vert_{\mathcal{B}^{\varphi,\rho_1,\cdots,\rho_d}(\Omega)}} \geq \beta > 0, \quad 
	\end{eqnarray}
	holds with $\beta > 0$ and independent of $N$.
	%, where $\underset{0 \neq v_N \in V_N}{\sup} \vert a(u_N , v_N)\vert>0$.
\end{thm}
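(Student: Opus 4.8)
The plan is to recognize Theorem~\ref{Thm: inf-sup_3} as the finite-dimensional shadow of the continuous \textit{inf-sup} estimate established in Theorem~\ref{inf_sup_d_lem}, and to transfer that estimate to the discrete setting with a stability constant that does not deteriorate as $N \to \infty$. Since the univariate basis and test functions belong to the underlying fractional Sobolev spaces, the inclusions $U_N \subset \mathcal{B}^{\varphi,\rho_1,\cdots,\rho_d}(\Omega)$ and $V_N \subset \mathfrak{B}^{\varphi,\rho_1,\cdots,\rho_d}(\Omega)$ make the discretization \emph{conforming}; consequently the continuity bound from Lemma~\ref{continuity_lem} restricts verbatim to $U_N \times V_N$. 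The entire difficulty therefore concentrates on producing, for each $0 \neq u_N \in U_N$, an admissible discrete supremizer $v_N \in V_N$ realizing the lower bound with a $\beta$ that is independent of $N$.

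The construction I would use exploits the built-in duality between the trial and test bases under reflection. Let $R$ denote the reflection $t \mapsto T-t$ in time together with $x_j \mapsto a_j + b_j - x_j$ in each spatial coordinate. The Jacobi identity $P_{n-1}^{-\tau,\tau}(-\eta) = (-1)^{n-1} P_{n-1}^{\tau,-\tau}(\eta)$ shows that $R$ carries the first-kind poly-fractonomial $\psi^{\tau}_n$ onto a scalar multiple of the second-kind poly-fractonomial $\Psi^{\tau}_n$, while the spatial factors $\phi^{}_{m_j}$ and $\Phi^{}_{k_j}$ span the same Legendre combinations $P_{k+1}-P_{k-1}$. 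Hence $v_N := R\, u_N \in V_N$, so the candidate is genuinely admissible and carries the same discretization indices as $u_N$. With this choice I would bound $a(u_N,v_N)$ from below by repeating the continuous argument step by step: the spatial contributions are handled through the equivalence \eqref{equiv_space} and Theorem 4.3 in \cite{samiee2017unified1} exactly as in \eqref{qqqq}, whereas the temporal term is controlled by the distributed supremizer estimate of Lemma~\ref{norm_2231}. This should yield $a(u_N,v_N) \geq \beta\, \Vert u_N \Vert_{\mathcal{B}^{\varphi,\rho_1,\cdots,\rho_d}(\Omega)}^2$ together with the reflection-invariance $\Vert v_N \Vert_{\mathfrak{B}^{\varphi,\rho_1,\cdots,\rho_d}(\Omega)} \cong \Vert u_N \Vert_{\mathcal{B}^{\varphi,\rho_1,\cdots,\rho_d}(\Omega)}$; dividing, taking the supremum over $v_N$ and then the infimum over $u_N$ delivers \eqref{Eq: inf sup-time}.

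The main obstacle is certifying that every constant produced along the way is independent of $N$. Two points need care. First, the continuous supremizer in Lemma~\ref{norm_2231} was built from the non-smooth function $H(t-T)\big(u-u\vert_{t=T}\big)$, which is not a polynomial times a fractonomial; I must instead verify that the reflected $R\,u_N$ plays the same role inside the finite-dimensional space, i.e.\ that the temporal bilinear form $\int_{\tau^{min}}^{\tau^{max}} \varphi(\tau)\,\vert(\prescript{}{0}{\mathcal{D}}_{t}^{\tau} u_N, \prescript{}{t}{\mathcal{D}}_{T}^{\tau} R\,u_N)_{\Omega}\vert\, d\tau$ stays bounded below by $\vert u_N \vert^2_{\prescript{l,\mathfrak{D}}{}H^{\varphi}(I;L^2(\Lambda_d))}$ with a degree-free constant, which is exactly where the first/second-kind structure is essential. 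Second, I must check that the norm equivalences \eqref{equivalent}, \eqref{eq14}, and \eqref{equiv_space} inherited from the continuous spaces hold on $U_N$ and $V_N$ with constants independent of $\mathcal{N}$ and the $\mathcal{M}_j$; because these equivalences are stated at the level of the ambient spaces and the discrete functions are merely particular elements of them, this transfer is automatic, and it is precisely this observation that secures the phrase \emph{independent of $N$} in the statement.
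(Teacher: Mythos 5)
Your proposal and the paper part ways at the very first step. The paper's entire proof of Theorem \ref{Thm: inf-sup_3} is the observation that $U_N \subset \mathcal{B}^{\varphi,\rho_1,\cdots,\rho_d}(\Omega)$ and $V_N \subset \mathfrak{B}^{\varphi,\rho_1,\cdots,\rho_d}(\Omega)$, after which \eqref{Eq: inf sup-time} is declared to "follow directly" from the continuous \textit{inf-sup} condition of Theorem \ref{inf_sup_d_lem}. You correctly refuse to stop there: for a Petrov--Galerkin pair, conformity of the subspaces transfers the continuity bound but \emph{not} the \textit{inf-sup} constant, because the continuous supremizer associated with a given $u_N$ (in the paper's case the non-polynomial function $\mathcal{V}_u = H(t-T)(u - u\vert_{t=T})$ from Lemma \ref{norm_2231}) need not belong to $V_N$. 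Your diagnosis that the whole content of the theorem lies in exhibiting a \emph{discrete} supremizer with a degree-independent constant is exactly the point the paper elides, and your reflection map $R$ is a natural candidate: the Jacobi symmetry does carry $\psi^{\tau}_n$ onto a multiple of $\Psi^{\tau}_n$ and $\phi_{m}$ onto a multiple of $\Phi_{m}$, so $R\,u_N \in V_N$ with matching indices, and the norm identity $\Vert R\,u_N\Vert_{\mathfrak{B}^{\varphi,\rho_1,\cdots,\rho_d}(\Omega)} \cong \Vert u_N\Vert_{\mathcal{B}^{\varphi,\rho_1,\cdots,\rho_d}(\Omega)}$ is immediate from the reflection invariance of the left/right fractional seminorms.

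That said, your argument stops precisely where it becomes hard. The estimate
\begin{equation*}
\int_{\tau^{min}}^{\tau^{max}} \varphi(\tau)\,\big\vert\big(\prescript{}{0}{\mathcal{D}}_{t}^{\tau} u_N,\ \prescript{}{t}{\mathcal{D}}_{T}^{\tau} R\,u_N\big)_{\Omega}\big\vert\, d\tau \;\geq\; c\, \vert u_N \vert^2_{\prescript{l,\mathfrak{D}}{}H^{\varphi}(I;L^2(\Lambda_d))},
\end{equation*}
with $c$ independent of $\mathcal{N}$ and the $\mathcal{M}_j$, is asserted by analogy with Lemma \ref{norm_2231} but never established; Lemma \ref{norm_2231} is proved only for the specific Heaviside-type supremizer, and its argument does not apply to $R\,u_N$. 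For a general linear combination of basis functions the pairing of $\prescript{}{0}{\mathcal{D}}_{t}^{\tau}u_N$ against $\prescript{}{t}{\mathcal{D}}_{T}^{\tau}R\,u_N$ produces off-diagonal terms in the temporal stiffness matrix whose signs you do not control, so a lower bound of this form requires either a positivity/diagonal-dominance argument for that matrix or a different choice of $v_N$; nothing in your write-up rules out cancellation. Until that lower bound is proved with an $N$-uniform constant, your proof has a genuine gap at its central step --- albeit one that is at least honestly flagged, whereas the paper's one-line proof implicitly assumes the discrete supremizer exists without constructing it at all.
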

\begin{proof}
	Regarding $U_N \subset \mathcal{B}^{\varphi,\rho_1,\cdots,\rho_d}(\Omega)$ and $V_N \subset \mathfrak{B}^{\varphi,\rho_1,\cdots,\rho_d}(\Omega)$, \eqref{Eq: inf sup-time} follows directly from Theorem \ref{inf_sup_d_lem}.
\end{proof}
\begin{rem}
	The bilinear form \eqref{Eq: infinit-dim PG method_1122} can be expanded in terms of the basis and test functions to obtain the lower limit of $\beta$, see \cite{zayernouri2015unified,samiee2017Unified}.
\end{rem}

\subsection{\textbf{Error Analysis}}
\label{Sec: error PG}
Denoting by $P_\mathcal{M}(\Lambda)$ the space of all polynomials of degree $\leq \mathcal{M}$ on $\Lambda\subset \mathbb{R}$, $P^{\varphi}_\mathcal{M}(\Lambda):=P_\mathcal{M}(\Lambda) \cap {^\mathfrak{D}}H^{\varphi}(\Lambda)$, where $0<\varphi(\tau)\in L^1((\tau^{min},\tau^{max}))$ and ${^\mathfrak{D}}H^{\varphi}(\Lambda)$ is the  \textit{distributed Sobolev} space associated with the norm $\Vert \cdot \Vert_{{^\mathfrak{D}}H^{\varphi}(\Lambda)}$.
%with compact support on $\Lambda$. 
In this section, we take $I_0=(0,T)$, $I_i=(a_i,b_i)$ for $i=1,...,d$, $\Lambda_i=I_i\times \Lambda_{i-1}$, and $\Lambda_i^j=\prod_{\underset{k\neq j}{k=1}}^{i} I_k$. Besides,  $0<2\tau^{min}<2\tau^{max}<1$ ($1<2\tau^{min}<2\tau^{max}<2$), $1<2\nu^{min}_i<2\nu^{max}_i<2$ for $i=1,\cdots,d$. Where there is no confusion, the symbols $I_i$, $\Lambda_i$, and $\Lambda_i^j$ and the intervals of $(\tau^{min},\tau^{max})$ and $(\nu^{min}_i,\nu^{max}_i)$ will be dropped from the notations.
\begin{thm}
	\label{err_1}
	\cite{maday1990analysis} Let $r_1$ be a real number, where $r_1\neq \mathcal{M}_1 + \frac{1}{2}$, and $1\leq r_1$. There exists a projection operator $\Pi^{\nu_1}_{r_1,\, \mathcal{M}_1}$ from $H^{r_1}(\Lambda_1) \cap H^{\nu_1}_0(\Lambda_1)$ to $P^{\nu_1}_{\mathcal{M}_1}(\Lambda_1)$ such that for any $u\in H^{r_1}(\Lambda_1) \cap H^{\nu_1}_0(\Lambda_1),$ we have $\Vert u-\Pi^{\nu_1}_{r_1,\, \mathcal{M}_1} u \Vert_{{^c}H^{\nu_1}(\Lambda_1)}\leq c_1 \mathcal{M}_1^{\nu_1-r_1} \Vert u \Vert_{H^{r_1}(\Lambda_1)}$, where $c_1$ is a positive constant.
\end{thm}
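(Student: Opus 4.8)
The statement is a classical spectral (polynomial) approximation estimate in the spirit of Bernardi--Maday, adapted to the fractional Sobolev setting. The plan is to reduce the fractional-order estimate to a standard integer-compatible one by exploiting the norm equivalences recorded in \eqref{eq14}, and then to realize the projection concretely through a truncated Legendre expansion on the reference interval. First I would map $\Lambda_1=(a_1,b_1)$ affinely onto the standard interval $(-1,1)$, which leaves all the Sobolev (semi)norms invariant up to constants depending only on $b_1-a_1$. On $(-1,1)$ I would expand $u=\sum_{k\ge 0}\hat u_k P_k$ in Legendre polynomials, since the Legendre system is $L^2$-orthogonal and the Riemann--Liouville operators act on it in the explicit closed forms \eqref{Eq: 10}--\eqref{Eq: 11}. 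The projection $\Pi^{\nu_1}_{r_1,\mathcal{M}_1}u$ would then be the truncation retaining the modes $k\le\mathcal{M}_1$, corrected by at most two low-order boundary modes so that its image satisfies the homogeneous conditions encoded in $H^{\nu_1}_0$ and therefore lies in $P^{\nu_1}_{\mathcal{M}_1}(\Lambda_1)$.

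The central estimate proceeds by characterizing both norms through the decay of the Legendre coefficients. Using the equivalence $\Vert\cdot\Vert_{H^{\sigma}(\Lambda)}\cong\Vert\cdot\Vert_{{^c}H^{\sigma}(\Lambda)}$ from \eqref{eq14}, the squared ${^c}H^{\nu_1}$ error is controlled by $\sum_{k>\mathcal{M}_1}(1+k^2)^{\nu_1}\,\vert\hat u_k\vert^2\,\Vert P_k\Vert_{L^2}^2$, while $\Vert u\Vert_{H^{r_1}(\Lambda_1)}^2\cong\sum_{k}(1+k^2)^{r_1}\,\vert\hat u_k\vert^2\,\Vert P_k\Vert_{L^2}^2$. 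On the tail I would factor $(1+k^2)^{\nu_1}=(1+k^2)^{\nu_1-r_1}(1+k^2)^{r_1}$ and, since $\nu_1-r_1\le 0$ and $k>\mathcal{M}_1$, bound the first factor by $(1+\mathcal{M}_1^2)^{\nu_1-r_1}\lesssim \mathcal{M}_1^{2(\nu_1-r_1)}$; pulling this factor out of the sum leaves exactly $\Vert u\Vert_{H^{r_1}(\Lambda_1)}^2$, which yields the claimed bound $c_1\mathcal{M}_1^{\nu_1-r_1}\Vert u\Vert_{H^{r_1}(\Lambda_1)}$. An equivalent and slightly cleaner route is to define $\Pi^{\nu_1}_{r_1,\mathcal{M}_1}$ as the orthogonal projection onto $P^{\nu_1}_{\mathcal{M}_1}(\Lambda_1)$ in the $H^{\nu_1}_0$ inner product, so that the error is the best approximation in ${^c}H^{\nu_1}$ by Galerkin orthogonality; one is then reduced to the same tail estimate for an explicit competitor.

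The main obstacle is that, unlike the periodic or integer-order case, the fractional Sobolev norm on a bounded interval is \emph{not} directly equal to the weighted coefficient sum: the Riemann--Liouville derivatives of Legendre polynomials carry the singular weights $(1\pm\xi)^{-\nu_1}$ appearing in \eqref{Eq: 10}--\eqref{Eq: 11}, and the underlying norm equivalences degenerate precisely at the half-integer values excluded by the hypothesis $r_1\neq\mathcal{M}_1+\tfrac12$ (more generally at $\sigma=n-\tfrac12$, as already flagged in \eqref{eq14}). Making the coefficient characterization rigorous therefore rests on an interpolation-space argument between $L^2(\Lambda_1)$ and $H^{\lceil r_1\rceil}(\Lambda_1)$, combined with \eqref{eq14} and Lemma 3.1 of \cite{samiee2017unified1}, in order to transfer the clean integer-order estimates across to fractional order. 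A secondary technical point is the boundary correction: I must check that adjusting the truncation to meet the homogeneous conditions does not spoil the optimal rate, which holds because the correction involves only finitely many low-order modes whose contribution is dominated by the same tail bound.
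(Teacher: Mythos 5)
First, a point of reference: the paper does not prove this statement at all --- it is imported verbatim from \cite{maday1990analysis} (the citation is part of the theorem statement), so there is no in-paper proof to compare against. Your proposal must therefore stand on its own, and it contains a genuine gap at its central step. You reduce both norms to weighted sums of Legendre coefficients, writing the squared error as $\sum_{k>\mathcal{M}_1}(1+k^2)^{\nu_1}\vert\hat u_k\vert^2\Vert P_k\Vert_{L^2}^2$ and the right-hand side as $\sum_k(1+k^2)^{r_1}\vert\hat u_k\vert^2\Vert P_k\Vert_{L^2}^2$, and then factor out $(1+\mathcal{M}_1^2)^{\nu_1-r_1}$. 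This coefficient characterization of Sobolev norms is valid for Fourier series on the torus but is \emph{false} for Legendre expansions on a bounded interval: one has $\Vert P_k'\Vert_{L^2(-1,1)}\sim k^{3/2}$ while $\Vert P_k\Vert_{L^2(-1,1)}\sim k^{-1/2}$, so differentiation costs a factor $\sim k^2$ per mode, not $\sim k$. Concretely, the $L^2$-truncated Legendre series satisfies only $\Vert u-P_{\mathcal{M}}u\Vert_{H^{1}}\lesssim \mathcal{M}^{3/2-r}\Vert u\Vert_{H^{r}}$ (Canuto--Quarteroni), losing half an order relative to the optimal rate, and by interpolation the loss persists for every $\nu_1>1/2$ --- which is exactly the regime $1<2\nu_1<2$ relevant to this paper. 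A finite boundary correction cannot repair this, since the deficiency sits in the whole tail, not in finitely many modes. You half-sense the problem in your final paragraph, but you misattribute it to the singular weights in \eqref{Eq: 10}--\eqref{Eq: 11} and to the half-integer exclusion, and the proposed fix (``an interpolation-space argument \ldots to make the coefficient characterization rigorous'') cannot succeed because the characterization is simply not true.

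The standard (Bernardi--Maday) route, and the one your own last paragraph gestures toward without executing, is \emph{operator} interpolation rather than coefficient-tail estimation: take $\Pi_{\mathcal{M}_1}$ to be the $H^1_0$-orthogonal projection onto $P_{\mathcal{M}_1}\cap H^1_0$ (or an analogous optimal projector at integer order), prove the two endpoint bounds $\Vert u-\Pi_{\mathcal{M}_1}u\Vert_{L^2}\lesssim \mathcal{M}_1^{-r_1}\Vert u\Vert_{H^{r_1}}$ and $\Vert u-\Pi_{\mathcal{M}_1}u\Vert_{H^1}\lesssim \mathcal{M}_1^{1-r_1}\Vert u\Vert_{H^{r_1}}$ by a duality (Aubin--Nitsche) argument, and then interpolate the operator $I-\Pi_{\mathcal{M}_1}$ between $L^2$ and $H^1_0$ to reach order $\nu_1$. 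The exclusion $\sigma\neq n-\tfrac12$ then enters for the honest reason that $[L^2,H^1_0]_{\sigma}$ fails to coincide with $H^{\sigma}_0$ exactly at $\sigma=\tfrac12$; the equivalence \eqref{eq14} transfers the result to the ${^c}H^{\nu_1}$ norm used in the statement. If you rebuild your argument on that skeleton, the half-integer hypothesis and the constant $c_1$ both fall out naturally, and the suboptimality of the raw Legendre truncation never arises.
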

\begin{thm}
	\label{err_2}
	\cite{kharazmi2016petrov} Let $r_0 \geq 1 $, $r_0\neq \mathcal{N}+\frac{1}{2}$. There exists an operator $\Pi^{\varphi}_{r_0,\, \mathcal{N}}$ from $H^{r_0}(I) \cap {^{l,\mathfrak{D}}}H^{\varphi}(I)$ to $P^{\varphi}_{\mathcal{N}}(\Lambda_1)$ such that for any $u\in H^{r_0}(I) \cap {^{l,\mathfrak{D}}}H^{\varphi}(I)$, we have $$\Vert u-\Pi^{\varphi}_{r_0,\, \mathcal{N}} u \Vert_{{^l}H^{\varphi}(I)}^2\leq c_0 \, \mathcal{N}^{-2r_0} \int_{\tau^{min}}^{\tau^{max}} \varphi(\tau) \,\mathcal{N}^{2\tau}\Vert u \Vert_{H^{r_0}(I)}d\tau,$$ where $c_0$ is a positive constant and $0<\varphi(\tau)\in L^1((\tau^{min},\tau^{max}))$.
\end{thm}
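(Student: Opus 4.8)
The plan is to reduce the distributed-order bound to a single-order polynomial approximation estimate, handled for each fixed $\tau$ by the (temporal analogue of the) Maday-type result quoted in Theorem \ref{err_1}, and then to integrate against the density $\varphi$. Unwinding the definition of the left distributed seminorm,
\begin{equation*}
\Vert u - \Pi^{\varphi}_{r_0,\mathcal{N}} u \Vert_{{}^l H^{\varphi}(I)}^2 = \int_{\tau^{min}}^{\tau^{max}} \varphi(\tau)\, \Vert \prescript{}{0}{\mathcal{D}}_{t}^{\tau}(u - \Pi^{\varphi}_{r_0,\mathcal{N}} u) \Vert_{L^2(I)}^2 \, d\tau,
\end{equation*}
so it suffices to bound the integrand for each $\tau \in [\tau^{min},\tau^{max}]$ and then absorb the $\tau$-dependence into the integral. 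Concretely, once a single $\tau$-independent operator $\Pi^{\varphi}_{r_0,\mathcal{N}}$ is produced for which
\begin{equation*}
\Vert \prescript{}{0}{\mathcal{D}}_{t}^{\tau}(u - \Pi^{\varphi}_{r_0,\mathcal{N}} u) \Vert_{L^2(I)}^2 \leq c\, \mathcal{N}^{2(\tau - r_0)} \Vert u \Vert_{H^{r_0}(I)}^2
\end{equation*}
holds uniformly in $\tau$, multiplying by $\varphi(\tau)$ and integrating pulls $\mathcal{N}^{-2r_0}$ out front and leaves the factor $\int_{\tau^{min}}^{\tau^{max}} \varphi(\tau)\,\mathcal{N}^{2\tau}\,d\tau$, which is exactly the asserted estimate.

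I would take $\Pi^{\varphi}_{r_0,\mathcal{N}}$ to be one fixed polynomial projection, independent of $\tau$ --- naturally the (scaled) Legendre / $L^2$-orthogonal truncation onto $P^{\varphi}_{\mathcal{N}}(I)$. Writing $u$ in the Legendre basis and using the exact action of the left Riemann--Liouville derivative on that basis recorded in \eqref{Eq: 10}, the residual $u - \Pi^{\varphi}_{r_0,\mathcal{N}} u$ is the high-mode tail, and $\prescript{}{0}{\mathcal{D}}_{t}^{\tau}$ sends each Legendre mode $P_n$ to a poly-fractonomial $P_n^{\tau,-\tau}$ scaled by the Gamma ratio $\Gamma(n+1)/\Gamma(n-\tau+1)\sim n^{\tau}$. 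Estimating the tail mode-by-mode, the $\tau$-seminorm of the residual is controlled by $\sum_{n>\mathcal{N}} n^{2\tau}\,\vert\hat u_n\vert^2$, and the $H^{r_0}$-regularity of $u$ (encoded in the decay of $\hat u_n$) turns this into the bound $\mathcal{N}^{2(\tau-r_0)}\Vert u\Vert_{H^{r_0}(I)}^2$; this is precisely the content of Theorem \ref{err_1} specialized to the temporal variable with $\nu_1=\tau$. Crucially the same truncation realizes this rate for every $\tau$, so no $\tau$-dependent family of projectors is needed. The hypotheses $r_0\geq 1$ and $r_0\neq \mathcal{N}+\tfrac12$ are exactly those guaranteeing that the single-order estimate is free of logarithmic losses.

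The main obstacle is the uniformity in $\tau$: one must check that both the operator and the constant $c$ can be chosen independently of $\tau$ over the whole order-interval. The operator is already $\tau$-independent by construction; for the constant, the $\tau$-dependence enters only through the Gamma ratios in \eqref{Eq: 10} and through the endpoint weight $(1+\xi)^{-\tau}$ appearing in $\Vert\prescript{}{0}{\mathcal{D}}_{t}^{\tau} P_n\Vert_{L^2}$. Since $[\tau^{min},\tau^{max}]$ is compact and these quantities depend continuously on $\tau$, their supremum is finite and may be taken as $c_0$; the integrability of the weight at the left endpoint is precisely what the restriction $2\tau<1$ secures, while in the complementary case $1<2\tau<2$ it is the vanishing initial data $u\vert_{t=0}=\tfrac{du}{dt}\vert_{t=0}=0$ built into ${}^{l,\mathfrak{D}}H^{\varphi}(I)$ that plays the same role. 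With $c_0$ so fixed, integrating the pointwise estimate against $\varphi\in L^1((\tau^{min},\tau^{max}))$ closes the argument.
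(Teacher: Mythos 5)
Your proposal is correct and follows essentially the same route as the paper: the paper states Theorem \ref{err_2} as a cited result from \cite{kharazmi2016petrov} without reproving it, but its proof of the spatial analogue (Theorem \ref{err_3}) is exactly your reduction --- write the distributed norm as an integral over the order, apply the fixed-order projection estimate (Theorem \ref{err_1}) for each order with a single order-independent projector, and integrate against the density $\varphi$. Your extra care about the uniformity of the constant $c_0$ over the compact order interval, and about using one $\tau$-independent truncation, makes explicit a point the paper's argument leaves implicit.
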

In the following, employing Theorems \ref{err_1} and \ref{err_2} and also Theorem $5.3$ from \cite{samiee2017unified1}, we study the properties of higher-dimensional approximation operators in the following Lemmas.
\begin{thm}
	\label{err_3}
	Let $r_1 \geq 1 $, $r_1\neq \mathcal{M}_1+\frac{1}{2}$. There exists a projection operator $\Pi^{\rho_1}_{r_1,\, \mathcal{M}_1}(I_1)$ from $H^{r_1}(I_1) \cap {^{l,\mathfrak{D}}}H^{\rho_1}(I_1)$ to $P^{\rho_1}_{\mathcal{M}_1}(I_1)$ such that for any $u\in H^{r_1}(I_1) \cap {^{l,\mathfrak{D}}}H^{\rho_1}(I_1)$, we have $$\Vert u-\Pi^{\rho_1}_{r_1,\, \mathcal{M}_1} u \Vert_{{^{l,\mathfrak{D}}}H^{\rho_1}(I_1)}^2\leq   \mathcal{M}_1^{-2r_1} \int_{\nu_1^{min}}^{\nu_1^{max}} \rho_1(\nu_1) \,\mathcal{M}_1^{2\nu_1}\Vert u \Vert_{H^{r_1}(I_1)}d\nu_1,$$ where $0<\rho_1(\nu_1)\in L^1((\nu_1^{min},\nu_1^{max}))$.
\end{thm}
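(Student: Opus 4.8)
The plan is to reduce the distributed-order estimate to the single-order projection bound of Theorem~\ref{err_1}, weighted and integrated against $\rho_1$, exactly mirroring how the temporal estimate of Theorem~\ref{err_2} is obtained in \cite{kharazmi2016petrov}. First I would fix $\Pi^{\rho_1}_{r_1,\mathcal{M}_1}$ to be a single projection onto $P_{\mathcal{M}_1}(I_1)$ underlying Theorem~\ref{err_1}, chosen so that its approximation error obeys $\Vert u-\Pi^{\rho_1}_{r_1,\mathcal{M}_1}u\Vert_{{^c}H^{\nu_1}(I_1)}\le c_1\,\mathcal{M}_1^{\nu_1-r_1}\Vert u\Vert_{H^{r_1}(I_1)}$ \emph{simultaneously} for every $\nu_1\in[\nu_1^{min},\nu_1^{max}]$, with $c_1$ independent of $\nu_1$. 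Since $\Pi^{\rho_1}_{r_1,\mathcal{M}_1}u$ is a polynomial of degree $\le\mathcal{M}_1$ it belongs to $P_{\mathcal{M}_1}(I_1)\cap{^{\mathfrak{D}}}H^{\rho_1}(I_1)=P^{\rho_1}_{\mathcal{M}_1}(I_1)$, so the operator maps into the required space.

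Next I would expand the distributed norm of the error directly from its definition,
\begin{equation*}
\Vert u-\Pi^{\rho_1}_{r_1,\mathcal{M}_1}u\Vert^2_{{^{l,\mathfrak{D}}}H^{\rho_1}(I_1)}=\Vert u-\Pi^{\rho_1}_{r_1,\mathcal{M}_1}u\Vert^2_{L^2(I_1)}+\int_{\nu_1^{min}}^{\nu_1^{max}}\rho_1(\nu_1)\,\Vert\prescript{}{a_1}{\mathcal{D}}_{x_1}^{\nu_1}(u-\Pi^{\rho_1}_{r_1,\mathcal{M}_1}u)\Vert^2_{L^2(I_1)}\,d\nu_1,
\end{equation*}
and control the two contributions separately. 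The $L^2$ term follows from Theorem~\ref{err_1} at $\nu_1=0$, giving $\Vert u-\Pi^{\rho_1}_{r_1,\mathcal{M}_1}u\Vert^2_{L^2(I_1)}\le c_1^2\,\mathcal{M}_1^{-2r_1}\Vert u\Vert^2_{H^{r_1}(I_1)}$. For the integrand, since $\Vert\prescript{}{a_1}{\mathcal{D}}_{x_1}^{\nu_1}(\cdot)\Vert_{L^2(I_1)}\le\Vert\cdot\Vert_{{^c}H^{\nu_1}(I_1)}$, Theorem~\ref{err_1} yields the pointwise-in-$\nu_1$ bound $\Vert\prescript{}{a_1}{\mathcal{D}}_{x_1}^{\nu_1}(u-\Pi^{\rho_1}_{r_1,\mathcal{M}_1}u)\Vert^2_{L^2(I_1)}\le c_1^2\,\mathcal{M}_1^{2\nu_1-2r_1}\Vert u\Vert^2_{H^{r_1}(I_1)}$.

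Then I would insert this bound into the integral and factor out $\mathcal{M}_1^{-2r_1}$,
\begin{equation*}
\int_{\nu_1^{min}}^{\nu_1^{max}}\rho_1(\nu_1)\,\Vert\prescript{}{a_1}{\mathcal{D}}_{x_1}^{\nu_1}(u-\Pi^{\rho_1}_{r_1,\mathcal{M}_1}u)\Vert^2_{L^2(I_1)}\,d\nu_1\le c_1^2\,\mathcal{M}_1^{-2r_1}\int_{\nu_1^{min}}^{\nu_1^{max}}\rho_1(\nu_1)\,\mathcal{M}_1^{2\nu_1}\,\Vert u\Vert^2_{H^{r_1}(I_1)}\,d\nu_1,
\end{equation*}
which is legitimate because $\nu_1\mapsto\mathcal{M}_1^{2\nu_1}$ is bounded on the finite interval $[\nu_1^{min},\nu_1^{max}]$ and $\rho_1\in L^1$, so the weighted integral converges. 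The residual $L^2$ term is absorbed into this integral: for $\mathcal{M}_1\ge1$ one has $\mathcal{M}_1^{2\nu_1}\ge1$ and $\int\rho_1\,d\nu_1>0$, so $\mathcal{M}_1^{-2r_1}\Vert u\Vert^2_{H^{r_1}}$ is bounded by a fixed multiple of the same weighted integral. Collecting the pieces gives the estimate in the form stated (and in the convention of Theorem~\ref{err_2} for the power on $\Vert u\Vert_{H^{r_1}(I_1)}$).

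The step I expect to be the genuine obstacle is the very first one: Theorem~\ref{err_1} is stated with a $\nu_1$-labelled operator $\Pi^{\nu_1}_{r_1,\mathcal{M}_1}$, whereas the distributed norm forces a \emph{single} projection valid across the entire order-range. The crux is therefore to exhibit one fixed operator whose error is uniform over $\nu_1\in[\nu_1^{min},\nu_1^{max}]$ with a $\nu_1$-independent constant; this is precisely what the construction behind Theorem~5.3 of \cite{samiee2017unified1} supplies, and I would invoke it to pin down $\Pi^{\rho_1}_{r_1,\mathcal{M}_1}$. Once uniformity is secured, the remaining measurability and absorption steps are routine.
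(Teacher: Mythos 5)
Your proposal is correct and follows essentially the same route as the paper: apply the single-order projection estimate of Theorem~\ref{err_1} pointwise in $\nu_1$ and integrate against the weight $\rho_1$. You are in fact more careful than the paper's own proof, which silently identifies the single operator $\Pi^{\rho_1}_{r_1,\,\mathcal{M}_1}$ with the $\nu_1$-dependent family $\Pi^{\nu_1}_{r_1,\,\mathcal{M}_1}$ inside the integral and does not treat the $L^2$ contribution separately --- precisely the two points you flag and resolve.
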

\begin{proof}
	From Theorem \ref{err_1} for $u\in H^{r_1}\cap\, {^c}H^{\nu_1}$ we have $\Vert u-\Pi^{\nu_1}_{r_1,\, \mathcal{M}_1} u \Vert_{{^c}H^{\nu_1}(\Lambda_1)}\leq \mathcal{M}_1^{\nu_1-r_1} \Vert u \Vert_{H^{r_1}(\Lambda_1)}.$ Therefore, for $u\in H^{r_1}(I_1) \cap {^l}H^{\rho_1}(I_1)$ we have
	\begin{align}
	& \Vert u-\Pi^{\rho_1}_{r_1,\, \mathcal{M}_1} u \Vert_{{^{l,\mathfrak{D}}}H^{\rho_1}(I_1)}^2 = \int_{\nu_1^{min}}^{\nu_1^{max}} \rho_1(\nu_1) \,\Vert u-\Pi^{\nu_1}_{r_1,\, \mathcal{M}_1} u \Vert_{{^c}H^{\nu_1}(\Lambda_1)}^2 d\nu_1
	\nonumber
	\\
	\leq & \int_{\nu_1^{min}}^{\nu_1^{max}} \rho_1(\nu_1) \, \mathcal{M}_1^{2\nu_1-2r_1} \Vert u \Vert_{H^{r_1}(\Lambda_1)}^2 d\nu_1 
	=
	\mathcal{M}_1^{-2r_1} \int_{\nu_1^{min}}^{\nu_1^{max}} \rho_1(\nu_1) \,\mathcal{M}_1^{2\nu_1}\Vert u \Vert_{H^{r_1}(I_1)}d\nu_1.
	\nonumber
	\end{align}
\end{proof}

\begin{lem}
	\label{errr_3}
	Let the real-valued $1\leq r_1, \, r_2$ and $\Omega=I_1 \times I_2$. If $u \in {^{l,\mathfrak{D}}}H^{\rho_2}(I_2,H^{r_1}(I_1))\cap H^{r_2}(I_2,{^{l,\mathfrak{D}}}H^{\rho_1}_0(I_1))$, then
	\begin{eqnarray}
	\label{err_3_1}
	&&\Vert u- \Pi^{\rho_1}_{r_1,\, \mathcal{M}_1} \Pi^{\rho_2}_{r_2,\, \mathcal{M}_2} u \Vert_{ \mathcal{B}^{\rho_1,\rho_2}(\Omega)}^2 \leq 
	\nonumber
	\\
	&& \mathcal{M}^{-2r_2}_{2} \int_{\nu_2^{min}}^{\nu_2^{max}} \rho_2(\nu_2) \,\Big(  \mathcal{M}^{2\nu_2}_{2}\Vert u \Vert_{{}H^{r_2}(I_2,L^2(I_1))} +  \mathcal{M}^{2\nu_2}_{2} \mathcal{M}^{-2r_1}_{1}  \Vert u \Vert_{{}H^{r_2}(I_2,{}H^{r_1}(I_1))}\Big) d\nu_2 
	\nonumber
	\\
	&&
	+  \mathcal{M}^{-2r_1}_{1} \int_{\nu_1^{min}}^{\nu_1^{max}} \rho_1(\nu_1) \,\Big(  \mathcal{M}^{2\nu_1}_{1}\Vert u \Vert_{{}H^{r_1}(I_1,L^2(I_2))} +  \mathcal{M}^{2\nu_1}_{1} \mathcal{M}^{-2r_2}_{2}  \Vert u \Vert_{{}H^{r_1}(I_1,{}H^{r_2}(I_2))}\Big) d\nu_1 
	\nonumber
	\\
	&&
	+ \mathcal{M}^{-2r_2}_{2}  \Vert u \Vert_{{^{\mathfrak{D}}}H^{\rho_1}(I_1,H^{r_2}(I_2))} + \mathcal{M}^{-2r_1}_{1}  \Vert u \Vert_{{^{\mathfrak{D}}}H^{\rho_2}(I_2,H^{r_1}(I_1))}, \quad \quad
	\end{eqnarray}
	where $\Vert \cdot \Vert_{ \mathcal{B}^{\rho_1,\rho_2}(\Omega)}=\big{ \{ } \Vert \cdot \Vert_{{}H^{\rho_1}(I_1,L^2(I_2))}^2+\Vert \cdot \Vert_{{}H^{\rho_2}(I_1,L^2(I_1))}^2\big{ \} }^{\frac{1}{2}}$, $0<\rho_1(\nu_1)\in L^1([\nu_1^{min},\nu_1^{max}])$, and $0<\rho_2(\nu_2)\in L^1([\nu_2^{min},\nu_2^{max}])$.
\end{lem}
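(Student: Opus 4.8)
The plan is to adapt the tensor-product argument of Theorem~5.3 in \cite{samiee2017unified1} to the distributed-order setting, using the one-dimensional estimate of Theorem~\ref{err_3} as the building block in each coordinate direction. The crucial structural observation is that $\Pi^{\rho_1}_{r_1,\mathcal{M}_1}$ and $\Pi^{\rho_2}_{r_2,\mathcal{M}_2}$ act on different variables, so they commute with one another and, more importantly, commute with the fractional derivative operators $\prescript{}{a_j}{\mathcal{D}}_{x_j}^{\nu_j}$ and $\prescript{}{x_j}{\mathcal{D}}_{b_j}^{\nu_j}$ taken in the \emph{complementary} coordinate. This lets me treat each projection as a bounded operator on the corresponding Bochner space and interchange it freely with differentiation and with the $L^2$-integration over the other interval by Fubini's theorem.

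First I would abbreviate $\Pi_1:=\Pi^{\rho_1}_{r_1,\mathcal{M}_1}$ and $\Pi_2:=\Pi^{\rho_2}_{r_2,\mathcal{M}_2}$ and introduce the splitting
\begin{equation}
\nonumber
u-\Pi_1\Pi_2 u = (u-\Pi_1 u) + (u-\Pi_2 u) - (u-\Pi_1 u-\Pi_2 u+\Pi_1\Pi_2 u),
\end{equation}
which separates the error into a pure $I_1$-part, a pure $I_2$-part, and a genuinely mixed part. Since $\Vert\cdot\Vert_{\mathcal{B}^{\rho_1,\rho_2}(\Omega)}^2=\Vert\cdot\Vert_{{^{\mathfrak{D}}}H^{\rho_1}(I_1,L^2(I_2))}^2+\Vert\cdot\Vert_{{^{\mathfrak{D}}}H^{\rho_2}(I_2,L^2(I_1))}^2$, I would estimate each of the three pieces in each of the two norm components, producing six contributions that must be matched against the right-hand side of \eqref{err_3_1}.

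For the pure pieces I would apply Theorem~\ref{err_3} directly in the active direction, fibered over the inactive interval: the $I_1$-part measured in $\Vert\cdot\Vert_{{^{\mathfrak{D}}}H^{\rho_1}(I_1,L^2(I_2))}$ yields the contribution $\mathcal{M}_1^{-2r_1}\int\rho_1(\nu_1)\mathcal{M}_1^{2\nu_1}\Vert u\Vert_{H^{r_1}(I_1,L^2(I_2))}\,d\nu_1$, and symmetrically for the $I_2$-part. Measuring the pure $I_1$-part in the \emph{other} norm $\Vert\cdot\Vert_{{^{\mathfrak{D}}}H^{\rho_2}(I_2,L^2(I_1))}$ (and conversely), where no approximation occurs in the active coordinate, produces the two remainder terms $\mathcal{M}_1^{-2r_1}\Vert u\Vert_{{^{\mathfrak{D}}}H^{\rho_2}(I_2,H^{r_1}(I_1))}$ and $\mathcal{M}_2^{-2r_2}\Vert u\Vert_{{^{\mathfrak{D}}}H^{\rho_1}(I_1,H^{r_2}(I_2))}$ after commuting the complementary distributed derivatives past the inactive projection. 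For the mixed piece I would apply Theorem~\ref{err_3} in one direction and the standard $L^2$-projection estimate $\Vert(u-\Pi_2 u)\Vert_{L^2(I_2)}\le c\,\mathcal{M}_2^{-r_2}\Vert u\Vert_{H^{r_2}(I_2)}$ (the $\nu=0$ instance of the Maday-type bound underlying Theorem~\ref{err_1}) in the other, which supplies the clean $\mathcal{M}_2^{-2r_2}$ and $\mathcal{M}_1^{-2r_1}$ factors multiplying the mixed-regularity terms $\Vert u\Vert_{H^{r_1}(I_1,H^{r_2}(I_2))}$ and $\Vert u\Vert_{H^{r_2}(I_2,H^{r_1}(I_1))}$.

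The main obstacle I anticipate is the bookkeeping of the mixed term: one must use the \emph{distributed} estimate (carrying the $\int\rho\,\mathcal{M}^{2\nu}$ factor) in the active coordinate but only the plain $L^2$-rate in the passive coordinate, so that the two exponents of $\mathcal{M}_1$ and $\mathcal{M}_2$ combine into precisely the product decay appearing in \eqref{err_3_1} rather than a double distributed integral. The secondary point requiring care is the rigorous justification that each projection commutes with the complementary fractional derivatives and with the inner $L^2$-integration, so that Fubini's theorem and the fibered one-dimensional estimates of Theorem~\ref{err_3} may be invoked termwise; collecting the six resulting contributions then yields \eqref{err_3_1}.
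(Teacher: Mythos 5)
Your proposal is correct and follows essentially the same route as the paper: the paper splits $u-\Pi_1\Pi_2 u$ in each norm component as $(u-\Pi_2 u)+(\Pi_2-\mathcal{I})(u-\Pi_1 u)+(u-\Pi_1 u)$ (and symmetrically), which is your pure/pure/mixed decomposition after regrouping, and it then bounds the pure piece by Theorem \ref{err_3} in the active coordinate, the mixed piece by the distributed estimate in the active coordinate times the plain $L^2$-projection rate in the passive one, and the inactive pure piece by the remainder terms $\mathcal{M}_1^{-2r_1}\Vert u\Vert_{{^{\mathfrak{D}}}H^{\rho_2}(I_2,H^{r_1}(I_1))}$ and $\mathcal{M}_2^{-2r_2}\Vert u\Vert_{{^{\mathfrak{D}}}H^{\rho_1}(I_1,H^{r_2}(I_2))}$, exactly as you describe.
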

\begin{proof}
	For $u \in {^{l,\mathfrak{D}}}H^{\rho_2}(I_2,H^{r_1}(I_1))\cap H^{r_2}(I_2,{}H^{\rho_1}(I_1))$, evidently $u \in {}H^{r_2}(I_2,H^{r_1}(I_1))$, $u \in {}H^{r_2}(I_2,L^2(I_1))$, and $u \in {}H^{r_1}(I_1,L^2(I_2))$.
	Besides, from the definition of $\Vert \cdot \Vert_{ \mathcal{B}^{\rho_1,\rho_2}(\Omega)}$ we have
	\begin{align*}
	& \Vert u- \Pi^{\rho_1}_{r_1,\, \mathcal{M}_1} \Pi^{\rho_2}_{r_2,\, \mathcal{M}_2} u \Vert_{ \mathcal{B}^{\rho_1,\rho_2}(\Omega)}
	\\
	=
	& \big{ \{ } \Vert u- \Pi^{\rho_1}_{r_1,\, \mathcal{M}_1} \Pi^{\rho_2}_{r_2,\, \mathcal{M}_2} u \Vert_{{^{\mathfrak{D}}}H^{\rho_1}(I_1,L^2(I_2))}^2+\Vert u- \Pi^{\rho_1}_{r_1,\, \mathcal{M}_1} \Pi^{\rho_2}_{r_2,\, \mathcal{M}_2} u \Vert_{{^{\mathfrak{D}}}H^{\rho_2}(I_1,L^2(I_1))}^2\big{ \} }^{\frac{1}{2}}.
	\end{align*}
	Following Lemma 5.3 in \cite{samiee2017unified1} and Theorem \ref{err_3}, $\Vert u- \Pi^{\rho_1}_{r_1,\, \mathcal{M}_1} \Pi^{\rho_2}_{r_2,\, \mathcal{M}_2} u \Vert_{{^{\mathfrak{D}}}H^{\rho_2}(I_2,L^2(I_1))}^2$ can be simplified to
	\begin{align}
	\label{111334}
	&\Vert u- \Pi^{\rho_1}_{r_1,\, \mathcal{M}_1} \Pi^{\rho_2}_{r_2,\, \mathcal{M}_2} u \Vert_{{^{\mathfrak{D}}}H^{\rho_2}(I_2,L^2(I_1))}^2 
	\nonumber
	\\
	= &
	\Vert u- \Pi^{\rho_2}_{r_2,\, \mathcal{M}_2} u + \Pi^{\rho_2}_{r_2,\, \mathcal{M}_2} u- \Pi^{\rho_1}_{r_1,\, \mathcal{M}_1} \Pi^{\rho_2}_{r_2,\, \mathcal{M}_2} u \Vert_{{^{\mathfrak{D}}}H^{\rho_2}(I_2,L^2(I_1))}^2
	\nonumber
	\\
	\leq  &
	\Vert u- \Pi^{\rho_2}_{r_2,\, \mathcal{M}_2} u \Vert_{{^{\mathfrak{D}}}H^{\rho_2}(I_2,L^2(I_1))}^2 + \Vert \Pi^{\rho_2}_{r_2,\, \mathcal{M}_2} u- \Pi^{\rho_1}_{r_1,\, \mathcal{M}_1} \Pi^{\rho_2}_{r_2,\, \mathcal{M}_2} u \Vert_{{^{\mathfrak{D}}}H^{\rho_2}(I_2,L^2(I_1))}^2
	\nonumber
	\\
	\leq &
	\mathcal{M}^{-2r_2}_2 \int_{\nu_2^{min}}^{\nu_2^{max}} \rho_2(\nu_2) \, \mathcal{M}^{2\nu_2}_2 \Vert u \Vert_{{}H^{r_2}(I_2,L^2(I_1))}^2 d\nu_2 
	\nonumber
	\\
	& \,\,
	+ \Vert (\Pi^{\rho_2}_{r_2,\, \mathcal{M}_2} - \mathcal{I}) (u- \Pi^{\rho_1}_{r_1,\, \mathcal{M}_1} u) \Vert_{{^{\mathfrak{D}}}H^{\rho_2}(I_2,L^2(I_1))}^2
	+ 
	\Vert u-\Pi^{\rho_1}_{r_1,\, \mathcal{M}_1} u \Vert_{{^{\mathfrak{D}}}H^{\rho_2}(I_2,L^2(I_1))}^2
	\nonumber
	\\
	\leq &
	\mathcal{M}^{-2r_2}_2 \int_{\nu_2^{min}}^{\nu_2^{max}} \rho_2(\nu_2) \, \mathcal{M}^{2\nu_2}_2 \Vert u \Vert_{{}H^{r_2}(I_2,L^2(I_1))}^2 d\nu_2  
	\nonumber
	\\
	& \,\,
	+  \mathcal{M}^{-2r_2}_{2} \mathcal{M}^{-2r_1}_{1}  \int_{\nu_2^{min}}^{\nu_2^{max}} \rho_2(\nu_2) \, \mathcal{M}^{2\nu_2}_2 \Vert u \Vert_{{}H^{r_2}(I_2,H^{r_1}(I_1))}^2 d\nu_2   
	+   
	\mathcal{M}^{-2r_1}_{1}  \Vert u \Vert_{{^{\mathfrak{D}}}H^{\rho_2}(I_2,{}H^{r_1}(I_1))}^2,
	\end{align}
	where $\mathcal{I}$ is the identity operator. Furthermore,
	\begin{align}
	\label{11133}
	&\Vert u- \Pi^{\rho_1}_{r_1,\, \mathcal{M}_1} \Pi^{\rho_2}_{r_2,\, \mathcal{M}_2} u \Vert_{L^2(I_2,{}H^{\rho_1}(I_1))}^2  
	\nonumber
	\\
	= & \Vert u- \Pi^{\rho_1}_{r_1,\, \mathcal{M}_1} u + \Pi^{\rho_1}_{r_1,\, \mathcal{M}_1} u- \Pi^{\rho_1}_{r_1,\, \mathcal{M}_1} \Pi^{\rho_2}_{r_2,\, \mathcal{M}_2} u \Vert_{{^{\mathfrak{D}}}H^{\rho_1}(I_1,L^2(I_2))}^2
	\nonumber
	\\
	\leq  &
	\Vert u- \Pi^{\rho_1}_{r_1,\, \mathcal{M}_1} u \Vert_{{^{\mathfrak{D}}}H^{\rho_1}(I_1,L^2(I_2))}^2 + \Vert \Pi^{\rho_1}_{r_1,\, \mathcal{M}_1} u- \Pi^{\rho_1}_{r_1,\, \mathcal{M}_1} \Pi^{\rho_2}_{r_2,\, \mathcal{M}_2} u \Vert_{{^{\mathfrak{D}}}H^{\rho_1}(I_1,L^2(I_2))}^2
	\nonumber
	\\
	\leq &
	\mathcal{M}^{-2r_1}_1 \int_{\nu_1^{min}}^{\nu_1^{max}} \rho_1(\nu_1) \, \mathcal{M}^{2\nu_1}_1 \Vert u \Vert_{{}H^{r_1}(I_1,L^2(I_2))}^2 d\nu_1 
	\nonumber
	\\
	& \,\,
	+ \Vert (\Pi^{\rho_1}_{r_1,\, \mathcal{M}_1} - \mathcal{I}) (u- \Pi^{\rho_2}_{r_2,\, \mathcal{M}_2} u) \Vert_{{^{\mathfrak{D}}}H^{\rho_1}(I_1,L^2(I_2))}^2
	+ \Vert u-\Pi^{\rho_2}_{r_2,\, \mathcal{M}_2} u \Vert_{{^{\mathfrak{D}}}H^{\rho_1}(I_1,L^2(I_2))}^2
	\nonumber
	\\
	\leq &
	\mathcal{M}^{-2r_1}_1 \int_{\nu_1^{min}}^{\nu_1^{max}} \rho_1(\nu_1) \, \mathcal{M}^{2\nu_1}_1 \Vert u \Vert_{{}H^{r_1}(I_1,L^2(I_2))}^2 d\nu_1   
	\nonumber
	\\
	& \,\,
	+  \mathcal{M}^{-2r_2}_{2} \mathcal{M}^{-2r_1}_{1}  \int_{\nu_1^{min}}^{\nu_1^{max}} \rho_1(\nu_1) \, \mathcal{M}^{2\nu_1}_1 \Vert u \Vert_{{}H^{r_1}(I_1,H^{r_2}(I_2))}^2 d\nu_1   
	+  \mathcal{M}^{-2r_2}_{2}  \Vert u \Vert_{{^{\mathfrak{D}}}H^{\rho_1}(I_1,{}H^{r_2}(I_2))}^2.
	\end{align}
	Therefore, \eqref{err_3_1} can be derived immediately from \eqref{11133} and \eqref{111334}.
\end{proof}

\noindent Likewise, Lemma \ref{err_3} can be easily extended to the $d$-dimensional approximation operator as
\begin{align}
\label{err_5}
&\Vert u-\Pi_d^h u \Vert_{{^{\mathfrak{D}}}H^{\rho_i}(I_i,L^2(\Lambda_d^i))}^2 
\nonumber
\\
&\leq 
\mathcal{M}_i^{-2r_i} \int_{\nu_i^{min}}^{\nu_i^{max}} \rho_i(\nu_i) \, \mathcal{M}^{2\nu_i}_i \Vert u \Vert_{{}H^{r_i}(I_i,L^2(\Lambda_d^i))}^2 d\nu_i+\sum_{\underset{j\neq i}{j=1}}^d \mathcal{M}_j^{-2r_j}\Vert u \Vert_{{^{\mathfrak{D}}}H^{\rho_i}(I_i,H^{r_j}(I_j,L^2(\Lambda_d^{i,j})))}^2
\nonumber
\\
&
+\mathcal{M}_i^{-2r_i} \int_{\nu_i^{min}}^{\nu_i^{max}} \rho_i(\nu_i) \, \mathcal{M}^{2\nu_i}_i \sum_{\underset{j\neq i}{j=1}}^d \mathcal{M}_j^{-2r_j}\Vert u \Vert_{{}H^{r_i}(I_i,H^{r_j}(I_j,L^2(\Lambda_d^{i,j})))}^2 d\nu_i
\nonumber
\\
&+\sum_{\underset{k\neq i}{k=1}}^d \sum_{\underset{j\neq i, \, k}{j=1}}^d \mathcal{M}_j^{-2r_j} \mathcal{M}_k^{-2r_k}\Vert u \Vert_{{^{\mathfrak{D}}}H^{\rho_i}(I_i,H^{r_k,r_j}(I_k\times I_j,L^2(\Lambda_d^{i,j,k}))))}^2
\nonumber
\\
&
+\cdots + \mathcal{M}_i^{-2r_i} \int_{\nu_i^{min}}^{\nu_i^{max}} \rho_i(\nu_i) \, \mathcal{M}^{2\nu_i}_i \prod_{\underset{j\neq i}{j=1}}^{d} \mathcal{M}_j^{-r_j} \Vert u \Vert_{{^c}H^{\nu_i}(I_i,H^{r_1,\cdots,r_d}(\Lambda_d^i)))}^2 d\nu_i,
\end{align}
where $\Pi^{h}_{d}=\Pi^{\rho_1}_{r_1,\, \mathcal{M}_1}\cdots \Pi^{\rho_d}_{r_d,\, \mathcal{M}_d}$.

\begin{thm}
	\label{thmerr}
	Let $1\leq r_i$, $I_0=(0,T)$, $I_i=(a_i,b_i)$, $\Omega=I_0 \times \Big(\prod_{i=1}^{d}I_i\Big)$, $\Lambda_k=\prod_{i=1}^{k}I_i$, $\Lambda_k^j=\prod_{\underset{i\neq j}{i=1}}^{k}I_i$ and $\frac{1}{2}<\nu_i^{min}<\nu_i^{max}<1$ for $i=1,\cdots,d$. If 
	\begin{align*}
	u \in  \Big(\overset{d}{\underset{i=1}{\cap}} H^{r_0}(I_0,{^{\mathfrak{D}}}H^{\rho_i}(I_i,{}H^{r_1,\cdots,r_{i-1},r_{i+1},\cdots,r_d}(\Lambda_d^i))\Big)\cap {^{l,\mathfrak{D}}}H^{\varphi}(I_0,H^{r_1,\cdots,r_d}(\Lambda_d)),
	\end{align*}
	then,
	\begin{align}
	\label{them111}
	\Vert u- \Pi^{\varphi}_{r_0,\, \mathcal{N}} \Pi^{h}_{d} u \Vert_{ \mathcal{B}^{\tau,\nu_1,\cdots,\nu_d}(\Omega)}^2
	\nonumber 
	& \leq
	\mathcal{N}^{-2r_0} \int_{\tau^{min}}^{\tau^{max}} \varphi(\tau) \,\mathcal{N}^{2\tau} \Vert u \Vert_{{}H^{r_0}(I_0,L^2(\Lambda_d))} d\tau
	\nonumber \\
	& 
	+\mathcal{N}^{-2r_0} \int_{\tau^{min}}^{\tau^{max}} \varphi(\tau) \,\mathcal{N}^{2\tau}\sum_{j=1}^{d} \mathcal{M}_j^{-2r_j} \Vert u \Vert_{{}H^{r_0}(I_0,H^{r_j}(I_j,L^2(\Lambda_d^j)))}^2 d\tau+                       \cdots 
	\nonumber
	\\
	& 
	+ \mathcal{N}^{-2r_0} \int_{\tau^{min}}^{\tau^{max}} \varphi(\tau) \,\mathcal{N}^{2\tau} \Big( \prod_{\underset{}{j=1}}^{d} \mathcal{M}_j^{-2r_j} \Big) \Vert u \Vert_{{}H^{r_0}(I_0,H^{r_1,\cdots,r_d}(\Lambda_d)))}d\tau
	\nonumber
	\\
	&
	+
	\sum_{i=1}^{d} \int_{\nu_i^{min}}^{\nu_i^{max}} \rho_i(\nu_i) \Big{\{}\mathcal{M}_i^{2\nu_i-2r_i} \Vert u \Vert_{{}H^{r_i}(I_i,L^2(\Lambda_d^i\times I_0))}+\cdots 
	\nonumber
	\\
	& 
	+ \mathcal{M}_i^{2\nu_i-2r_i} \Big( \prod_{\underset{j\neq i, \, k}{j=1}}^{d} \mathcal{M}_j^{-2r_j} \Big) \Vert u \Vert_{{}H^{r_i}(I_i,H^{r_1,\cdots,r_d}(\Lambda_d^i,L^2(I_0)))}\Big{\}}d\nu_i,
	\end{align}
	where $\Pi^{h}_{d}=\Pi^{\rho_1}_{r_1,\, \mathcal{M}_1}\cdots \Pi^{\rho_d}_{r_d,\, \mathcal{M}_d}$ and $\beta$ is a real positive constant.
\end{thm}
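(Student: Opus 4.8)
The plan is to estimate the two constituent pieces of $\Vert\cdot\Vert_{\mathcal{B}^{\tau,\nu_1,\cdots,\nu_d}(\Omega)}$ separately. By the norm decomposition \eqref{norm_22213} together with Lemma \ref{norm_221}, the squared $\mathcal{B}$-norm of the error $e:=u-\Pi^{\varphi}_{r_0,\mathcal{N}}\Pi^{h}_d u$ splits into the temporal distributed-Sobolev seminorm $\Vert e\Vert_{{^{l,\mathfrak{D}}}H^{\varphi}(I_0;L^2(\Lambda_d))}^2$ plus the spatial piece $\Vert e\Vert_{L^2(I_0;\mathcal{X}_d)}^2$. The structural fact I would exploit throughout is that $\Pi^{\varphi}_{r_0,\mathcal{N}}$ acts only on $t\in I_0$ while $\Pi^{h}_d=\Pi^{\rho_1}_{r_1,\mathcal{M}_1}\cdots\Pi^{\rho_d}_{r_d,\mathcal{M}_d}$ acts only on the spatial variables; since they operate on disjoint variable groups and each is idempotent, they commute, and each factor commutes with differentiation in the complementary variables. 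This is precisely what allows the telescoping of Lemma \ref{errr_3} to close in the present mixed setting.

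For the temporal piece I would insert $\Pi^{\varphi}_{r_0,\mathcal{N}}u$ and write
\[
u-\Pi^{\varphi}_{r_0,\mathcal{N}}\Pi^{h}_d u=(\mathcal{I}-\Pi^{\varphi}_{r_0,\mathcal{N}})u+\Pi^{\varphi}_{r_0,\mathcal{N}}(\mathcal{I}-\Pi^{h}_d)u .
\]
The first term is controlled by the vector-valued form of Theorem \ref{err_2} with values in $L^2(\Lambda_d)$, producing the leading estimate $\mathcal{N}^{-2r_0}\int_{\tau^{min}}^{\tau^{max}}\varphi(\tau)\,\mathcal{N}^{2\tau}\Vert u\Vert_{H^{r_0}(I_0,L^2(\Lambda_d))}\,d\tau$. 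For the second term I would invoke stability of $\Pi^{\varphi}_{r_0,\mathcal{N}}$ in the $\prescript{l,\mathfrak{D}}{}H^{\varphi}$-norm and then peel off the spatial projections $\Pi^{\rho_j}_{r_j,\mathcal{M}_j}$ one direction at a time exactly as in \eqref{err_5}; each peeled factor contributes a $\mathcal{M}_j^{-2r_j}$ and relocates the residual into an anisotropic norm $H^{r_j}(I_j,\cdots)$, which generates the successive cross terms in the second and third lines of \eqref{them111}, up to the full product $\prod_{j}\mathcal{M}_j^{-2r_j}$.

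For the spatial piece I would decompose in the opposite order,
\[
u-\Pi^{\varphi}_{r_0,\mathcal{N}}\Pi^{h}_d u=(\mathcal{I}-\Pi^{h}_d)u+(\mathcal{I}-\Pi^{\varphi}_{r_0,\mathcal{N}})\Pi^{h}_d u ,
\]
and measure in $L^2(I_0;\mathcal{X}_d)$. The first term is handled directly by the $d$-dimensional spatial bound \eqref{err_5} (equivalently, by iterating Lemma \ref{errr_3} over the $d$ spatial factors), which yields precisely the $\sum_i\int_{\nu_i^{min}}^{\nu_i^{max}}\rho_i(\nu_i)\,\mathcal{M}_i^{2\nu_i-2r_i}(\cdots)\,d\nu_i$ block in the final two lines of \eqref{them111}. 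The remaining cross term carries a factor $\mathcal{N}^{-2r_0}$ from $(\mathcal{I}-\Pi^{\varphi}_{r_0,\mathcal{N}})$ and, being of higher order in $\mathcal{N}$, is dominated by and absorbed into the temporal block already accounted for; here one uses that $\Pi^{h}_d$ is stable in $\mathcal{X}_d$ so that its presence does not spoil the rate. Adding the temporal and spatial estimates then gives \eqref{them111}.

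The main obstacle, as in Lemma \ref{errr_3}, is the bookkeeping of the mixed-regularity cross terms. Each peeling step must move the residual into the correct anisotropic Sobolev space $H^{r_i}(I_i,H^{r_j}(I_j,\cdots))$ and faithfully track the accumulated decay factors, and one must verify at every stage that the relevant one-dimensional operator $(\mathcal{I}-\Pi^{\rho_j}_{r_j,\mathcal{M}_j})$, respectively $(\mathcal{I}-\Pi^{\varphi}_{r_0,\mathcal{N}})$, is simultaneously stable and approximation-accurate in that intermediate norm. The hypotheses $r_i\geq 1$, $r_i\neq\mathcal{M}_i+\tfrac12$, $r_0\neq\mathcal{N}+\tfrac12$, and $\tfrac12<\nu_i^{min}<\nu_i^{max}<1$ are exactly what permit invoking Theorems \ref{err_2} and \ref{err_3} at each such step, while the commutativity noted above is what guarantees the telescoping collapses to the stated finite sum of products.
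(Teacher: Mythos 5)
Your proposal follows the same route as the paper's proof: split $\Vert\cdot\Vert_{\mathcal{B}^{\tau,\nu_1,\cdots,\nu_d}(\Omega)}$ into the temporal and spatial blocks via \eqref{norm_22213}, control the temporal block by the vector-valued form of Theorem \ref{err_2} combined with peeling off the spatial projectors one direction at a time (this is exactly the content of \eqref{err_6}), and control the spatial block by the $d$-dimensional estimate \eqref{err_5}. The additional care you take with the cross term $(\mathcal{I}-\Pi^{\varphi}_{r_0,\,\mathcal{N}})\Pi^{h}_{d}u$ and with the commutativity and stability of the projectors is detail the paper leaves implicit, but the decomposition and the key lemmas invoked are the same.
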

\begin{proof}
	Directly from \eqref{norm_22213} we conclude that
	\begin{eqnarray}
	\label{2233}
	&&\Vert u
	%- \Pi^{\tau}_{r_0,\, \mathcal{N}} \Pi^{h}_{d} u 
	\Vert_{ \mathcal{B}^{\tau,\nu_1,\cdots,\nu_d}(\Omega)}^2 
	\leq \Vert u \Vert_{{^l}H^{\tau}(I_0,L^2(\Lambda_d))}^2+\sum_{i=1}^{d}\Vert u \Vert_{L^2(I_0,{^{\mathfrak{D}}}H^{\rho_i}(I_i,L^2(\Lambda_d^i)))}^2.
	\nonumber
	\end{eqnarray}
	Next, it follows from Theorem \ref{err_2} that
	\begin{align}
	\label{err_6}
	& \Vert u-\Pi^{\varphi}_{r_0,\, \mathcal{N}}\Pi_d^h u \Vert_{{^{l,\mathfrak{D}}}H^{\varphi}(I_0,L^2(\Lambda_d))}^2 
	\nonumber 
	\\
	& 
	\leq \mathcal{N}^{-2r_0} \int_{\tau_{min}}^{\tau_{max}} \varphi(\tau)\, \mathcal{N}^{2\tau}\,\Bigg{[} \Vert u \Vert_{{}H^{r_0}(I_0,L^2(\Lambda_d))}^2 +
	\sum_{j=1}^{d}\mathcal{M}_j^{-r_j} \Vert u \Vert_{{}H^{r_0}(I_0,H^{r_j}(I_j,L^2(\Lambda_d)))}^2 + \cdots 
	\nonumber
	\\
	&
	\qquad \qquad \qquad \qquad \qquad \qquad+
	\Big( \prod_{\underset{}{j=1}}^{d} \mathcal{M}_j^{-r_j} \Big) \Vert u \Vert_{{}H^{r_0}(I_0,H^{r_1,\cdots,r_d}(\Lambda_d)))}\Bigg{]}
	\, d\tau.
	\end{align}
	Therefore, \eqref{them111} is obtained immediately from \eqref{err_5} and \eqref{err_6}.	
\end{proof}

\begin{rem}
	Since the \text{inf-sup} condition holds (see Theorem \ref{Thm: inf-sup_3}), by Lemma \ref{continuity_lem},
	%the Banach-Ne\v{c}as-Babu\v{s}ka theorem \citep{ern2013theory},
	the error in the numerical scheme is less than or equal to a constant times the projection error. Hence the results above imply the spectral accuracy of the scheme.
\end{rem}

%
%%%%%%%%%%%%%%%%%%%%%%%%%%%%%%%%%
\section{Numerical Tests}
\label{Illustration of Results}
%%%%%%%%%%%%%%%%%%%%%%%%%%%%%%%%%
%
We provide several numerical examples to investigate the performance of the proposed scheme. We consider a $(1+d)$-dimensional fully distributed diffusion problem with left-sided derivative by letting $c_{l_i}=c_{r_i}=\kappa_{r_i}=0$, $\kappa_{l_i}=1$, $0<2\tau^{min}<2\tau^{max}<1$ and $1<2\nu^{min}_i<2\nu^{max}_i<2$ in \eqref{Eq: infinit-dim PG method_1111} for $i=1,\cdots,d$, where the computational domain is $\Omega = (0,2) \times \prod_{i=1}^{d} (-1,1)$. We report the measured $L^{\infty}$ error, $\Vert e\Vert_{L^{\infty}}=\Vert u_N - u^{ext}\Vert_{L^{\infty}}$.

In each of the following test cases, we use the method of fabricated solutions to construct the load vector, given an exact solution $u^{ext}$. Here, we assume $u^{ext} = u_t  \times \prod_{i=1}^{d} u_{x_i}$. We project the spatial part in each dimension, $u_{x_i}$, on the spatial bases, and then, construct the load vector by plugging the projected exact solution into the weak form of problem. This helps us take the fractional derivative of exact solution more efficiently, while by truncating the projection with a sufficient number of terms, we make sure that the corresponding projection error does not dominantly propagate into the convergence analysis of numerical scheme.

\vspace{0.2 in}
\noindent \textbf{Case I:} We consider a smooth solution in space with finite regularity in time as 
\begin{equation}
\label{test I}
u^{ext}=t^{p_1 + \alpha}\times \Big( (1+x_1)^{p_2} (1-x_1)^{p_3} \Big)
\end{equation} 
to investigate the spatial/temporal \textit{p}-refinement. We allow the singularity to take order of $\alpha = 10^{-4}$, while $p_1$, $p_2$, and $p_3$ take some integer values. We show the $L^{\infty}$-error for different test cases in Fig. \ref{Fig: Case I}, where by tuning the fractional parameter of the temporal basis, we can accurately capture the singularity of the exact solution, when the approximate solution converges as we increase the expansion order. In each case of spatial/temporal \textit{p}-refinement, we choose sufficient number of bases in other directions to make sure their corresponding error is of machine precision order. We also note that the proposed method efficiently converges, however, as the order of singularity $\alpha$ increases, the rate of convergences slightly drops, see the dashed lines in Fig. \ref{Fig: Case I}.

%In Table \ref{my-label} Case A, we study the spectral convergence rate of the method for smooth solutions with different regularities in \eqref{test I} for (1+1)-dimensional diffusion problem. 
Considering $\alpha=10^{-4}$, $p_1=2$, $p_2=p_3=2$ in \eqref{test I}, and the temporal order of expansion being fixed ($\mathcal{N}=4$) in the spatial \textit{p}-refinement, we get 
%the error in spatial direction approaches to the exact solution sufficiently and hence 
the rate of convergence as a function of the minimum regularity in the spatial direction. From Theorem \ref{thmerr}, the rate of convergence is bounded by the spatial approximation error, i.e. $\Vert e \Vert_{L^2(\Omega)}\leq \Vert e \Vert_{L^\infty(\Omega)} \leq  \mathcal{M}_1^{-2r_1} \int_{\nu_1^{min}}^{\nu_1^{max}} \rho_1(\nu_1) \,\mathcal{M}_1^{2\nu_1} \Vert u \Vert_{{}H^{r_1}(I_1,L^2(I_0))} d\nu_1$, where $r_1=p_2+\frac{1}{2}-\epsilon$ is the minimum regularity of the exact solution in the spatial direction for $\epsilon<\frac{1}{2}$. Conforming to Theorem \ref{thmerr}, the practical rate of convergence $\bar{r}_1=16.05$ in $\Vert e \Vert_{L^\infty(\Omega)}$ is greater than $r_1\approx 2.50$.

%
%
% %which justifies the greater value of $r$ in $\Vert e \Vert_{L^2(\Omega)}$ relative to the corresponding one in $\Vert e \Vert_{\mathcal{B}^{\tau,\nu}(\Omega)}$ in Table \ref{Table: Higher-Dimensional FPDEs}.
%
%Similarly, we compare the practical rate of convergence $\bar{r}_0$ and the estimated one $r_0$, provided in \eqref{thmerr}, for solutions with different regularities in  \eqref{}. We kept the temporal order of expansion fixed ($\mathcal{N}=4$) that the approximated solution $u_N$ in temporal direction approaches to the exact solution sufficiently and hence $\Vert e \Vert_{L^{\infty}(\Omega)}$ is dominated by the spatial error, i.e., $\Vert e \Vert_{L^2(\Omega)}\leq \Vert e \Vert_{L^\infty(\Omega)} \leq  \mathcal{M}_1^{-2r_1} \int_{\nu_1^{min}}^{\nu_1^{max}} \rho_1(\nu_1) \,\mathcal{M}_1^{2\nu_1} \Vert u \Vert_{{}H^{r_1}(I_1,L^2(I_0))} d\nu_1$, where $r_1=p_2+\frac{1}{2}-\epsilon$. Conforming to Theorem \ref{thmerr}, the practical rates of convergence $\bar{r}_1$, provided in Table \ref{Table: Higher-Dimensional FPDEs} Case B, in $\Vert e \Vert_{L^\infty(\Omega)}$ is greater than $r_1 \approx $.
%******************************************************************************************
%
\begin{figure}[h]
	\centering
	\begin{subfigure}{0.32\textwidth}
		\centering
		\includegraphics[width=1\linewidth]{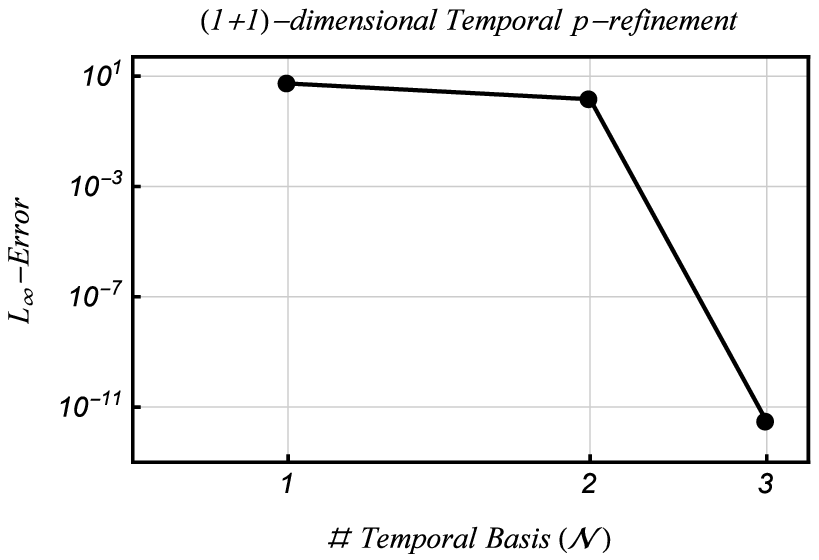}
		%		\caption{}
		%\label{fig:}
	\end{subfigure}
	\begin{subfigure}{0.32\textwidth}
		\centering
		\includegraphics[width=1\linewidth]{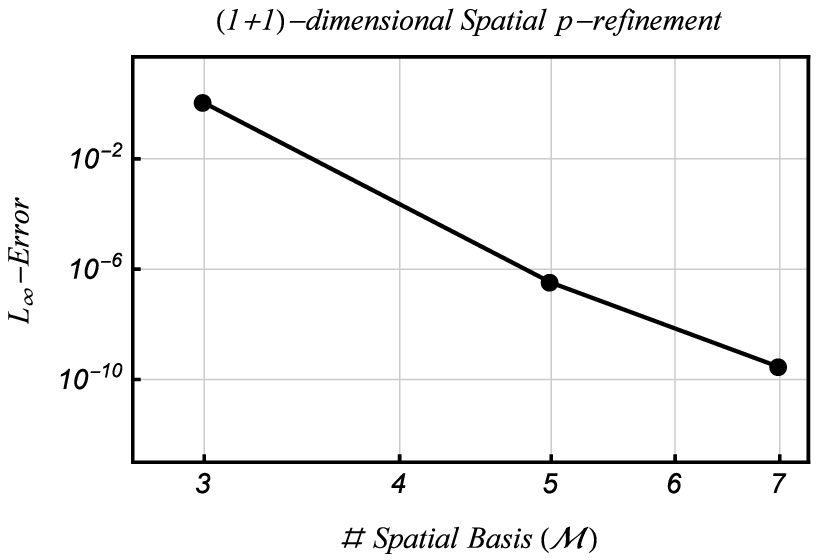}
		%		\caption{}
		%\label{fig:}
	\end{subfigure}
	\begin{subfigure}{0.32\textwidth}
		\centering
		\includegraphics[width=1\linewidth]{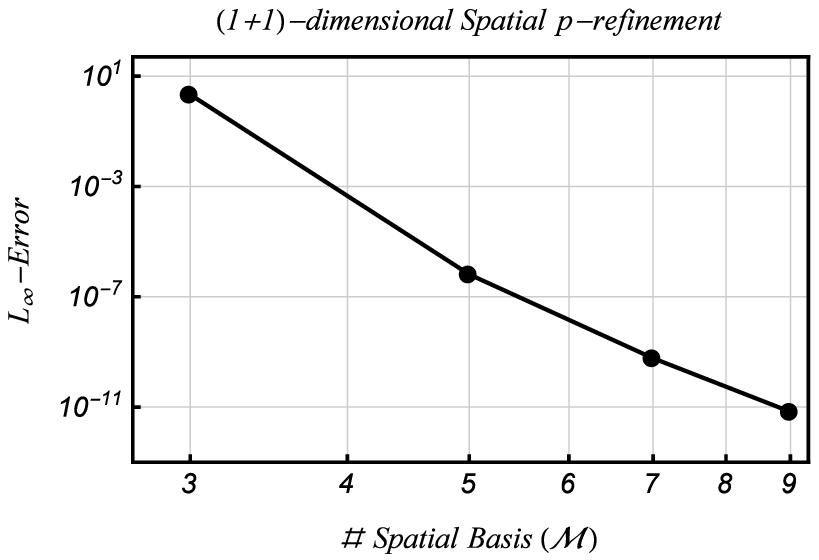}
		%		\caption{}
		%\label{fig:}
	\end{subfigure}
	%	\vspace{-0.15 in}
	\caption{Temporal/Spatial \textit{p-refinement} for case I with singularity of order $\alpha = 10^{-4}$. (Left): $p_1=3$, $p_2=p_3=2$, and expansion order of $\mathcal{N} \times 9$. (Middle): $p_1=2$, $p_2=p_3=2$, and expansion order of $3 \times \mathcal{M}$. (Right): $p_1=3$, $p_2=p_3=2$, and expansion order of $4 \times \mathcal{M}$. }
	\label{Fig: Case I}
\end{figure}
%
%******************************************************************************************
%

%\vspace{0.1 in}
\noindent \textbf{Case II:} We consider $u^{ext}=t^{p_1 + \alpha} \sin ( 2 \pi x_1 )$, where $p_1=3$, and let $\alpha = 0.1$ and $\alpha = 0.9$. We set the number of temporal basis functions, $\mathcal{N} = 4$, and show the convergence of approximate solution by increasing the number of spatial basis, $\mathcal{M}$ in Fig. \ref{Fig: Case II}. The main difficulty in this case is the construction of the load vector. To accurately compute the integrals in the construction of the load vector, we project the spatial part of the forcing function, $\sin ( 2 \pi x_1 )$, on the spatial bases. To make sure that the corresponding error is of machine-precision order and thus, not dominant, we truncate the projection at 25 terms, where there error is of order $10^{-16}$. Therefore, the quadrature rule over derivative order should be performed for 25 terms rather than only a single $\sin ( 2 \pi x_1 )$ term. This will increase the computational cost.
%
%******************************************************************************************
%
\begin{figure}[h]
	\centering
	\begin{subfigure}{0.45\textwidth}
		\centering
		\includegraphics[width=1\linewidth]{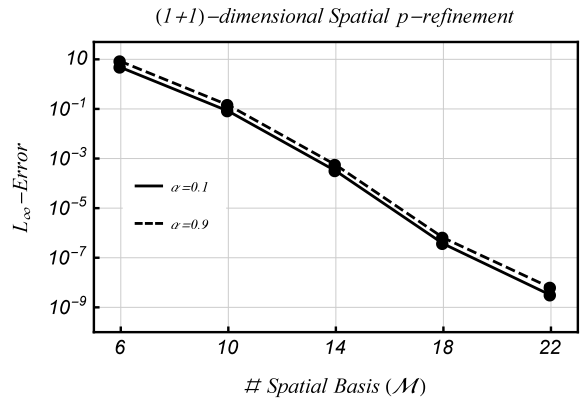}
		%\caption{}
		%\label{fig:}
	\end{subfigure}
	\caption{Spatial \textit{p}-refinement for case II, $p_1=3$, $\alpha = 0.1$, and $\alpha = 0.9$.}
	\label{Fig: Case II}
\end{figure}
%
%******************************************************************************************
%

%\vspace{0.2 in}
\noindent \textbf{Case III:} (High-dimensional  \textit{p}-refinement) We consider the exact solution of the form \begin{equation}
\label{test III}
u^{ext}=t^{p_1 + \alpha}\times \prod_{i=1}^{3} (1+x_i)^{p_{2i}} (1-x_i)^{p_{2i+1}}
\end{equation}
with singularity of order $\alpha = 10^{-4}$, where $p_1=3$, and $p_{2i}=p_{2i+1}=1$. Similar to previous cases, we set the number of temporal bases, $\mathcal{N} = 4$, and study convergence by uniformly increasing the number of spatial bases in all dimensions. Fig. \ref{Fig: Case III} shows the results for $(1+2)$-dimensional and $(1+3)$-dimensional problems with expansion order of $\mathcal{N} \times \mathcal{M}_1 \times \mathcal{M}_2$, and $\mathcal{N} \times \mathcal{M}_1 \times \mathcal{M}_2 \times \mathcal{M}_3$, respectively. Following Case I, the computed rate of convergence $\bar{r}_1=\bar{r}_2=\bar{r}_3=16.13$ in \eqref{test III} for $\alpha=10^{-4}$ is greater than the minimum regularity of the exact solution $r \approx 2.05$, which is in agreement with Theorem \ref{thmerr}.
%
%******************************************************************************************
%
\begin{figure}[h]
	\centering
	\begin{subfigure}{0.4\textwidth}
		\centering
		\includegraphics[width=1\linewidth]{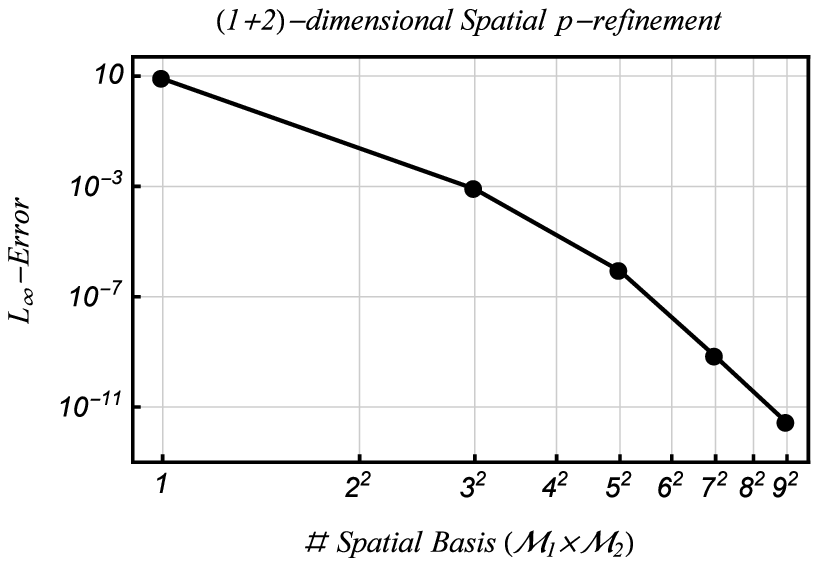}
		%		\caption{}
		%\label{fig:}
	\end{subfigure}
	\begin{subfigure}{0.4\textwidth}
		\centering
		\includegraphics[width=1\linewidth]{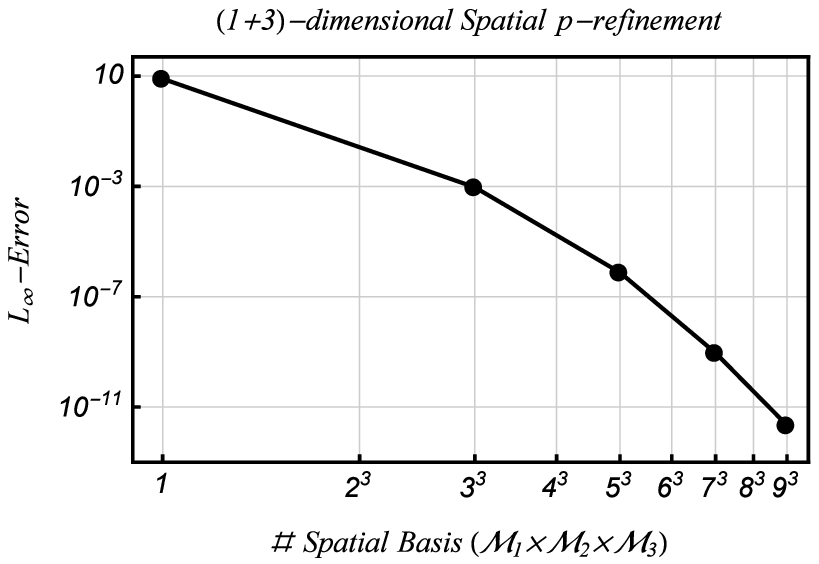}
		%		\caption{}
		%\label{fig:}
	\end{subfigure}
	\caption{Spatial \textit{p}-refinement for case III with singularity of order $\alpha = 10^{-4}$. (Left): $(1+2)$-dimensional, $p_1=3$, $p_{2i}=p_{2i+1}=1$, where the expansion order is $\mathcal{N} \times \mathcal{M}_1 \times \mathcal{M}_2$. (Left): $(1+3)$-dimensional, $p_1=3$, $p_{2i}=p_{2i+1}=1$, where the expansion order is $\mathcal{N} \times \mathcal{M}_1 \times \mathcal{M}_2 \times \mathcal{M}_3$. }
	\label{Fig: Case III}
\end{figure}
%
%******************************************************************************************
%

In addition to the convergence study, we examine the efficiency of the developed method and fast solver by comparing the CPU times for $(1+1)$-, $(1+2)$-, and $(1+3)$-dimensional space-time hypercube domains in case III. The computed CPU times are obtained on an INTEL(XEON E52670) processor of 2.5 GHZ, and reported in Table \ref{Table: Cpu Time}.
%%
%\begin{table}[h]
%	\centering
%	\captionsetup{justification=centering}
%	\caption{\label{Table: Cpu Time} CPU time, PG spectral method for fully distributed (1+d)-dimensional diffusion problems. $u^{ext}=t^{p_1 + \alpha}\times \prod_{i=1}^{3} (1+x_i)^{p_{2i}} (1-x_i)^{p_{2i+1}}$, where $\alpha = 10^{-4}$, $p_1 = 3$, and the expansion order is $4 \times 11 ^ d$.}
%%
%\scalebox{0.7}{
%\begin{tabular}
%	%
%	{cccc}
%	%
%	\multicolumn{4}{c}{$p_{2i}=p_{2i+1}=2$} \\ 	\hline \hline
%	%
%	\hspace*{1.0in}& d=1 & d=2 & d=3    \\ 	\hline
%	%
%	CPU Time $[$Sec$]$ & $1546.81$ & $1735.03$ & $2358.67$ \\ \hline
%	%
%	$\Vert e \Vert_{L^{\infty}(\Omega)}$ & $6.84 \times 10^{-12}$  & $4.45 \times 10^{-12}$ & $3.27 \times 10^{-12}$ \\ \hline
%	%
%\end{tabular}
%}
%%
%\scalebox{0.7}{
%\begin{tabular}
%	%
%	{cccc}
%	%
%%	\vspace{0.1 in}\\
%	\multicolumn{4}{c}{$p_{2i}=p_{2i+1}=3$} \\ 	\hline \hline 
%	%
%	\hspace*{1.0in}& d=1 & d=2 & d=3 \\	\hline
%	%
%	CPU Time $[$Sec$]$ & $1596.16$ & $1786.61$ & $2407.22$ \\ \hline
%	%
%	$\Vert e \Vert_{L^{\infty}(\Omega)}$ & $6.27 \times 10^{-12}$  & $3.86 \times 10^{-12}$ & $2.71 \times 10^{-12}$  \\ \hline
%	%
%\end{tabular}
%}
%%
%\end{table}

%
%%%%%%%%%%%%%%%%%%%%%%%%%%%%%%%%%%%%%%
%
\begin{table}[h]
	\centering
	\captionsetup{justification=centering}
	\caption{\label{Table: Cpu Time} CPU time, PG spectral method for fully distributed (1+d)-dimensional diffusion problems. $u^{ext}=t^{p_1 + \alpha}\times \prod_{i=1}^{3} (1+x_i)^{p_{2i}} (1-x_i)^{p_{2i+1}}$, where $\alpha = 10^{-4}$, $p_1 = 3$, and the expansion order is $4 \times 11 ^ d$.}
	\scalebox{0.7}{
		\begin{tabular}
			{l  c|c|c || c|c|c|}
			& \multicolumn{3}{c}{$p_{2i}=p_{2i+1}=2$} & \multicolumn{3}{c}{$p_{2i}=p_{2i+1}=3$}\\ 	\cline{2-7} 
			\multicolumn{1}{c|}{} & d=1 & d=2 & d=3 & d=1 & d=2 & d=3   \\  	\hline 
			\multicolumn{1}{|c||}{CPU Time $[$Sec$]$} & $1546.81$ & $1735.03$ & $2358.67$ & $1596.16$ & $1786.61$ & $2407.22$\\  \hline
			\multicolumn{1}{|c||}{$\Vert e \Vert_{L^{\infty}(\Omega)}$} & $6.84 \times 10^{-12}$  & $4.45 \times 10^{-12}$ & $3.27 \times 10^{-12}$ & $6.27 \times 10^{-12}$  & $3.86 \times 10^{-12}$ & $2.71 \times 10^{-12}$ \\ \hline
		\end{tabular}
	}
\end{table}
\section{Summary}
\label{Summary and Discussion}
%%%%%%%%%%%%%%%%%%%%%%%%%%%%%%%%%
%
We developed a unified Petrov-Galerkin spectral method for fully distributed-order PDEs with constant coefficients on a ($1+d$)-dimensional \textit{space-time} hypercube, subject to homogeneous Dirichlet initial/boundary conditions. We obtained the weak formulation of the problem, and proved the well-posedness by defining the proper underlying  \textit{distributed Sobolev} spaces and the associated norms. We then formulated the numerical scheme, exploiting Jacobi \textit{poly-fractonomial}s as temporal basis/test functions, and Legendre polynomials as spatial basis/test functions. In order to improve efficiency of the proposed method in higher-dimensions, we constructed a unified fast linear solver employing certain properties of the stiffness/mass matrices, which significantly reduced the computation time. Moreover, we proved stability of the developed scheme and carried out the error analysis. Finally, via several numerical test cases, we examined the practical performance of proposed method and illustrated the spectral accuracy.

%
%%%%%%%%%%%%%%%%%%%%%%%%%%%%%%%%%%
%\section*{Acknowledgement}
%%%%%%%%%%%%%%%%%%%%%%%%%%%%%%%%%%
%%
%
%
%
%%
%%%%%%%%%%%%%%%%%%%%%%%%%%%%%%%%%%%%%%%%%5
\section*{Appendix: Entries of Spatial Stiffness Matrix}
\label{Apx: entries of spatial stiffness matrix}
%%%%%%%%%%%%%%%%%%%%%%%%%%%%%%%%%%%%%%%%%
%
Here, we provide the computation of entries of the spatial stiffness matrix by performing an affine mapping $\vartheta$ from the standard domain $\mu^{stn}_j \in [-1,1]$ to $\mu_j \in [\mu_j^{max},\mu_j^{min}]$.
% the dis integral over the distribution in \eqref{Eq: infinit-dim PG method_1122} can be evaluated by employing a Gauss–Legendre quadrature rule.
%
%\subsection*{$\bullet$ \textbf{Proof of Lemma \ref{lemma31}}}
\begin{lem}
	\label{Thm: Spatial Stiffness Matrix}
	The total spatial stiffness matrix $S^{Tot}_{j}$ is symmetric and its entries can be exactly computed as:
	%
	%To figure out the total spatial stiffness matrix, $S_{\mu_1}$ and $S_{\nu_1}$ are evaluated here. As we know from sec 3,
	\begin{eqnarray}
	\label{Eq: total Lyapunov-2d}
	S^{\, {Tot}}_{j}=c_{l_j} \times S_{l}^{\varrho_j} + c_{r_j} \times S_{r}^{\varrho_j}-\kappa_{l_j} \times S_{l}^{\rho_j}-\kappa_{r_j} \times S_{r}^{\rho_j},
	\end{eqnarray}
	where $j=1,2,\cdots,d$.
	%
	%In this case, the spatial stiffness matrix $S_1^{Tot}$ is symmetric as well as $S_{\mu_1}$ and $S_{\nu_1}$. Moreover, its entries can be computed exactly by employing a Gauss-Jacobi (GLJ) rule with respect to the weight function $(1-\xi)^{-\mu}(1+\eta)^{-\mu}$, where $\xi \in [-1,1]$.
	%
\end{lem}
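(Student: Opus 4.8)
The plan is to prove both assertions---exact computability and symmetry---by transporting each spatial inner product to the reference interval and exploiting the reflection that interchanges the left- and right-sided fractional derivatives. First I would map $x_j\in[a_j,b_j]$ to $\xi\in[-1,1]$ through the affine map $\xi_j$, so that $\prescript{}{a_j}{\mathcal{D}}_{x_j}^{\mu_j}=\left(\tfrac{2}{b_j-a_j}\right)^{\mu_j}\prescript{RL}{-1}{\mathcal{D}}_{\xi}^{\mu_j}$, with the analogous scaling for $\prescript{}{x_j}{\mathcal{D}}_{b_j}^{\mu_j}$. Each entry of $S_l^{\varrho_j}$, $S_r^{\varrho_j}$, $S_l^{\rho_j}$, $S_r^{\rho_j}$ is then a fixed prefactor times a reference-domain integral against $\varrho_j$ or $\rho_j$ over the order $\mu_j$ (resp. $\nu_j$).

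Next I would substitute $\phi_{m_j}=\sigma_{m_j}(P_{m_j+1}-P_{m_j-1})$ and $\Phi_{k_j}=\widetilde{\sigma}_{k_j}(P_{k_j+1}-P_{k_j-1})$ and apply the explicit Riemann--Liouville formulas \eqref{Eq: 10} and \eqref{Eq: 11}, turning each Legendre term into a Jacobi \emph{poly-fractonomial}; for instance $\prescript{RL}{-1}{\mathcal{D}}_{\xi}^{\mu_j}P_n=\tfrac{\Gamma(n+1)}{\Gamma(n-\mu_j+1)}(1+\xi)^{-\mu_j}P_n^{\mu_j,-\mu_j}$, with the mirror identity for the right-sided operator. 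The reference integrals then take the form $\int_{-1}^1 (1+\xi)^{-\mu_j}P_a^{\mu_j,-\mu_j}(\xi)\,(1-\xi)^{-\mu_j}P_b^{-\mu_j,\mu_j}(\xi)\,d\xi$ against the combined weight $(1-\xi)^{-\mu_j}(1+\xi)^{-\mu_j}$, which I would evaluate in closed form as a ratio of Gamma functions, so that the spatial part of each entry is exact. The remaining integration over the derivative order is then carried out through the affine map $\vartheta$ onto $[-1,1]$ followed by the spectrally accurate quadrature already used for the temporal stiffness matrix, yielding \eqref{Eq: total Lyapunov-2d}.

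For the symmetry I would use the reflection $R:\xi\mapsto-\xi$. Because $P_n(-\xi)=(-1)^nP_n(\xi)$, both families are parity eigenfunctions, $R\phi_m=(-1)^{m+1}\phi_m$ and $R\Phi_k=(-1)^{k+1}\Phi_k$, and a change of variables in \eqref{eqRL} gives the intertwining relation $R\,\prescript{RL}{-1}{\mathcal{D}}_{\xi}^{\mu_j}f=\prescript{RL}{\xi}{\mathcal{D}}_{1}^{\mu_j}(Rf)$, so that reflection swaps the two one-sided operators. Applying $\xi\mapsto-\xi$ to the reference integral for the $(k,m)$ entry then produces a parity factor $(-1)^{m+k}$ together with the $(m,k)$ integral having the roles of basis and test interchanged; rewriting $\Phi=(\widetilde{\sigma}/\sigma)\phi$ contributes the ratio $\tfrac{\sigma_m\widetilde{\sigma}_k}{\widetilde{\sigma}_m\sigma_k}$. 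The explicit choices $\sigma_m=2+(-1)^m$ and $\widetilde{\sigma}_k=2(-1)^k+1$ give $\sigma_m/\widetilde{\sigma}_m=(-1)^m$, so this ratio equals $(-1)^{m+k}$ and cancels the reflection sign exactly, forcing the $(k,m)$ and $(m,k)$ entries to coincide. The identical computation applies to each of the four constituent matrices, so the fixed linear combination $S_j^{Tot}$ is symmetric.

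The main obstacle will be the exact evaluation of the weighted Jacobi integrals: the poly-fractonomials $P_a^{\mu_j,-\mu_j}$ and $P_b^{-\mu_j,\mu_j}$ are \emph{not} the orthogonal family for the weight $(1-\xi)^{-\mu_j}(1+\xi)^{-\mu_j}$, so orthogonality cannot be invoked directly and one must instead use connection or linearization identities (or the explicit integral evaluations of \cite{zayernouri2015unified,samiee2017Unified}) to collapse each integral to a single Gamma-function ratio. A secondary difficulty is the careful bookkeeping of the parity signs and the normalization factors through the reflection argument, which must be tracked precisely to confirm that symmetry holds exactly rather than merely up to sign.
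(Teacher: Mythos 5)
Your proposal follows essentially the same route as the paper's proof: affine mapping of both the spatial variable and the distribution-order variable to the reference interval, expansion of $\phi_m,\Phi_k$ in Legendre polynomials, application of \eqref{Eq: 10}--\eqref{Eq: 11}, quadrature over the order variable, and a parity argument in which the factor $(-1)^{m+k}$ coming from $P_n^{\alpha,\beta}(-\xi)=(-1)^nP_n^{\beta,\alpha}(\xi)$ is cancelled by the choice of $\sigma_m$ and $\widetilde{\sigma}_k$ (your reflection-operator phrasing and the paper's direct use of the Jacobi parity identity are the same computation). The one discrepancy is your hope that the weighted Jacobi integral collapses to a single Gamma-function ratio: it does not in general (as you yourself suspect, the two families are not mutually orthogonal for the weight $(1-\xi)^{-\mu}(1+\xi)^{-\mu}$), and the paper instead keeps $\int_{-1}^{1}(1-\xi_j^2)^{-\mu}P_r^{-\mu,\mu}P_n^{\mu,-\mu}\,d\xi_j$ as a polynomial integrated against a Jacobi weight---exactly computable by Gauss--Jacobi quadrature---while the integral over $\mu_j^{stn}$ is handled by Gauss--Legendre quadrature, which is where the claimed ``exact'' computation of \eqref{Eq: total Lyapunov-2d} actually resides.
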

\begin{proof}
	Regarding the definition of stiffness matrix, we have
	\begin{align}
	\{ S_{l}^{\varrho_j} \}_{r,n}
	&= 
	\int_{-1}^{1} \int_{\mu_j^{min}}^{\mu_j^{max}} \varrho_j(\mu_j^{})
	\prescript{}{-1}{\mathcal{D}}_{\xi_j}^{\mu_j^{}} \Big(\,\phi^{}_n  ( x_j ) \Big)
	\prescript{}{\xi_j}{\mathcal{D}}_{1}^{\mu_j^{}} \Big(\Phi^{}_r ( x_j ) \Big)
	\,
	d{x_j},
	\nonumber
	\\
	&=
	\beta_1 \int_{-1}^{1} \int_{-1}^{1} \varrho_j\big(\vartheta(\mu_j^{stn})\big) \prescript{}{-1}{\mathcal{D}}_{\xi_j}^{\mu_j^{stn}} \Big(\, P_{n+1}({\xi_j}) - P_{n-1}({\xi_j}) \Big)
	\nonumber
	\\
	& \qquad \qquad \quad \times \prescript{}{{\xi}_j}{\mathcal{D}}_{1}^{\mu_j^{stn}} \Big(\, P_{k+1}({\xi_j}) - P_{k-1}({\xi_j}) \Big)
	\,
	d{\xi_j} , 
	\nonumber
	\\
	&=
	\beta_1 \Big[\,\widetilde{S}^{\, \varrho_j}_{r+1,n+1}-\widetilde{S}_{r+1,n-1}^{\, \varrho_j}-\widetilde{S}_{r-1,n+1}^{\, \varrho_j}+\widetilde{S}_{r-1,n-1}^{\, \varrho_j}  \Big],   \quad \quad
	\end{align}
	where $\beta_1= {\widetilde{\sigma}}_r\,\sigma_n \, \Big(\frac{\mu_j^{max}-\mu_j^{min}}{2}\Big)$ and
	\begin{align}
	\widetilde{S}_{r,n}^{\varrho_j} &= \,\int_{-1}^{1}\int_{-1}^{1}
	\varrho_j\big(\vartheta(\mu_j^{stn})\big) \prescript{}{-1}{\mathcal{D}}_{\xi_j}^{\mu_j^{stn}} \Big(\, P_{n} (\xi_j) \Big)
	\prescript{}{\xi_j}{\mathcal{D}}_{1}^{\mu_j^{stn}} \Big(\, P_{r} (\xi_j) \Big)
	\,
	d{\xi_j} \, d\mu_j^{stn}
	\nonumber 
	\\
	&=
	\int_{-1}^{1}
	\varrho_j\big( \vartheta(\mu_j^{stn})\big)\, \frac{\Gamma(r+1)}{\Gamma(r-\mu_j^{stn}+1)}\,  \frac{\Gamma(n+1)}{\Gamma(n-\mu_j^{stn}+1)} \, 
	\nonumber \\ 
	& \, \times \int_{-1}^{1} {(1-\xi_j^2)}^{-\mu_j^{stn}} \, P^{ -\mu_j^{stn},\mu_j^{stn}}_{r} \, P^{ \mu_j^{stn},-\mu_j^{stn}}_{n} d{\xi_j}\, d\mu_j^{stn}.
	\nonumber
	\end{align}
	$\widetilde{S}_{r,n}^{\varrho_j}$ can be computed accurately using Gauss-Legendre (GL) quadrature rules as
	\begin{eqnarray}
	\nonumber
	\widetilde{S}_{r,n}^{\varrho_j^{stn}}
	&=&
	\sum_{q=1}^{Q} \frac{\Gamma(r+1)}{\Gamma(r-\mu_j^{stn}|_{q}+1)} \frac{\Gamma(n+1)}{\Gamma(n-\mu_j^{stn}|_{q}+1)} \varrho_j|_{q}\,
	w_q \,\times
	\nonumber
	\\
	&& \int_{-1}^{1} {(1-\xi_j^2)}^{-\mu_j^{stn}|_{q}} \, P^{ -\mu_j^{stn}|_{q},\mu_j^{stn}|_{q}}_{r}(\xi_j) \, P^{ \mu_j^{stn}|_{q},-\mu_j^{stn}|_{q}}_{n}(\xi_j) d{\xi_j}, \quad
	\end{eqnarray}
	in which $\mathcal{Q} \ge \mathcal{M}_j +2 $ represents the minimum number of GL quadrature points $\{\mu_j^{stn}|_{q}\}_{q=1}^{\mathcal{Q}}$ for \textit{exact} quadrature, and $\{w_q\}_{q=1}^{Q}$ are the corresponding quadrature weights. 
	Exploiting the property of the Jacobi polynomials where $P^{\alpha, \beta}_n(-\xi_j) = (-1)^n P^{ \beta,\alpha}_n(\xi_j)$, we have $\widetilde{S}_{r,n}^{\, \varrho_j^{stn}}=(-1)^{(r+n)}\,\widetilde{S}_{n,r}^{\, \varrho_j^{stn}}$. Following \cite{samiee2017Unified}, ${\widetilde{\sigma}}_r$ and $\sigma_n$ are chosen such that $(-1)^{(n+r)}$ is canceled. Accordingly, $\{ S_{l}^{\varrho_j}\}_{n,r}=\{S_{l}^{\varrho_j}\}_{r,n}=\{ S_{r}^{\varrho_j}\}_{r,n}=\{S_{r}^{\varrho_j}\}_{r,n}$ due to the symmetry of $ S_{l}^{\varrho_j}$ and $S_{r}^{\varrho_j}$. Similarly, we get $\{S_{l}^{\rho_j}\}_{n,r}=\{ S_{l}^{\rho_j}\}_{r,n}=\{ S_{r}^{\rho_j}\}_{n,r}=\{ S_{r}^{\rho_j}\}_{r,n} $. Eventually, we conclude that the stiffness matrix $S^{\, \varrho_j}_{l}$, $S^{\, \varrho_j}_{r}$, $S^{\, \rho_j}_{l}$, $S^{\, \rho_j}_{r}$, and thereby $\{S^{Tot}_{j}\}_{n,r}$ as the sum of symmetric matrices are symmetric.
\end{proof}

%

%%%%%%%%%%%%%%%%%%%%%%%%%%%%%%%%%%%%%%%%%%%%%%%%%%%%%%%%%%%%%%%%%%%%%%%%%%%%%%%%%%%%%%%%%%%%%%%%%%%%%%%%%%%%%%%%%%%%%%%%%%%%%%%%%%%%%%%%
%%%%%%%%%%%%%%%%%%%%%%%%%%%%%%%%%%%%%%%%%%%%%%%%%%%%%%%%%%%%%%%%%%%%%%%%%%%%%%%%%%%%%%%%%%%%%%%%%%%%%%%%%%%%%%%%%%%%%%%%%%%%%%%%%%%%%%%%

\newpage
\bibliographystyle{siam}
\bibliography{RFSLP_Refs2}

\end{document}